\newtheorem{prop}{Proposition}[section]
\newtheorem{thm}[prop]{Theorem}
\newtheorem{lem}[prop]{Lemma}
\newtheorem{cor}[prop]{Corollary}
\newtheorem{defi}[prop]{Definition}
\theoremstyle{definition}
\newtheorem{rem}[prop]{Remark}
\newcommand{\A}{\mathbb A}
\newcommand{\C}{\mathbb C}
\newcommand{\D}{\mathbb D}
\newcommand{\Hp}{\mathbb H}
\newcommand{\N}{\mathbb N}
\newcommand{\R}{\mathbb R}
\newcommand{\RR}{\mathcal R}
\renewcommand{\SS}{\mathcal S}
\newcommand{\lra}{\longrightarrow}
\newcommand{\ra}{\rightarrow}
\newcommand{\set}[1]{\{#1\}}
\newcommand{\bigset}[1]{\big\{#1\big\}}
\newcommand{\suchthatb}{\;\big|\;}
\newcommand{\define}{\;{\rm :=}\;}
\newcommand{\enifed}{\;{\rm =:}\;}
\newcommand{\abs}[1]{|#1|}
\newcommand{\Babs}[1]{\Big|#1\Big|}
\newcommand{\restrict}{\,|}
\newcommand{\im}{\Im m}
\DeclareMathOperator{\bd}{\mathrm{\!\!\restrict_{\partial\D}}}
\DeclareMathOperator{\crt}{\mathrm{Cr}}
\DeclareMathOperator{\crit}{\mathrm{Crit}}
\DeclareMathOperator{\inj}{\mathrm{Inj}}
\DeclareMathOperator{\inter}{\mathrm{inter}}
\DeclareMathOperator{\mix}{\mathrm{mix}}
\DeclareMathOperator{\virt}{\mathrm{virt}}
\DeclareMathOperator{\pr}{\mathrm{proj}}
\DeclareMathOperator{\id}{\mathrm{id}}
\DeclareMathOperator{\gl}{\mathrm{GL}}
\DeclareMathOperator{\sign}{\mathrm{sign}}
\DeclareMathOperator{\Hom}{\mathrm{Hom}}
\DeclareMathOperator{\Ed}{\mathrm{End}}
\DeclareMathOperator{\acc}{\mathrm{Acc}}
\DeclareMathOperator{\Image}{\mathrm{Im}}
\begin{document}

\author{Kai Zehmisch}
\address{Mathematisches Institut, Universit\"at zu K\"oln,
  Weyertal 86--90, 50931 K\"oln, Germany}
\email{kai.zehmisch@math.uni-koeln.de}

\title[Annulus property]{
  The annulus property of simple holomorphic discs
}

\date{September 19, 2011.}

\begin{abstract}
  We show 
  that any simple holomorphic disc admits the annulus property,
  i.e.\ each interior point is surrounded by an arbitrary small annulus
  consisting entirely of injective points.
  As an application we show
  that interior singularities of holomorphic discs can be resolved
  after slight perturbation of the almost complex structure.
  Moreover,
  for boundary points the analogue notion,
  the half-annulus property,
  is introduced
  and studied in detail.
\end{abstract}

\maketitle


\section{Introduction\label{section:intro}}

In this article we consider holomorphic discs as Gromov \cite{gro85} introduced to symplectic geometry.
These are smooth (up to the boundary) maps $u$ defined on the closed unit disc $\D\subset\C$
which take values in an almost complex manifold $(M,J)$, map the boundary circle $\partial\D$
into a (maximally) totally real submanifold $L\subset M$, and solve the (non-linear)
Cauchy-Riemann equation
\[
\partial_su+J(u)\partial_tu=0,
\qquad
s+it\in\D.
\]
In order to make those discs applicable to symplectic geometry
not only certain compatibility conditions with the symplectic (or contact) structure are useful.
To ensure that the solution space of holomorphic discs has a meaningful structure
(e.g.\ the structure of a smooth finite-dimensional manifold)
the self-intersection behaviour
of a single solution should not be too exceptional.
If for example the solutions foliate the symplectic manifold
(as in case one talks about fillings with holomorphic discs, see Eliashberg \cite{elia90})
this has strong implications for the topology of the manifold.

As it is well-known (cf.\ the book of McDuff and Salamon \cite{mcdsal04})
the existence of a single {\bf injective point},
i.e.\ an immersed point which is not a double point,
is enough to show that the moduli space of solutions
is a smooth manifold,
provided the almost complex structure is perturbed suitably.
To find examples of {\bf somewhere injective} holomorphic discs
(i.e.\ discs which admit an injective point)
Lazzarini \cite{lazz00} found a method
to produce those out of a given holomorphic disc $u$
just by restricting the map $u$ to suitable subsets of $\D$.

Our interest lies in a closer understanding of the structure of 
{\bf simple} holomorphic discs,
discs with a dense subset of injective points.
For a given point we ask for a sufficiently small neighbourhood
that does not have too many double points.
This is the content of this work.
The central issue in doing so
(and not only for simple holomorphic discs
as e.g.\ studied by Lazzarini \cite{lazz00}, Kwon-Oh \cite{kwoh00}, and Oh \cite{oh97})
is that double points might accumulate not only in the interior,
but also at the boundary.
The first case is obstructed by the existence
of a dense set of injective points, i.e.\ by simplicity.
This is due to Micallef and White \cite{micwh95}.
But on the boundary the situation is completely different
as we shall explain before we come to our main results.

By the work of Micallef-White \cite{micwh95}
the intersection behaviour in the interior is understood
and a unique continuation principle is valid (well-known in complex analysis).
For boundary intersections this holds only under extra conditions:
Consider the local situation of two holomorphic maps
$z\mapsto z$, $z\mapsto -z$ both defined on the closed upper half plane $\Hp$.
We see that the images only have the real line in common
opposite to plenty of intersection points.
This means that the unique continuation principle is violated.
Globally this is also the case as the map $(\D,\partial\D)\ra (\C P^1,\R P^1)$
induced by $\Hp\ni z\mapsto z^2\in\C$ shows.
But if one can exclude this self-matching phenomenon
a relative version of the similarity principle
due to Carleman \cite{car39} can be applied,
and hence contradicts simplicity of the holomorphic disc.
We remark
that this relative Carleman similarity principle
for certain technical reasons only works
provided the boundary accumulation points are immersed.

In the situation where mixed intersections appear,
which are those where the boundary circle intersects the disc at the interior,
no analog of a unique continuation principle is known.
Moreover, a formulation of such a theorem for an arbitrary holomorphic disc map $u$
should involve the structure of the set $u^{-1}\big(u(\partial\D)\big)$,
the so-called {\bf net},
as the example $\Hp\ni z\mapsto z^3\in\C$ shows.
The regions between the $e^{ik\pi/3}\R^+$-axes, $k=0,1,2,3$, are overlapping and self-matching, respectively.
But in general the net has a very rich structure
which is still not fully understood,
c.f.\ \cite{kwoh00}.
But if one is only interested in subquestions
concerning the local behaviour of holomorphic discs
one can avoid those problems as done e.g.\ in \cite{lazz00} by Lazzarini,
where he showed how to reduce a given holomorphic disc to obtain a simple one.

Finally we point out
that there a geometric situations
in which the holomorphic discs behave particularly nice.
One class of examples are almost complex manifolds
which allow a Schwarz reflection principle.
This is the case for a real analytic boundary condition $L$
and an almost complex structure integrable near $L$, see \cite{pan94},
or if one can find an anti-holomorphic involution with a fix-point set containing $L$, c.f.\ \cite{frsch05}.
After an extension by reflection
all self-intersection points lie in the interior
and the results of Micallef and White \cite{micwh95}
can be applied.
Another situation where no mixed intersections can appear are holomorphic discs inside a
strictly pseudo-convex domain $\Omega$ such that $L\subset\partial\Omega$,
c.f.\ Eliashberg \cite{elia90}.
But in this work no such restrictions to the almost complex structure is made
and the aim is to prove results in full generality.
In other words, we have to deal with all kinds of double points.

Our first result concerns the structure of injective points near a given interior point.
We say a holomorphic disc has the {\bf annulus property}
if around any interior point one can find
an arbitrary small annulus consisting entirely of injective points.
For a precise definition we refer to Definition \ref{def:annulusprop}.

\begin{thm}
  \label{annulusintro}
  A holomorphic disc has the annulus property
  if and only if the disc is simple.
\end{thm}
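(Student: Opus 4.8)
The plan is to prove the two implications separately, with essentially all of the work going into the direction \emph{simple $\Rightarrow$ annulus property}. The direction \emph{annulus property $\Rightarrow$ simple} is immediate from the definitions: I would fix an interior point $z_0$, observe that the annulus property produces annuli of injective points with radius tending to zero about $z_0$, so that injective points accumulate at $z_0$. Since $z_0$ is arbitrary, the injective points are dense in the interior of $\D$, and hence, the interior being dense in $\D$, dense in all of $\D$. That is precisely simplicity.

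For the converse I would fix an interior point $z_0$, write $p\define u(z_0)$, and examine the \textbf{bad set} of non-injective points near $z_0$, namely the union of the critical set $\crit(u)$ with the set $\Delta$ of double points (those $z$ whose image $u(z)$ is also attained at some $z'\neq z$). The key reduction is that if this bad set is finite in some punctured disc about $z_0$, then the annulus property follows at once: finitely many points lie on only finitely many circles $\set{\abs{z-z_0}=r}$, so any radius interval $(r_1,r_2)$ avoiding those values yields a round annulus $\set{r_1<\abs{z-z_0}<r_2}$ of injective points, of arbitrarily small outer radius. Thus the entire problem is to show that bad points cannot accumulate at $z_0$.

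To do this I would argue by contradiction and lean on the interior theory of Micallef--White \cite{micwh95}. That critical points cannot accumulate is the standard fact that a simple disc has isolated interior critical points. For $\Delta$, suppose $z_n\to z_0$ with $u(z_n)=u(z_n')$ and $z_n'\neq z_n$; after passing to a subsequence $z_n'\to w_0$, so $u(w_0)=p$. If $w_0\neq z_0$, the local branches of $u$ through $z_0$ and through $w_0$ must be distinct, since coincident branches would, by unique continuation, force $u$ to be multiply covered and hence not simple; but then the intersection results of \cite{micwh95} show that two distinct branches meet in a discrete set, contradicting $u(z_n)=u(z_n')$ with both sequences converging. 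If instead $w_0=z_0$, the pairs $z_n,z_n'$ record local self-intersections accumulating at $z_0$, which the local normal form of \cite{micwh95}---a simple branch being locally injective off its isolated branch point---excludes. Either way $\Delta$ stays away from $z_0$.

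I expect this last step to be the main obstacle: one must apply the Micallef--White local model sharply enough to extract both local injectivity of a single branch and discreteness of the intersection of distinct branches, and to pair this with the compactness argument that pins down the limit $w_0$ of the far preimages $z_n'$. Once accumulation is ruled out, the bad set is locally finite in the interior, and the desired annulus is obtained by the radius-slipping argument of the second paragraph.
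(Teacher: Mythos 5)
There is a genuine gap, and it sits exactly at the point your last paragraph flags as ``the main obstacle.'' When you pass to a subsequence so that the partners $z_n'$ converge to some $w_0$ with $u(w_0)=p$, you treat only the cases $w_0\neq z_0$ and $w_0=z_0$ inside the open disc, and in both you invoke Micallef--White. But Micallef--White (Theorem \ref{mwmagerthm}) controls only $S(u)\cap\big(B_1(0)\times B_1(0)\big)$: it says nothing when $w_0\in\partial\D$. For a disc with totally real boundary condition the partners of interior points near $z_0$ can perfectly well accumulate at the boundary circle --- these are the \emph{mixed intersections} of Section \ref{section:mixed} --- and no unique continuation principle is available there (recall the model $z\mapsto z^2$ on the upper half-plane). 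This is the central difficulty of the theorem, and the paper spends Sections \ref{sec:boundint} and \ref{section:mixed} on it: one reparametrises the two sides of a putative arc of accumulation points as holomorphic half-discs, defines the sign of the boundary intersection (Lemma \ref{signofdelta}), and uses the relative Carleman similarity principle (Lemma \ref{isolembb}) to show that a positive sign forces overlapping branches and hence contradicts simplicity (Lemma \ref{noembeddingintor1}). None of this is replaceable by the interior local models of \cite{micwh95}.

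A second, related problem is your reduction itself: you assume that once accumulation at $z_0$ is excluded the bad set is \emph{finite} in a punctured disc about $z_0$, so that a round annulus can be slipped between the bad radii. Even with the full boundary machinery the paper does not obtain finiteness: the set $S_1$ of non-injective points in the reference annulus $\A$ is shown only to be compact, nowhere dense, and to contain no embedded arc (it can a priori be Cantor-like; cf.\ the remark closing Section \ref{section:mixed} that $R_1$ is merely totally path disconnected and of measure zero). Consequently the annulus of Definition \ref{def:annulusprop} cannot be taken round; the paper instead covers $S_1$ by finitely many small discs whose boundary circles avoid $S_1$, finds an embedded circle of winding number $1$ in the complement, and takes a tubular neighbourhood. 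So both the non-accumulation step and the subsequent construction need to be reworked along these lines.
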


The annulus property allows one to perform
local constructions with a given holomorphic disc.
Of primary interest (in particular in four-dimensional topology)
is the existence of immersed holomorphic discs.

\begin{cor}
  \label{mcdnonlin}
  For any simple $J$-holomorphic disc $u:(\D,\partial\D)\ra(M,L)$
  there exist a smooth almost complex structure $\tilde{J}$
  (still making $L$ totally real)
  and a simple $\tilde{J}$-holomorphic disc $\tilde{u}:(\D,\partial\D)\ra(M,L)$
  such that
  \begin{itemize}
  \item [(i)]
    $\tilde{u}$ has no interior critical point,
    i.e.\ $\tilde{u}\restrict_{B_1(0)}$ is an immersion,
  \item [(ii)]
    $\tilde{u}$ is arbitrary $C^2$-close to $u$, and
  \item [(iii)]
    $\tilde{J}$ is arbitrary $C^1$-close to $J$.
  \end{itemize}
\end{cor}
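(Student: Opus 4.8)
The plan is to resolve the finitely- or countably-many interior critical points of $u$ one at a time, by a compactly supported surgery that trades each branch point for an embedded immersed disc, adjusting $J$ only on an isolated neighbourhood of the corresponding singular value. The whole construction is powered by Theorem \ref{annulusintro}: as $u$ is simple, it enjoys the annulus property. By the interior structure theory of Micallef--White \cite{micwh95} the set $\crit(u)\cap B_1(0)$ is discrete, accumulating at most on $\partial\D$, and at each such $z_0$ the map has finite branching order $k$ with nonzero leading coefficient, $u(z)-u(z_0)=a\,(z-z_0)^k+o(|z-z_0|^k)$. Around $z_0$ I would first invoke Theorem \ref{annulusintro} to pick an arbitrarily thin annulus $A=\{\rho<|z-z_0|<\rho'\}$ of injective points, bounding a disc $D=\{|z-z_0|\le\rho'\}$ whose only critical point is $z_0$. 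Since every point of $A$ is injective, $u|_{\overline D}$ embeds the collar $\overline D\setminus\{|z-z_0|<\rho\}$ and its image avoids $u(\D\setminus D)$; hence there is an \emph{isolated} neighbourhood $N\subset M$ of $u(\overline D)$ with $u^{-1}(N)\subset D$. After shrinking, the neighbourhoods for distinct critical points are taken pairwise disjoint and disjoint from $L$ (the $z_0$ being interior).

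Next I fix a smooth chart of $M$ around $u(z_0)$ and a complex direction $c$ transverse to the branch, i.e.\ with $c\notin\C a$; such a $c$ exists once $\dim_\R M\ge4$ (for $\dim_\R M=2$ simplicity already forces $u|_{B_1(0)}$ to be an immersion, and there is nothing to prove). I set $\tilde u=u+\beta(|z-z_0|)\,\epsilon\,c\,(z-z_0)$, where $\beta$ is a cut-off equal to $1$ near $z_0$ and to $0$ outside radius $\rho$, and $\epsilon>0$ is small. Near $z_0$ the derivative is $k\,a\,(z-z_0)^{k-1}+\epsilon c+\dots$, whose $c$-component is the nonvanishing $\epsilon$, so $\tilde u$ is an immersion there; on the transition annulus $du$ is bounded away from $0$, so for small $\epsilon$ no new critical point appears; and $\tilde u=u$ on $A$ and outside $D$. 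The same transverse term makes the $c$-component of $\tilde u-u(z_0)$ equal $\epsilon(z-z_0)+o(|z-z_0|^k)$, hence $\tilde u$ is injective near $z_0$, so $\tilde u|_{\overline D}$ is an embedding into $N$. Choosing $\epsilon$ small keeps $\tilde u$ arbitrarily $C^2$-close to $u$; performing these modifications for all critical points with a locally finite, summable choice of parameters yields a smooth $\tilde u$ that is $C^2$-close to $u$ up to $\partial\D$.

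It remains to build $\tilde J$ making $\tilde u$ holomorphic. On the embedded disc $S=\tilde u(\overline D)\subset N$ the equation $\tilde J\circ d\tilde u=d\tilde u\circ i$ prescribes $\tilde J$ only on the tangent planes $TS$, as $d\tilde u\circ i\circ(d\tilde u)^{-1}$, an honest complex structure on each plane. Where $\tilde u=u$ this equals $J$, because $u$ is $J$-holomorphic; elsewhere it differs from $J$ by $O(\epsilon)$, as the perturbation is $C^1$-small. Thus the prescribed field is $C^1$-close to $J|_S$ and agrees with $J$ along $\partial S$, so it extends to an almost complex structure on $N$ that is $C^1$-close to $J$ and equals $J$ near $\partial N$. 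Since the $N$'s are disjoint from one another and from $L$, these local structures patch to a global $\tilde J$, still making $L$ totally real, for which $\tilde u$ is $\tilde J$-holomorphic. Finally $\tilde u$ is simple: outside the discs $D$ it coincides with $u$ and, by isolation of the $N$'s, keeps the injective points of $u$, which are dense there, while inside each $D$ the embedded piece contributes only injective points.

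The heart of the matter, and the step I expect to be most delicate, is the \emph{consistent} construction of $\tilde J$. Prescribing $\tilde J$ along the image so as to make $\tilde u$ holomorphic is legitimate only because the altered piece is embedded and sits inside a neighbourhood that no other part of the disc enters; without this isolation, two sheets through a common value would impose incompatible complex structures, or the extension would fail to stay $C^1$-close to $J$. Securing exactly this isolation is the role of the annulus property, and it is what couples the control of $\|\tilde u-u\|_{C^2}$ with that of $\|\tilde J-J\|_{C^1}$. The remaining technical care concerns the possible accumulation of interior critical points at $\partial\D$, which is absorbed by the locally finite, summable family of perturbations, so that smoothness up to the boundary and the total reality of $L$ are retained.
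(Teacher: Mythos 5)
Your overall strategy --- isolate each interior critical point inside an annulus of injective points, replace the branched germ by an immersed disc, and redefine the almost complex structure along the new image --- is in the spirit of the paper's argument, which however delegates the local step to McDuff's approximation theorem \cite[Theorem 4.1.1]{mcd94} and the cut-and-paste procedure of \cite[Lemma 4.3]{mcd91}, \cite[Corollary 4.2.1]{mcd94}, and uses the annulus property only to extend $\tilde{J}$ over the annuli where the interpolation takes place. (A small preliminary point: by Proposition \ref{pointsfinite} the set $\crit(u)$ is finite, so there is no accumulation of critical points at $\partial\D$ and no need for a ``locally finite, summable'' family of perturbations.)

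The genuine gap is your isolation claim: from the fact that the annulus $A$ consists of injective points you infer the existence of a neighbourhood $N$ of $u(\overline{D})$ with $u^{-1}(N)\subset D$. This does not follow and is false in general. Theorem \ref{annulusintro} gives $u^{-1}\big(u(A_{\varepsilon,z_0})\big)=A_{\varepsilon,z_0}$ for the \emph{annulus} only; it says nothing about the disc the annulus bounds. The image $u(A)$ has codimension $2n-2\geq 2$ in $M$ and separates nothing, so $u(\D\setminus D)$ may enter every neighbourhood of $u\big(B_{\rho}(z_0)\big)$: the critical value $u(z_0)$ itself may be a multiple point (the paper allows $z_0\in\crt(u)$, i.e.\ $\crit(u)\cap S_1(u)\neq\emptyset$), and the mixed self-intersections $S(u)\cap\big(B_{\rho}(z_0)\times\partial\D\big)$ can form exactly the complicated sets analysed in Section \ref{section:mixed}. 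Without isolation, prescribing $\tilde{J}$ on $TS$ as $d\tilde{u}\circ i\circ d\tilde{u}^{-1}$ and extending over $N$ conflicts with the other sheets of $u$ passing through $N$, which must remain $\tilde{J}$-holomorphic --- precisely the failure mode you name at the end but do not rule out. A second, quantitative gap: the $C^1$-closeness of the prescribed $\tilde{J}$ to $J$ is asserted rather than proved; near the former critical point one has $\|d\tilde{u}^{-1}\|\sim\varepsilon^{-1}$, and differentiating $d\tilde{u}\circ i\circ d\tilde{u}^{-1}$ along $S$ yields terms that are merely bounded, not small, as $\varepsilon\to 0$. Both difficulties are what the citation of \cite[Theorem 4.1.1]{mcd94} is there to resolve; the annulus property is needed (and suffices) only for the transition region, whose image genuinely is isolated.
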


To obtain the result one just removes all the finitely many interior critical points of $u$.
By a theorem of McDuff (see \cite[Theorem 4.1.1]{mcd94}),
about small discs
near each interior critical point
the $J$-holomorphic disc $u$ can be locally approximated
by $J$-holomorphic immersions.
Now using small annuli around
the interior critical points as in Theorem \ref{annulusintro}
the cut and paste procedure from \cite[Lemma 4.3]{mcd91} and \cite[Corollary 4.2.1]{mcd94}
yields the claim.
As the construction shows,
$\tilde{u}$ coincides with $u$
and $\tilde{J}$ coincides with $J$
away from the neighbourhoods,
where the local perturbations take place.
The annulus property thereby is used in a crucial way
in order to extend the almost complex structure over the annuli to obtain a smooth $\tilde{J}$.

For a similar formulation of the annulus property along the boundary
one has to exclude the case
that the holomorphic disc-map takes the same values along two disjoint boundary segments
(recall the example $z\mapsto z^2$ from above).
We will say the holomorphic disc is {\bf not self-matching},
see Definition \ref{def:selfm} for a precise formulation.
We say a holomorphic disc has the {\bf half-annulus property}
if around any boundary point one can find
an arbitrary small half-annulus consisting entirely of injective points,
see  Definition \ref{def:halfannulusprop}.

\begin{thm}
  \label{halfannulusintro}
  A holomorphic disc has the half-annulus property
  if and only if the disc is simple and not self-matching.
\end{thm}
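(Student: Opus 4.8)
The plan is to prove both implications, the forward one being comparatively soft and the reverse one carrying the analytic content. Throughout I write $p=u(\zeta_0)$ for $\zeta_0\in\partial\D$ and call a point of $\D$ \emph{bad} if it fails to be injective, i.e.\ if it is a critical point of $u$ or a double point, the latter of three types: interior-interior, boundary-boundary, or \emph{mixed} (a boundary point whose image is also attained at an interior point). The key reduction, exactly as in the proof of Theorem~\ref{annulusintro}, is that the half-annulus property at $\zeta_0$ is equivalent to the statement that the bad points do not accumulate at $\zeta_0$: once a punctured half-neighbourhood of $\zeta_0$ is free of bad points every sufficiently small half-annulus around $\zeta_0$ consists of injective points, while conversely an accumulation of bad points meets every half-annulus.

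For the forward direction, the half-annulus property produces injective points, so $u$ is somewhere injective and hence simple by the standard structure theory of injective points. Moreover, were $u$ self-matching, there would be disjoint boundary arcs $I_1,I_2$ with $u|_{I_1}$ and $u|_{I_2}$ sharing all their values; then every $\zeta_0$ interior to $I_1$ would have an entire boundary neighbourhood of double points, so no half-annulus around $\zeta_0$ could be injective. Hence $u$ is also not self-matching.

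For the reverse direction assume $u$ simple and not self-matching and fix $\zeta_0\in\partial\D$; I must show the bad points do not accumulate at $\zeta_0$. Let $\zeta_n\to\zeta_0$ be bad with doubling partners $\zeta_n'$, and pass to a subsequence so that $\zeta_n'\to\zeta_*$; by continuity $u(\zeta_*)=p$. Since critical points are isolated I may assume each $\zeta_n$ is a double point, and using the local normal form at an isolated critical point together with simplicity I may assume $\zeta_0$ and $\zeta_*$ are immersed. If $\zeta_*\in\partial\D$ (possibly $\zeta_*=\zeta_0$), the boundary germs of $u$ at $\zeta_0$ and at $\zeta_*$ take matching values along a set accumulating at immersed points, so the relative Carleman similarity principle \cite{car39} forces them to agree along a whole boundary sub-arc; this is precisely self-matching, which is excluded. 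The only remaining possibility is the genuinely mixed one, in which $\zeta_*$ lies in the interior.

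The mixed case is the main obstacle, since, as emphasised in the introduction, no unique continuation principle is available there. My plan is to exploit that the boundary values of $u$ lie in the totally real $L$: from $u(\zeta_n')=u(\zeta_n)\in L$ the interior points $\zeta_n'\to\zeta_*$ all lie in $u^{-1}(L)$. As $\zeta_*$ is immersed, $u$ maps a neighbourhood of $\zeta_*$ diffeomorphically onto a $J$-holomorphic surface $S$ with $p\in S\cap L$; because $T_pS$ is a complex line while $T_pL$ is totally real, $S\cap L$ is a one-dimensional arc through $p$, and pulling back shows that $u^{-1}(L)$ is a single embedded arc $\gamma$ through $\zeta_*$ on which the $\zeta_n'$ eventually lie. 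Thus $\gamma$ behaves like an interior ``boundary arc'' mapping into $L$ whose image matches that of the genuine boundary arc at $\zeta_0$ along an accumulating set. Applying the relative Carleman similarity principle to these two immersed branches into $L$ yields matching along a whole sub-arc, and a Schwarz-type reflection across $\gamma$ then identifies a neighbourhood of $\zeta_*$ with the reflected boundary branch at $\zeta_0$; this forces either self-matching or a positive-dimensional set of double points, in both cases contradicting the hypotheses. Hence no bad points accumulate at $\zeta_0$, so $u$ is injective on a punctured half-neighbourhood of $\zeta_0$ and every sufficiently small half-annulus around $\zeta_0$ consists entirely of injective points. The delicate steps I expect to fight with are the regularity and transversality of $u^{-1}(L)$ at $\zeta_*$---guaranteeing that it is a single embedded arc rather than a singular or higher-dimensional set---and arranging the reflection so that the relative similarity principle genuinely applies; this is the analytic core to which the introduction's discussion of the net points.
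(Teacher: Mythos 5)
Your argument rests on a reduction that is false: you claim the half-annulus property at $\zeta_0$ is equivalent to the non-accumulation of non-injective points at $\zeta_0$, and then devote the reverse implication to proving non-accumulation. But non-injective points \emph{can} accumulate at boundary points (and at interior points) of a simple, not self-matching disc: Theorem \ref{mwmagerthm} only makes $S(u)$ discrete over $B_1(0)\times B_1(0)$, so pairs $(\zeta_n,\zeta_n')$ with $u(\zeta_n)=u(\zeta_n')$ may accumulate at any $(\zeta_0,\zeta_*)$ having at least one coordinate on $\partial\D$ --- in particular mixed intersections, and interior double points whose partners escape to $\partial\D$, are not excluded by any of your unique-continuation arguments. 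The half-annulus property does not require a punctured half-neighbourhood free of double points (the annulus $A_{\varepsilon,z_0}$ is any embedded annulus of winding number $1$, not a round one, and bad points may sit inside the hole it encircles or in the gaps it avoids). What the paper actually proves is much weaker and is exactly what makes the theorem hard: the relevant set $S_1$ of non-injective points in an annular region is compact, nowhere dense, and contains no embedded arc (Lemma \ref{noembeddingintor1}, proved via the sign of boundary intersections and the relative Carleman similarity principle of Lemma \ref{isolembb}); one then needs the covering argument of Lemmas \ref{disjboundlem}--\ref{disjboundlem2} and part II of the proof of Theorem \ref{annulusproperty} to thread an embedded circle through the complement. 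Your proposal replaces this entire construction with an equivalence that does not hold, so the reverse implication collapses even if every Carleman-type step you sketch were carried out.

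There are two further gaps. In the forward direction you deduce simplicity from ``somewhere injective,'' which is false (the paper's own example $z\mapsto z^3$ is somewhere injective but not simple); the correct deduction, that strong $\partial$-simplicity implies simplicity, is Proposition \ref{frombdrtoint} and needs a genuine argument (maximal overlapping open sets via Zorn's lemma, a case analysis on where their boundaries lie, and Lemmas \ref{signofdelta} and \ref{isolembb}). In the mixed case your idea that $u^{-1}(L)$ is locally an embedded arc near an immersed interior point is essentially the paper's Lemma \ref{typetwo}, but the conclusion you want is not reached by ``Schwarz-type reflection''; the paper instead shows (Lemmas \ref{isolemib} and \ref{okbranches}) that a boundary arc $I$ with $u(I)\subset u\big(B_1(0)\big)$ already contradicts simplicity, by applying the relative Carleman similarity principle to the components of $u$ transverse to the flattened interior branch. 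You correctly identify where the difficulty lies, but the steps you flag as ``delicate'' are precisely the ones that carry the proof, and they are not supplied.
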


We remark that both theorems above are crucial in developing a higher-order-intersection theory
for holomorphic discs.
This is expected to generalise the filling-with-holomorphic-discs method considerably, see \cite{zehm08}.
For a generic almost complex structure the moduli space of somewhere injective holomorphic discs
(representing a certain relative homotopy class)
with e.g.\ a boundary singularity, a tangent self-intersection, or
a more general constraint on their jet-prolongations is a smooth manifolds
of the expected dimension.
By an argument due to McDuff-Salamon (see \cite[Lemma 3.3.4]{mcdsal04})
the (half-)annulus property is used to perform a linearised version of the argument of Corollary \ref{mcdnonlin}
to get the manifold structure.
We point out that the half-annulus property is especially made for this argument.
Moreover, {\it a posteriori}
all somewhere injective holomorphic discs are simple
and even simple along the boundary for a generic almost complex structure, see \cite{zehm08}.

This article is organised as follows:
A precise definition of simple holomorphic discs is given in Section \ref{section:characterisation},
where we show the equivalence to the name-giving property of not being somewhere locally multiply covered.
The annulus property is introduced in Section \ref{annuluspropertysubsec}
and the first steps of the proof of Theorem \ref{annulusintro} (which is identical with Theorem \ref{annulusproperty})
are made.
Section \ref{sec:boundint} treats the intersection behaviour at boundary intersection-points
based on the relative Carleman similarity principle.
Section \ref{section:mixed} does the analogue for mixed intersections
and the proof of Theorem \ref{annulusintro}
with a set-theoretical argument based on Section \ref{sec:boundint} is finished.
In Section \ref{halfannuluspropertysubsec} the half-annulus property is introduced and,
as a first step for Theorem \ref{halfannulusintro},
the equivalence to simplicity along the boundary is shown.
Sections \ref{injpointsalongtheboundary} and \ref{simplicityalongtheboundary}
are devoted to the concept of injective points of the boundary map of holomorphic discs.
Section \ref{notselfmatching} discusses the relation between simplicity along the boundary
and locally multiply covered boundary maps and
completes the proof of Theorem \ref{halfannulusintro}.


\section{Characterisation of simplicity\label{section:characterisation}}

Due to the fundamental result of Micallef-White \cite{micwh95}
the local behaviour of simple holomorphic spheres is well-understood,
and studied in full detail in the book of McDuff and Salamon \cite{mcdsal04}.
Some of these local properties remain valid for interior points of holomorphic discs.
The aim of this section is to give an overview of what is known about this.

Let us consider an almost complex manifold $(M,J)$
which contains a totally real submanifold $L$.
We would like to study the local behaviour of a $J$-holomorphic disc $u:(\D,\partial\D)\ra(M,L)$.
For this let us recall some definitions:
A {\bf critical point of} $u$ is a point $z\in\D$ such that $du(z)=0$.
Synonymously, we say {\bf singular point} or just {\bf singularity}.
We denote the set of critical points by
                                                            \begin{gather*}
                                                              \crit(u)\define
                                                              \bigset{
                                                                z\in\D
                                                                \suchthatb
                                                                du(z)=0
                                                              }
                                                            \end{gather*}
and the preimage of critical values by
                                                            \begin{gather*}
                                                              \crt(u)\define u^{-1}\Big(u\big(\crit(u)\big)\Big).
                                                            \end{gather*}
Notice, any point $z\in\D$
which is not a critical point of $u$,
is automatically immersed by $u$.
This is due to the fact that $u$ is $J$-holomorphic,
i.e.\ $u$ solves
                                                            \begin{gather*}
                                                              \partial_tu(z)=J\big(u(z)\big)\partial_su(z)
                                                            \end{gather*}
for all $z\in\D$.
Therefore,
by the inverse function theorem,
$u$ is a local embedding near its non-critical points.

Holomorphic maps satisfy the unique continuation principle well-known for analytic functions in one variable.
That the principle is valid for $J$-holomorphic maps is due to Carleman \cite{car39}.
For holomorphic discs we obtain therefore, see Lazzarini \cite[Theorem 3.5]{lazz00}:

\begin{prop}
  \label{pointsfinite}
  For a non-constant $J$-holomorphic disc $u:(\D,\partial\D)\ra(M,L)$ the following sets are finite:
  \begin{itemize}
  \item [(i)]
    \label{critpointsfinite}
    the set of critical points $\crit(u)$,
  \item [(ii)]
    \label{prepointsfinite}
    the preimage $u^{-1}(p)$ for all $p\in M$,
  \item [(iii)]
    \label{precritpointsfinite}
    and, in particular, the preimage $\crt(u)$ of critical values.
  \end{itemize}
\end{prop}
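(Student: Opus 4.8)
The plan is to prove all three finiteness statements as consequences of the unique continuation principle for $J$-holomorphic maps, which is the key input. The logical backbone is that each of the three sets, if infinite, would accumulate at some point of the compact disc $\D$, and unique continuation forbids such accumulation unless the map is (locally, hence globally) degenerate. Since $u$ is assumed non-constant, this yields a contradiction and hence finiteness. I would handle the three parts in a slightly different order than stated, proving (ii) first, then (i), and deriving (iii) as a corollary.

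For part (ii), suppose $u^{-1}(p)$ is infinite for some $p\in M$. By compactness of $\D$ there is an accumulation point $z_0\in\D$ with $u(z_0)=p$. I would work in a local chart around $p$ and set $v\define u-p$, so that $v$ is a $J$-holomorphic map vanishing on a set with an accumulation point $z_0$. The similarity principle of Carleman \cite{car39} lets one factor $v$ near $z_0$ as a holomorphic-type object times an invertible matrix-valued factor, reducing the zero-set analysis to that of a genuine holomorphic function in one complex variable. Since a nonzero holomorphic function has isolated zeros, the accumulation of zeros forces $v\equiv 0$ on a neighbourhood of $z_0$, and then the unique continuation principle propagates this across the connected disc $\D$, so $u\equiv p$, contradicting that $u$ is non-constant. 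The only subtlety is the distinction between interior and boundary accumulation points; at a boundary point one must invoke the version of the similarity principle valid up to the (totally real) boundary, but the conclusion is the same.

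For part (i), suppose $\crit(u)$ is infinite, so again there is an accumulation point $z_0$ of points where $du=0$. The cleanest approach is to apply the same unique continuation machinery to the first derivative: the condition $du=0$ is itself expressible through a $J$-holomorphic (or at least Carleman-type) equation satisfied by the components of $du$, so an accumulating zero-set of $du$ forces $du\equiv 0$ near $z_0$ and hence, by unique continuation, $du\equiv 0$ throughout $\D$. This again makes $u$ constant, contradicting the hypothesis. Part (iii) is then immediate: by (i) the set $\crit(u)$ is finite, so $u\big(\crit(u)\big)$ is a finite set of points in $M$, and $\crt(u)=u^{-1}\big(u(\crit(u))\big)$ is a finite union of preimages $u^{-1}(q)$, each finite by (ii); a finite union of finite sets is finite.

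The main obstacle, and the place where I expect to spend the most care, is the boundary behaviour. In the interior, everything reduces to the classical fact that nonconstant holomorphic functions have isolated zeros, and the Micallef–White local theory \cite{micwh95} applies directly. At boundary points, however, one must ensure that the similarity principle and the resulting unique continuation remain valid up to $\partial\D$, using that $L$ is totally real so that the reflection or the boundary version of Carleman's argument goes through. Rather than redeveloping this, I would cite the unique continuation principle in the form already attributed to Carleman in the text and point to Lazzarini \cite[Theorem 3.5]{lazz00} for the precise statement adapted to holomorphic discs, so that the proof here reduces to the accumulation-point argument sketched above applied to $u$ and to $du$.
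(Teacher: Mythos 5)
Your proposal is correct and is essentially the argument the paper relies on: the paper gives no proof of this proposition but simply cites Lazzarini \cite[Theorem 3.5]{lazz00}, and your sketch (Carleman similarity principle applied to $u-p$ and to $du$ to make their zero sets isolated, compactness of $\D$ to rule out infinite sets, the relative/boundary version of the similarity principle for accumulation at $\partial\D$ using that $L$ is totally real, and (iii) as a finite union of finite fibres) is precisely the standard proof underlying that citation. No gaps.
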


As it is well-known (see \cite{mcdsal04,zehm08})
the existence of an {\bf injective point},
a point $z\in\D$ such that
                                                            \begin{gather}
                                                              \label{sipoint}
                                                              u^{-1}\big(u(z)\big)=\set{z}
                                                              \quad\text{and}\quad
                                                              du(z)\neq0,
                                                            \end{gather}
can be used
to provide the solution space of the Cauchy-Riemann operator
(i.e.\ the moduli space of holomorphic discs)
with the structure of a finite dimensional manifold (at least locally).

\begin{prop}
  \label{injuopen}
  The set of all injective points of $u$
                                                            \begin{gather*}
                                                              \inj(u)\define
                                                              \bigset{
                                                                z\in\D
                                                                \suchthatb
                                                                z\;\;
                                                                \text{\it is an injective point of}
                                                                \;\;u
                                                              }
                                                            \end{gather*}
 is open in $\D$.
\end{prop}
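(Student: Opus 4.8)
The plan is to argue by contradiction, using the fact recorded above that $u$ is a local embedding near every non-critical point. Suppose $\inj(u)$ were not open. Then there would be an injective point $z_0\in\inj(u)$ and a sequence $z_n\in\D\setminus\inj(u)$ with $z_n\to z_0$. Since $du(z_0)\neq0$ by \eqref{sipoint} and $du$ depends continuously on the point, we have $du(z_n)\neq0$ for all sufficiently large $n$. Hence each such $z_n$ fails to be injective not because it is critical, but because it is a double point: there is a point $w_n\in\D$ with $w_n\neq z_n$ and $u(w_n)=u(z_n)$.

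Next I would exploit the compactness of the closed disc $\D$. After passing to a subsequence, assume $w_n\to w_0$ for some $w_0\in\D$. Continuity of $u$ then yields
\[
u(w_0)=\lim_{n\to\infty}u(w_n)=\lim_{n\to\infty}u(z_n)=u(z_0).
\]
As $z_0$ is injective we have $u^{-1}\big(u(z_0)\big)=\set{z_0}$, and therefore $w_0=z_0$. Thus both sequences $z_n$ and $w_n$ converge to the same point $z_0$.

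Finally I would close the argument with the local embedding property: since $du(z_0)\neq0$, the inverse function theorem provides a neighbourhood $U$ of $z_0$ in $\D$ on which $u\restrict_U$ is injective. For $n$ large enough both $z_n$ and $w_n$ lie in $U$, yet $u(z_n)=u(w_n)$ while $z_n\neq w_n$, contradicting the injectivity of $u\restrict_U$. This contradiction proves that $\inj(u)$ is open. The only delicate step is to guarantee that the competing preimages $w_n$ accumulate at $z_0$ rather than escaping elsewhere in the disc; this is exactly what compactness of $\D$ together with the hypothesis $u^{-1}\big(u(z_0)\big)=\set{z_0}$ forces. Once the two sequences are known to share the limit $z_0$, the embedding near $z_0$ finishes the proof immediately.
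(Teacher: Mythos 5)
Your proof is correct and follows essentially the same route as the paper's: both argue by contradiction with sequences $z_\nu\to z_0$ and $w_\nu\to w_0$ of double points, use $u^{-1}\big(u(z_0)\big)=\set{z_0}$ to force $w_0=z_0$, and then contradict the local injectivity of $u$ near $z_0$ coming from $du(z_0)\neq0$. Your explicit remark that $du(z_n)\neq0$ for large $n$ (so the failure must be a double point) is a small point the paper leaves implicit, but the argument is the same.
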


\begin{proof}
  Let $z_0$ be an injective point.
  We claim that there exists $r>0$ such that $u^{-1}\big(u(z)\big)=\set{z}$ for all $z\in B_r(z_0)\cap\D$.
  Otherwise, we could find sequences $z_{\nu}\ra z_0$ and $w_{\nu}\ra w_0$ in $\D$
  such that $z_{\nu}\neq w_{\nu}$ as well as $u(z_{\nu})=u(w_{\nu})$ for all $\nu\in\N$.
  But by the first condition in \eqref{sipoint} this would imply that $w_0=z_0$.
  This is a contradiction, because by the second condition in \eqref{sipoint}
  the restriction $u\restrict_V$ is an embedding (and hence injective)
  for some open neighbourhood $V$ of $z_0$ in $\D$.
\end{proof}

A $J$-holomorphic disc $u$ is called {\bf somewhere injective} if $u$ has an injective point,
i.e.\ if $\,\inj(u)\neq\emptyset$.
In order to find conditions under which a holomorphic disc is somewhere injective
we will study
the set of {\bf self-intersection points of} $u$
                                                            \begin{gather*}
                                                              S(u)\define
                                                              \bigset{
                                                                (z_1,z_2)\in\D\times\D\setminus\Delta_{\D}
                                                                \suchthatb
                                                                u(z_1)=u(z_2)
                                                              }
                                                            \end{gather*}
and
the projection to the first coordinate
                                                            \begin{gather*}
                                                              S_1(u)\define\pr_1S(u).
                                                            \end{gather*}
By the mean value theorem applied to the coordinate functions of $u$ we see that the closure of $S(u)$ is contained in
the disjoint union
                                                            \begin{gather}
                                                              \label{clousureofsu}
                                                              \overline{S(u)}\subset S(u)\sqcup\Delta_{\crit(u)}.
                                                            \end{gather}
The inclusion in \eqref{clousureofsu} is in general strict as the following injective map shows:
$\D\ni z\mapsto (z^2,z^3)$.
Moreover,
we observe that the complement of the set of injective points has the following description:
                                                            \begin{gather}
                                                              \label{dwithoutinju}
                                                              \D\setminus\inj(u)=
                                                              \crit(u)\cup S_1(u)\supset
                                                              \overline{S_1(u)}.
                                                            \end{gather}
Therefore, we see again that $\inj(u)$ is open.
Further, we remark that
                                                            \begin{gather*}
                                                              \crt(u)=
                                                              \crit(u)\cap S_1(u),
                                                            \end{gather*}
a fact that we will use throughout our arguments.

Following Lazzarini \cite{lazz00} we define now:

\begin{defi}
  \label{verydefofsimpledisc}
  A $J$-holomorphic disc $u$ is {\bf simple} if $\,\inj(u)$ is dense in $\D$.
\end{defi}

The first non-trivial observation concerning simplicity is due to Micallef and White \cite{micwh95}:

\begin{thm}
  \label{mwmagerthm}
  For any simple $J$-holomorphic disc $u$ the set
                                                            \begin{gather*}
                                                              S_{\inter}(u)\define
                                                              S(u)\cap
                                                              \big(B_1(0)\times B_1(0)\big)
                                                            \end{gather*}
  of {\bf interior self-intersection points of} $u$ is discrete in $B_1(0)\times B_1(0)$.
\end{thm}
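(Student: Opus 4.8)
The plan is to argue by contradiction, combining the local intersection theory of Micallef--White \cite{micwh95} with the density of injective points. Suppose $S_{\inter}(u)$ fails to be discrete, so that some $(z_1,z_2)\in S_{\inter}(u)$ is the limit of a sequence of pairwise distinct interior self-intersection points $(z_1^\nu,z_2^\nu)$. Since $(z_1,z_2)\in S(u)$ we have $z_1\neq z_2$, and I would fix disjoint open neighbourhoods $V_1\ni z_1$ and $V_2\ni z_2$ contained in $B_1(0)$, so that $z_1^\nu\in V_1$ and $z_2^\nu\in V_2$ for all large $\nu$. Writing $p\define u(z_1)=u(z_2)$, the relations $u(z_1^\nu)=u(z_2^\nu)$ exhibit intersection points of the two images $u(V_1)$ and $u(V_2)$ accumulating at $p$. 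Using Proposition \ref{pointsfinite}(ii), the finiteness of the fibres $u^{-1}(q)$, I would first pass to a subsequence along which the values $u(z_1^\nu)$ are pairwise distinct, so that these are genuinely distinct intersection points of the two branches rather than a single repeated value.

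The restrictions $u|_{V_1}$ and $u|_{V_2}$ are non-constant $J$-holomorphic curves, non-constant because simplicity forces $du\neq0$ on a dense set and hence prevents $u$ from being constant on any open subset of $\D$. At this point I would invoke the local structure theorem of Micallef--White \cite{micwh95}: two non-constant $J$-holomorphic curves sharing the value $p$ either agree near $p$ up to holomorphic reparametrisation of their domains, or else meet in only finitely many points throughout a sufficiently small neighbourhood of $p$. The accumulation of distinct intersection points produced above rules out the second alternative, so the images of $u|_{V_1}$ and $u|_{V_2}$ must coincide on a neighbourhood of $p$.

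From this coincidence of images I would conclude that, after shrinking, every value $u(w)$ with $w\in V_2$ is also attained on $V_1$: there is $w'\in V_1$ with $u(w)=u(w')$. Since $V_1\cap V_2=\emptyset$ forces $w'\neq w$, the pair $(w,w')$ lies in $S(u)$ and hence $w\in S_1(u)$. Thus the open set $V_2$ is contained in $S_1(u)$, so by \eqref{dwithoutinju} it contains no injective point of $u$. This contradicts Definition \ref{verydefofsimpledisc}, according to which $\inj(u)$ is dense in $\D$; therefore no such accumulation point exists and $S_{\inter}(u)$ is discrete.

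I expect the delicate point to be the clean application of the Micallef--White dichotomy when $z_1$ or $z_2$ is a critical point of $u$, where the branches are no longer embedded. There one must appeal to the local normal form of \cite{micwh95}, in which each branch of a $J$-holomorphic curve is, in suitable coordinates, modelled on an honest holomorphic curve, so that the two-alternative statement (coincident germs versus isolated intersections of positive local intersection number) remains available. Once this normal form is in hand, the remainder is the purely set-theoretic reasoning sketched above, and no further analytic input beyond Proposition \ref{pointsfinite} and the characterisation \eqref{dwithoutinju} of the non-injective points is needed.
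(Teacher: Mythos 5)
The paper offers no proof of this theorem at all: it is quoted as a result of Micallef--White, with a pointer to Lazzarini's argument in \cite[Theorem~E.1.2]{mcdsal04} for a proof avoiding minimal-surface techniques. So your proposal can only be measured against that external argument, of which it is essentially the standard local-to-global reduction: you invoke the Micallef--White local intersection dichotomy as a black box and then run the set-theoretic step (an open set contained in $S_1(u)$ contradicts density of $\inj(u)$ via \eqref{dwithoutinju} and the finiteness of $\crit(u)$). That part is sound, and it is the same mechanism the paper itself isolates in Lemma~\ref{weakversion} and Proposition~\ref{equivcharofsimple}; your subsequence extraction via Proposition~\ref{pointsfinite}(ii) to get pairwise distinct values is also correct.

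The gap is that you only treat accumulation of $S_{\inter}(u)$ at a point $(z_1,z_2)$ that itself lies in $S_{\inter}(u)$, hence with $z_1\neq z_2$. But the theorem, as the paper states and uses it (see the sentence immediately following it --- ``$S(u)\cap(\D_r\times\D_r)$ is finite \dots regardless whether $z_0$ and $w_0$ coincide or not'' --- and Remark~\ref{pointedembedding}, which is drawn from it), asserts that $S_{\inter}(u)$ has no accumulation point in $B_1(0)\times B_1(0)$ whatsoever, including diagonal points. By \eqref{clousureofsu} such a diagonal accumulation point must be of the form $(z_0,z_0)$ with $z_0\in\crit(u)$, and there your dichotomy gives nothing: the two ``branches'' are the same germ of $u$ at $z_0$, the alternative ``the curves agree up to reparametrisation'' is trivially satisfied by the identity, and no pair of disjoint open sets with equal image is produced, so simplicity is never contradicted. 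What is needed at that point is the genuinely hard content of \cite{micwh95} --- that a $J$-holomorphic germ which is not a local multiple cover is injective on a pointed neighbourhood of a branch point --- and this cannot be extracted from the two-branch dichotomy plus simplicity; it is exactly the part of \cite{micwh95} and \cite[Theorem E.1.2]{mcdsal04} that the paper is content to cite. Your closing paragraph does gesture at critical points, but only in the context of singular branches based at distinct $z_1\neq z_2$; the diagonal case is the one your argument does not reach.
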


Equivalently we could say that for all $r\in (0,1)$ the set $S(u)\cap(\D_r\times\D_r)$ is finite.
This means for a simple $J$-holomorphic disc $u$ there exist no sequences $z_{\nu}\ra z_0$ and $w_{\nu}\ra w_0$
in $B_1(0)$ with $z_{\nu}\neq w_{\nu}$ for all $\nu\in\N$ and $z_0,w_0\in B_1(0)$
such that $u(z_{\nu})=u(w_{\nu})$,
regardless whether $z_0$ and $w_0$ coincide or not.
A proof of Theorem \ref{mwmagerthm}
avoiding the theory of minimising area-like functionals is given by Lazzarini in \cite[Theorem E.1.2]{mcdsal04}.

\begin{rem}
  \label{pointedembedding}
  A particular consequence of Theorem \ref{mwmagerthm} is that for a simple disc $u$
  any point in $\crit(u)\cap B_1(0)$
  has a pointed neighbourhood $U^*$ such that the restriction $u\restrict_{U^*}$ is an embedding,
  i.e.\ is injective.
\end{rem}

Sometimes, a weaker version of Theorem \ref{mwmagerthm} is sufficient
in order to get informations about the geometric structure of a holomorphic disc, see \cite[Lemma 2.4.3]{mcdsal04}.

\begin{lem}
  \label{weakversion}
  Let $u$ be a $J$-holomorphic disc.
  Assume that there are sequences $z_{\nu}\ra z_0$ and $w_{\nu}\ra w_0$
  in $B_1(0)$ with $z_{\nu}\neq w_{\nu}$ for all $\nu\in\N$ and $z_0,w_0\in B_1(0)$
  such that $u(z_{\nu})=u(w_{\nu})$.
  Assume in addition that $du(z_0)\neq 0$.
  Then there exists an analytic function $\varphi$ defined on an open neighbourhood $V$ of $w_0$
  such that $\varphi(w_0)=z_0$ and $u=u\circ\varphi$ on $V$.
\end{lem}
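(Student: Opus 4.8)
The plan is to reparametrise $u$ near $w_0$ through the local $J$-holomorphic curve cut out by $u$ near $z_0$. First I would note that, by continuity of $u$, the hypotheses force $u(z_0)=u(w_0)\enifed p$. Because $du(z_0)\neq0$ and $u$ is $J$-holomorphic, the point $z_0$ is immersed, so there is an open neighbourhood $U$ of $z_0$ in $B_1(0)$ on which $u$ is an embedding; write $Z\define u(U)$ for the resulting (real two-dimensional) $J$-holomorphic submanifold through $p$. Since $z_{\nu}\ra z_0$, we have $z_{\nu}\in U$ for all large $\nu$, and hence $u(w_{\nu})=u(z_{\nu})\in Z$. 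After passing to a subsequence we may assume $w_{\nu}\neq w_0$ for all $\nu$, so that the points $w_{\nu}$ accumulate at $w_0$ while their images lie on $Z$.

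Next I would pass to convenient local coordinates around $p$. Using that $Z$ is $J$-holomorphic, choose coordinates $W\ra\C^n$ on a neighbourhood $W$ of $p$ in $M$ sending $p$ to $0$, carrying $Z$ to $\C\times\set{0}$, and arranging $J$ to agree with the standard structure $J_0$ along $\C\times\set{0}$. In these coordinates write $u=(f,g)$ near $w_0$ with $f\in\C$ and $g\in\C^{\,n-1}$, so that $Z=\set{g=0}$ and the condition $u(w_{\nu})\in Z$ becomes $g(w_{\nu})=0$. Substituting $u$ into the Cauchy--Riemann equation $\partial_tu=J(u)\partial_su$ and using $J(x,0)=J_0$, so that $J(u)-J_0=O(\abs{g})$, I would derive for the normal component a perturbed Cauchy--Riemann equation of the form $\partial_sg+i\partial_tg+a\,g+b\,\overline{g}=0$ with continuous matrix-valued coefficients $a,b$ depending on the one-jet of $u$.

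With this equation in hand the argument is concluded by the similarity principle of Carleman \cite{car39}: on a neighbourhood $V$ of $w_0$ one may write $g=\Sigma\,h$ with $\Sigma$ a continuous $\gl$-valued function and $h$ a genuinely holomorphic $\C^{\,n-1}$-valued map. As $g(w_{\nu})=0$ and the $w_{\nu}$ accumulate at $w_0$, the holomorphic function $h$ vanishes on a set with an accumulation point, whence $h\equiv0$ and therefore $g\equiv0$ on $V$. Thus $u(V)\subset Z$, and shrinking $V$ so that $u(V)\subset u(U)$ I would set $\varphi\define(u\restrict_U)^{-1}\circ(u\restrict_V)$. Being the transition map between two $J$-holomorphic parametrisations of the curve $Z$, the map $\varphi$ is holomorphic (hence analytic) on $V$, satisfies $\varphi(w_0)=(u\restrict_U)^{-1}(p)=z_0$, and obeys $u=u\circ\varphi$ by construction.

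I expect the main obstacle to be the middle step: choosing the coordinates that straighten $Z$ to $\C\times\set{0}$ and simultaneously normalise $J$ along $Z$, and then carefully extracting the perturbed Cauchy--Riemann equation for the normal component $g$ with coefficients controlled well enough that Carleman's similarity principle applies. Once the equation has the stated first-order form with continuous zeroth-order terms, the reduction of ``$g$ vanishes on an accumulating sequence'' to ``$g\equiv0$'' via the holomorphic factor $h$ is routine, and the final reparametrisation is a standard consequence of $u\restrict_U$ being an embedding.
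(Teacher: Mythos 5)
Your proof is correct and follows essentially the route the paper intends: the paper does not prove Lemma \ref{weakversion} itself but refers to \cite[Lemma 2.4.3]{mcdsal04}, whose second proof is exactly your argument (flatten the embedded branch through $z_0$, derive a perturbed Cauchy--Riemann equation for the normal component $g$, and kill it with the Carleman similarity principle), a scheme the paper reuses almost verbatim in its proofs of Lemmas \ref{signofdelta} and \ref{isolembb}. The one step to watch is ``after passing to a subsequence we may assume $w_{\nu}\neq w_0$'': this is the form of the hypothesis under which \cite{mcdsal04} states the result, and it cannot be deduced from the statement as printed here (constant sequences at a transverse double point satisfy the printed hypotheses but not the conclusion), so your reduction is really an implicit sharpening of the hypothesis rather than a consequence of it.
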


The second proof of this fact given in \cite{mcdsal04} relies on the Carleman similarity principle.
In section \ref{sec:boundint} we extend this approach to cases,
where the boundary is present.

Lemma \ref{weakversion} can be used to give an equivalent characterisation of simplicity
which is more practical in the applications:

\begin{prop}
  \label{equivcharofsimple}
  A $J$-holomorphic disc $u$ is simple if and only if there are no two nonempty
  disjoint open subsets $U_1$ and $U_2$ of $\D$
  such that $u(U_1)=u(U_2)$.
\end{prop}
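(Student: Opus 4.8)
The plan is to prove the two implications separately, with the reverse direction being the only substantial one. For the easy implication, suppose there exist disjoint nonempty open sets $U_1,U_2\subseteq\D$ with $u(U_1)=u(U_2)$. Then for each $z\in U_1$ the value $u(z)$ lies in $u(U_2)$, so there is $w\in U_2$ with $u(w)=u(z)$; since $U_1\cap U_2=\emptyset$ we have $w\neq z$, whence $z\notin\inj(u)$. Thus $U_1\cap\inj(u)=\emptyset$, so $\inj(u)$ is not dense and $u$ fails to be simple by Definition \ref{verydefofsimpledisc}.

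For the converse I would argue by contraposition: assuming $u$ is not simple, I produce the two open sets. Non-density of $\inj(u)$, which is open by Proposition \ref{injuopen}, yields a nonempty relatively open $W\subseteq\D$ disjoint from $\inj(u)$. Since every nonempty relatively open subset of $\D$ meets the interior $B_1(0)$, and since $\crit(u)$ is finite by Proposition \ref{pointsfinite}, I can shrink $W$ to a nonempty open subset of $B_1(0)$ on which $du\neq 0$. By the description $\D\setminus\inj(u)=\crit(u)\cup S_1(u)$ recorded in \eqref{dwithoutinju}, the shrunken $W$ lies in $S_1(u)$, so every $z\in W$ carries a partner $w\neq z$ with $u(z)=u(w)$.

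The main obstacle is that this partner $w$ could a priori sit on $\partial\D$ (a mixed intersection), whereas Lemma \ref{weakversion} requires \emph{both} points to be interior. I would rule this out by total reality: if every $z\in W$ had all of its partners on $\partial\D$, then $u(W)\subseteq u(\partial\D)\subseteq L$; but for $z\in W$ the differential $du(z)\neq 0$ has $J$-invariant image, a complex line in $T_{u(z)}M$ that would then be tangent to $L$, contradicting the total reality of $L$. Hence some $z_0\in W$ admits an interior partner $w_0\in B_1(0)$ with $z_0\neq w_0$, $u(z_0)=u(w_0)$, and $du(z_0)\neq 0$. This step is the crux and is exactly where the disc case departs from the closed case.

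It then remains to conclude. Applying Lemma \ref{weakversion} to the constant sequences $z_0,w_0$ produces an analytic $\varphi$ on a neighbourhood $V$ of $w_0$ with $\varphi(w_0)=z_0$ and $u=u\circ\varphi$ on $V$. Because $u$ is non-constant with only finitely many critical points, $\varphi$ cannot be constant, so $\varphi$ is an open map and the relation $u=u\circ\varphi$ gives $u(V)=u\big(\varphi(V)\big)$. Setting $U_1:=V$ and $U_2:=\varphi(V)$, and shrinking $V$ around $w_0$ so that $V$ and its image lie in disjoint neighbourhoods of the distinct points $w_0$ and $z_0$, yields the required disjoint nonempty open subsets with $u(U_1)=u(U_2)$.
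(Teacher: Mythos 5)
Your easy direction and the overall strategy for the converse (reduce to a nonempty open set $W\subset S_1(u)$ of non-critical interior points via \eqref{dwithoutinju}, then invoke Lemma \ref{weakversion} and openness of the resulting analytic map to manufacture the two disjoint open sets) coincide with the paper's proof. Your total-reality observation is moreover a genuine and welcome addition: the paper simply asserts that an interior point of $S_1(u)$ yields an accumulation point of $S_{\inter}(u)$, and your argument that $\set{z\in W\suchthat u(z)\in L}$ has empty interior is exactly the ingredient needed to rule out that all intersection partners escape to $\partial\D$.

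However, your final step has a genuine gap: you apply Lemma \ref{weakversion} to the \emph{constant} sequences $z_\nu\equiv z_0$, $w_\nu\equiv w_0$. The content of that lemma (McDuff--Salamon, Lemma 2.4.3, via the Carleman similarity principle) is a unique continuation statement and requires the self-intersection points to \emph{accumulate} at $(z_0,w_0)$; the hypothesis ``$z_\nu\neq w_\nu$'' in the paper's formulation is an imprecision, the operative condition being $z_\nu\neq z_0$. Read literally with constant sequences, the lemma would assert that every transverse interior double point of an immersed disc forces two sheets to overlap locally, which is false --- a disc with a single transverse self-intersection is simple, yet it carries a pair $(z_0,w_0)$ with $z_0\neq w_0$, $u(z_0)=u(w_0)$, $du(z_0)\neq0$. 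So a single interior pair does not suffice; you must use the full strength of having an \emph{open} set of double points. The repair is to push your total-reality idea one step further: the set $W\setminus u^{-1}(L)$ is dense in $W$, and for $z$ in this set \emph{every} partner lies in $B_1(0)$, since a boundary partner would force $u(z)\in u(\partial\D)\subset L$. Choose pairwise distinct $z_\nu\to z_0$ in $W\setminus u^{-1}(L)$ with partners $w_\nu\in B_1(0)$; after passing to a subsequence, $w_\nu\to w_0$, and $w_0\in B_1(0)$ because $u(w_0)=u(z_0)\notin L$, while $w_0\neq z_0$ by \eqref{clousureofsu} since $z_0\notin\crit(u)$. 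This produces a genuinely accumulating sequence in $S_{\inter}(u)$ to which Lemma \ref{weakversion} legitimately applies, and the remainder of your conclusion then goes through unchanged.
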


\begin{proof}
  The $J$-holomorphic disc $u$ is not simple if and only if $\inj(u)$ is not dense in $\D$,
  i.e.\ the complement $\D\setminus\inj(u)$ has an interior point.
  Because of the fact that $\crit(u)$ is finite, $u$ is not simple if and only if $S_1(u)$ has an interior point,
  see \eqref{dwithoutinju}.
  
  If there are two nonempty disjoint open subsets $U_1$ and $U_2$ of $\D$
  such that $u(U_1)=u(U_2)$ we get in particular $U_1\subset S_1(u)$
  which means that $S_1(u)$ has an interior point.
  By the previous discussion this implies that $u$ is not simple.
  
  On the other hand, if $S_1(u)$ has an interior point then in particular $S_{\inter}(u)$ has an accumulation point.
  Because $\crit(u)$ is finite we can assume that we are in the situation of Lemma \ref{weakversion}.
  Now $du(z_0)\neq 0$ implies that $z_0\neq w_0$ and we can further assume that $U\define\varphi(V)$
  and $V$ are disjoint.
  Notice, that any analytic function is open.
  Consequently, we found nonempty disjoint open sets $U$ and $V$ such that $u(V)=u(U)$.
\end{proof}

We see that our definition of simplicity coincides with the one given by Lazzarini in \cite{mcdsal04}.
Combining Theorem \ref{mwmagerthm} with Proposition \ref{equivcharofsimple}
we get a further characterisation of simplicity:

\begin{cor}
  A $J$-holomorphic disc $u$ is simple if and only if $S_{\inter}(u)$ is a discrete subset of $B_1(0)\times B_1(0)$.
\end{cor}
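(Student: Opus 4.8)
The plan is to read off the corollary from the two statements just proved, establishing the two implications separately; I expect no new analytic input to be needed, since everything reduces to Theorem \ref{mwmagerthm} and (the proof of) Proposition \ref{equivcharofsimple}.

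For the implication ``$u$ simple $\Rightarrow$ $S_{\inter}(u)$ discrete'' I would simply quote Theorem \ref{mwmagerthm}, which asserts exactly that $S_{\inter}(u)$ is discrete in $B_1(0)\times B_1(0)$ whenever $u$ is simple. This direction is immediate and requires no further argument.

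For the converse I would argue by contraposition, reusing the construction carried out inside the proof of Proposition \ref{equivcharofsimple}. Assume $u$ is not simple. As shown there, the finiteness of $\crit(u)$ places us in the situation of Lemma \ref{weakversion}, so we obtain an interior point $z_0$ with $du(z_0)\neq0$, an interior point $w_0\neq z_0$, an open neighbourhood $V\subset B_1(0)$ of $w_0$, and an analytic map $\varphi$ with $\varphi(w_0)=z_0$ and $u=u\circ\varphi$ on $V$. Shrinking $V$ we may assume $U\define\varphi(V)$ and $V$ are disjoint open subsets of $B_1(0)$. Then the assignment $w\mapsto\big(\varphi(w),w\big)$ sends $V$ injectively into $S_{\inter}(u)$: indeed $u(\varphi(w))=u(w)$ and $\varphi(w)\neq w$ because $U$ and $V$ are disjoint, while both coordinates lie in $B_1(0)$. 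Since $V$ is open, this produces a two-parameter family inside $S_{\inter}(u)$ that manifestly accumulates, so $S_{\inter}(u)$ is not discrete in $B_1(0)\times B_1(0)$. Contraposing gives that discreteness of $S_{\inter}(u)$ forces $u$ to be simple, which completes the equivalence.

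As both halves are consequences of previously established results, there is no serious obstacle here; the only point deserving attention is to keep the matching pairs in the open disc, i.e.\ to ensure that the sets $U$ and $V$ are genuinely interior rather than drifting to $\partial\D$. This is precisely what Lemma \ref{weakversion} guarantees once $du(z_0)\neq0$ at an interior non-critical point, so invoking that lemma removes the potential difficulty and closes the argument.
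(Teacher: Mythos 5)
Your proof is correct and follows essentially the same route as the paper, which states the corollary simply as a combination of Theorem \ref{mwmagerthm} (for the forward direction) and Proposition \ref{equivcharofsimple} (for the converse) without writing out details. Your one genuine addition is a sensible one: by reusing the graph of $\varphi$ from Lemma \ref{weakversion} rather than only the statement of Proposition \ref{equivcharofsimple}, you guarantee that the matching partners $(\varphi(w),w)$ stay in $B_1(0)\times B_1(0)$, a point the paper leaves implicit.
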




\section{The annulus property\label{annuluspropertysubsec}}

Throughout this section we consider a $J$-holomorphic disc $u:(\D,\partial\D)\ra(M,L)$,
where $(M,J)$ is an almost complex manifold containing a totally real submanifold $L$.
The aim is to show that for any interior point of $\D$ 
there exist arbitrary small surrounding annuli
which consist completely of injective points of $u$,
provided $u$ is simple.
In other words, we will prove that $u$ has the annulus property.
This is the analogous statement to \cite[Corollary 2.5.4]{mcdsal04},
where that case of holomorphic spheres is considered.

\begin{defi}
  \label{def:annulusprop}
  We will say that $u$ has the {\bf annulus property}
  if for any point $z_0\in B_1(0)$ and any given $\varepsilon>0$
  with $\overline{B_{\varepsilon}(z_0)}\subset B_1(0)$
  there exists a closed subset $A_{\varepsilon,z_0}$ diffeomorphic to the annulus $S^1\times [0,1]$
  such that the following conditions are satisfied:
  \begin{itemize}
  \item [(i)]
    $A_{\varepsilon,z_0}\subset B^*_{\varepsilon}(z_0)$,
    where $B^*_{\varepsilon}(z_0)\define  B_{\varepsilon}(z_0)\setminus\set{z_0}$,
  \item [(ii)]
    a boundary component of $A_{\varepsilon,z_0}$ has winding number $1$ around $z_0$,
  \item [(iii)]
    $u\restrict_{A_{\varepsilon,z_0}}$ is an embedding, and
  \item [(iv)]
    $u^{-1}\big(u(A_{\varepsilon,z_0})\big)=A_{\varepsilon,z_0}$.
  \end{itemize}
\end{defi}

We see that any $J$-holomorphic disc $u$
which has the annulus property is simple.
This is because for all $z_0\in B_1(0)$ and for all $\varepsilon>0$
the annulus $A_{\varepsilon,z_0}$ is contained in $\inj(u)$
so that the set of injective points is dense,
see Definition \ref{verydefofsimpledisc}.
The converse is also true:

\begin{thm}
  \label{annulusproperty}
  A $J$-holomorphic disc is simple if and only if it has the annulus property.
\end{thm}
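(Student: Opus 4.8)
The forward implication, that the annulus property implies simplicity, is already established in the discussion preceding the statement, since each $A_{\varepsilon,z_0}$ lies in $\inj(u)$ and thus witnesses density of the injective points. The plan is therefore to prove the converse: a simple disc has the annulus property. Fix an interior point $z_0\in B_1(0)$ and $\varepsilon>0$ with $\overline{B_\varepsilon(z_0)}\subset B_1(0)$.

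First I would localise. Since $\crit(u)$ and $u^{-1}\big(u(z_0)\big)$ are finite by Proposition \ref{pointsfinite}, and since $u$ is a local embedding at non-critical points while $u$ restricted to a pointed neighbourhood of a critical point is injective by Remark \ref{pointedembedding}, after shrinking $\varepsilon$ I may assume there is $\varepsilon'>\varepsilon$ such that $B_{\varepsilon'}(z_0)$ contains no critical point other than possibly $z_0$, that $u\restrict_{B_{\varepsilon'}(z_0)}$ is injective, and that $u(z)\neq u(z_0)$ for every $z\in B_{\varepsilon'}(z_0)\setminus\set{z_0}$. Consequently the non-injective points of the punctured ball are exactly the double points whose partner lies in the far part $K\define\overline{\D}\setminus B_{\varepsilon'}(z_0)$, so the relevant bad set is
\[
X\define \overline{B_\varepsilon(z_0)}\cap u^{-1}\big(u(K)\big),
\]
which is closed and, by simplicity, has empty interior: an interior point would place an open subset of $\D$ into $S_1(u)$, contradicting Definition \ref{verydefofsimpledisc}.

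The heart of the matter is to show that $X$ is in fact \emph{finite}. Geometrically $X$ is the pull-back to $B_{\varepsilon'}(z_0)$ of the intersection of the embedded $J$-holomorphic piece $u\big(B_{\varepsilon'}(z_0)\big)$ with the $J$-holomorphic far part $u(K)$; since the two pieces cannot agree on any open set (again by simplicity), one expects this intersection to be discrete. For partners $w$ ranging over a compact subset of the interior $B_1(0)$ this is precisely the content of Theorem \ref{mwmagerthm}: the set $S_{\inter}(u)$ is discrete, so only finitely many pairs $(z,w)$ with $z\in\overline{B_\varepsilon(z_0)}$ and $w$ interior can contribute. \textbf{The main obstacle is the boundary.} Micallef--White controls only interior accumulation, whereas the partners $w$ may converge to $\partial\D$, producing mixed self-intersections whose first coordinates could a priori accumulate in $\overline{B_\varepsilon(z_0)}$. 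Ruling these out is exactly the role of the relative Carleman similarity principle developed in Section \ref{sec:boundint} and of the mixed-intersection analysis of Section \ref{section:mixed}; I would invoke those results to exclude accumulation of $X$ near the boundary as well, concluding that $X$ is finite.

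Granting finiteness of $X$, the annulus is produced by a good-radius argument. The radial projection $z\mapsto\abs{z-z_0}$ sends the finite set $X$ to finitely many values, so I may choose $r\in(0,\varepsilon)$ with $\partial B_r(z_0)\cap X=\emptyset$. As $X$ is closed and $\partial B_r(z_0)$ compact, $\dist\big(\partial B_r(z_0),X\big)>0$, so for small $\delta>0$ the closed collar $A_{\varepsilon,z_0}\define\setlr{z\suchthat\big|\abs{z-z_0}-r\big|\le\delta}$ avoids $X$ and lies in $B^*_\varepsilon(z_0)$. By construction $A_{\varepsilon,z_0}$ is diffeomorphic to $S^1\times[0,1]$ and one of its boundary circles winds once around $z_0$, while every point of $A_{\varepsilon,z_0}$ is injective; the latter yields at once that $u\restrict_{A_{\varepsilon,z_0}}$ is an embedding and that $u^{-1}\big(u(A_{\varepsilon,z_0})\big)=A_{\varepsilon,z_0}$, verifying (i)--(iv) of Definition \ref{def:annulusprop}.
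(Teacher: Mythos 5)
Your forward direction and the initial localisation agree with the paper, but the core of your converse argument rests on a claim the paper neither makes nor needs, and which is not justified by the cited tools: that the bad set $X=\overline{B_\varepsilon(z_0)}\cap u^{-1}\big(u(K)\big)$ is \emph{finite}. Theorem \ref{mwmagerthm} only controls pairs $(z,w)$ with both coordinates in a compact subset of the open disc; the partners $w$ may accumulate at $\partial\D$, and neither the relative Carleman similarity principle nor the analysis of Section \ref{section:mixed} excludes infinitely many mixed self-intersections. On the contrary, the paper's whole architecture is built around the possibility that $S_1$ is an infinite compact set: what is actually proved is that $S_1$ is nowhere dense, that its accumulation set $R_1$ contains no embedded arc (Lemma \ref{noembeddingintor1}, via the sign of boundary intersections and Lemma \ref{isolembb}), and that every point of $S_1$ admits arbitrarily small circles $\partial B_\delta(z^*)$ disjoint from $S_1$ (Lemmas \ref{disjboundlem} and \ref{disjboundlem2}, which use the decomposition $R_1=\tilde R_1\sqcup R_1^{\virt}$ and the one-dimensional localisation of $S_1^{\mix}$ from Lemma \ref{typetwo}). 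The concluding remark that $R_1$ is totally path disconnected of measure zero makes clear that finiteness is not being asserted.

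This gap also undermines your final "good-radius" step. Even granting that $X$ contains no arcs, a totally disconnected compact set can have radial projection of full measure in $(0,\varepsilon)$, so there is no reason a \emph{round} circle $\partial B_r(z_0)$ avoiding $X$ should exist. The paper circumvents this by covering the compact set $S_1$ with finitely many closed, pairwise disjoint, simply connected sets whose boundaries miss $S_1$ and which meet $\partial\A$ in connected pieces, and then extracting an embedded closed curve of winding number $1$ in the complement of that cover; the resulting annulus is a tubular neighbourhood of this (in general non-round) curve. To repair your argument you would either have to prove discreteness of the mixed self-intersection set in $\overline{B_\varepsilon(z_0)}$ — which is not available — or replace the round-circle selection with a separation argument of the kind just described.
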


\begin{proof}[{\bf Proof of Theorem \ref{annulusproperty} (part I)}]
  The above discussion shows that it is enough to show that
  any simple $J$-holomorphic disc $u$ has the annulus property.
  We start our proof with some preliminaries.
  Let $z_0\in B_1(0)$ be any point and take $\varepsilon>0$ such that $\overline{B_{\varepsilon}(z_0)}\subset B_1(0)$.
  By Proposition \ref{pointsfinite} 
  we can choose $\varepsilon>0$ so small such that
                                                            \begin{gather}
                                                              \label{firstconclusion}
                                                              B^*_{\varepsilon}(z_0)\cap\crt(u)=\emptyset.
                                                            \end{gather}
  In particular, $u\restrict_{B^*_{\varepsilon}(z_0)}$ is an immersion but potentially with self-intersections.
  By Theorem \ref{mwmagerthm} we have that $S_{\inter}(u)$ is discrete and, consequently,
  \[
  S(u)\cap\big(B_{\varepsilon}(z_0)\times B_{\varepsilon}(z_0)\big)
  \]
  is a finite set.
  Hence, we can assume after shrinking $\varepsilon>0$ if necessary
  that the restricted map $u\restrict_{B^*_{\varepsilon}(z_0)}$ is injective.
  Or equivalently, $u\restrict_{B^*_{\varepsilon}(z_0)}$ is an embedding, cf.\ Remark \ref{pointedembedding}.
  
  In order to prove the annulus property
  it remains to study the subset $u^{-1}\big(u(\A)\big)$ of $\D$,
  where
                                                            \begin{gather*}
                                                              \A\define
                                                              \overline{B_{\varepsilon/2}(z_0)}\setminus B_{\varepsilon/4}(z_0)
                                                              \subset
                                                              B^*_{\varepsilon}(z_0).
                                                            \end{gather*}
  The aim is to show that there is a subannulus of $\A$
  which has winding number $1$ and consists entirely of injective points.
  Because $\inj(u)$ is open (see Proposition \ref{injuopen})
  for this it will be enough to show that there is an embedded circle in $\A$
  which has winding number equal to $1$,
  such that any point on the circle is injective.
  
  In order to provide the space of self-intersections with a useful structure
  we will introduce some terminology:
  On the set $\A\times\D$ we define the following correspondence:
                                                            \begin{gather*}
                                                              \SS\define
                                                              S(u)\cap\A\times\D.
                                                            \end{gather*}
  By simplicity of $u$ this set has no interior point in $\A\times\D$,
  see Proposition \ref{equivcharofsimple}.
  Furthermore, in view of \eqref{firstconclusion} the set $\SS$ must be closed.
  Therefore, the set of accumulation points
                                                            \begin{gather*}
                                                              \RR\define\acc(\SS)
                                                            \end{gather*}
  of $\SS$ is contained in $\SS$.
  Theorem \ref{mwmagerthm} implies now that $\acc(\SS)$ is a subset of $\A\times\partial\D$
  and the intersection $\SS\cap\A\times B_1(0)$ is discrete.
  Hence, we get
                                                            \begin{gather}
                                                              \label{disjointunionofintpoints}
                                                              \RR\subset\big(\SS\cap\A\times\partial\D\big)
                                                              \quad\text{and}\quad
                                                              \RR\cap\A\times B_1(0)=\emptyset.
                                                            \end{gather}
  Consequently, the projection to the first component
                                                            \begin{gather*}
                                                              R_1\define\pr_1\!\RR
                                                            \end{gather*}
  which is a compact subset of $\A$,
  has no interior point (viewed as a subspace of $\A$).
  This means that the set $\A\setminus R_1$ is dense in $\A$,
  or equivalently the set $R_1$ is nowhere dense in $\A$.
  Roughly speaking, $R_1$ contains no $2$-dimensional components.
  In fact, $1$-dimensional pieces are not there as well:

  \begin{lem}
    \label{noembeddingintor1}
    There exists no embedding $c:[-1,1]\ra\A$ with $c([-1,1])\subset R_1$.
  \end{lem}

  Taking the lemma to be granted we see together with \eqref{disjointunionofintpoints}
  that the compact set
                                                            \begin{gather*}
                                                              S_1\define\pr_1\!\SS
                                                            \end{gather*}
  is nowhere dense in $\A$ and there exists no embedding $c:[-1,1]\ra\A$
  such that its image $c([-1,1])$ is contained in $S_1$.

  Before we continue with the construction of the desired annulus $A_{\varepsilon,z_0}$
  we will give the proof of Lemma \ref{noembeddingintor1}.
  It relies an the relative Carleman similarity principle and is postponed to section \ref{sec:boundint}.
\end{proof}

\begin{rem}
  The set $\SS\cap\A\times\partial\D$ has no interior point in $\A\times\partial\D$.
  Otherwise, we would find open subsets $U\subset\A$ and $I\subset\partial\D$
  such that $u(U)=u(I)$, and hence, that $u(U)\subset L$.
  This contradicts the fact that $L$ is totally real.
  Further, the set
                                                            \begin{gather*}
                                                              \pr_1\!\big(S(u)\cap\A\times\partial\D\big)=
                                                              \bigset{
                                                                x\in\A
                                                              \suchthatb
                                                              \exists
                                                              y\in\partial\D:
                                                              u(x)=u(y)
                                                              }
                                                            \end{gather*}
  has no interior point in $\A$.
  Otherwise, there would exist an open subset $U$ of $\A$ such that $u(U)\subset u(\partial\D)\subset L$,
  which is again a contradiction.
\end{rem}


\section{Boundary intersections \label{sec:boundint}}

Let $(M,J)$ be an almost complex manifold of real dimension $2n$ containing a totally real submanifold $L$.
In this section we consider two $J$-holomorphic embeddings
\[
u,v:\big(\D^+,[-1,1],0\big)\lra(M,L,p),
\]
of the closed upper half-disc $\D^+=\D\cap\set{\im(z)\geq0}$.
We are interested in the accumulation behaviour of the intersection points of $u$ and $v$ near their boundaries.
So we consider two sequences $z_{\nu}\ra 0$ and $w_{\nu}\ra 0$ of points in $\D^+\setminus\set{0}$
such that $u(z_{\nu})=v(w_{\nu})$ for all $\nu\in\N$.

If we stick to the $4$-dimensional case for the moment
we see that $u$ and $v$ must be tangent at $0$,
i.e.\ the images of $du(0)$ and $dv(0)$ must coincide.
Indeed,
assuming the contrary
the $J(p)$-invariance of the tangent spaces of the half-discs at $p$
would then imply that the intersection of $du(0)\set{\C}$ with $dv(0)\set{\C}$ is $\set{0}$.
But by the dimension assumption
this means that $u$ and $v$ would be transverse at $(0,0)$.
This is a contradiction.

\begin{rem}
  In the above argument the presence of the boundary was not used.
  Taking the boundary into account we see
  that the linear maps $du(0)$ and $dv(0)$ are collinear over $\R$.
\end{rem}

This is not just a $4$-dimensional phenomenon as the following lemma will show.
So independent of the dimension the holomorphic half-discs are tangent at the accumulation point.

\begin{lem}
  \label{signofdelta}
  Let $u,v$ be as above.
  Then there exists a non-zero real number $\delta$ such that $du(0)=\delta dv(0)$.
\end{lem}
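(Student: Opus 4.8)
The plan is to compare $u$ and $v$ with their first-order Taylor expansions at $0$ and read off the tangency from the leading-order behaviour of the intersection sequence. Since $u$ and $v$ are smooth up to the boundary and are embeddings, the differentials $du(0)$ and $dv(0)$ are injective. Fixing a chart around $p$ in which $p=0$ and $J(p)$ is orthogonal for the Euclidean metric, I would write $u(z)=du(0)z+o(\abs{z})$ and $v(w)=dv(0)w+o(\abs{w})$. Set $a=du(0)(1)$ and $b=dv(0)(1)$; these are nonzero, and since $1$ is tangent to $[-1,1]$ and both maps send $[-1,1]$ into $L$, we have $a,b\in T_pL$. By $\C$-linearity of the differentials one has $du(0)(z)=z\cdot a$ and $dv(0)(w)=w\cdot b$, where $\cdot$ denotes complex multiplication in $(T_pM,J(p))$.

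First I would extract the growth rates. With the compatible metric, $\norm{u(z_\nu)}=\abs{z_\nu}\big(\norm{a}+o(1)\big)$ and $\norm{v(w_\nu)}=\abs{w_\nu}\big(\norm{b}+o(1)\big)$; since $u(z_\nu)=v(w_\nu)$, this forces $\abs{z_\nu}/\abs{w_\nu}\to\norm{b}/\norm{a}$, a finite positive number. Passing to a subsequence so that $z_\nu/\abs{z_\nu}\to\zeta$ and $w_\nu/\abs{w_\nu}\to\omega$ on the unit circle, I would divide the identity $du(0)z_\nu+o(\abs{z_\nu})=dv(0)w_\nu+o(\abs{w_\nu})$ by $\abs{z_\nu}$ and pass to the limit. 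The remainder terms vanish in the limit (using the bounded ratio to control the $v$-remainder), yielding $\zeta\cdot a=(\norm{b}/\norm{a})\,\omega\cdot b$. Hence $a=\mu b$ with $\mu=(\norm{b}/\norm{a})\,\omega\zeta^{-1}\in\C^*$, so $a$ and $b$ are collinear over $\C$.

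The decisive step is to upgrade complex collinearity to collinearity over $\R$, and this is exactly where the boundary hypothesis enters. Writing $\mu=\alpha+i\beta$, the relation $a=\mu b$ reads $a=\alpha b+\beta\,J(p)b$. As $a,b\in T_pL$, if $\beta\neq0$ then $J(p)b=\beta^{-1}(a-\alpha b)\in T_pL$, so $J(p)b\in T_pL\cap J(p)T_pL=\set{0}$, contradicting $b\neq0$ together with the injectivity of $J(p)$. Therefore $\beta=0$, and $\delta\define\mu=\alpha$ is a nonzero real number with $a=\delta b$. Finally $du(0)(1)=\delta\,dv(0)(1)$ together with $du(0)(i)=J(p)a=\delta J(p)b=\delta\,dv(0)(i)$ gives $du(0)=\delta\,dv(0)$ by $\C$-linearity.

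The main obstacle is the asymptotic bookkeeping: one must justify that the ratio $\abs{z_\nu}/\abs{w_\nu}$ stays bounded away from $0$ and $\infty$, so that neither leading term degenerates and the $v$-remainder is controlled after dividing by $\abs{z_\nu}$, and that subsequential limits of the normalised directions suffice. The total-reality argument is short but conceptual: it is what refines the complex collinearity $a=\mu b$ (the tangency of the images, valid in every dimension, extending the four-dimensional heuristic above) into collinearity over $\R$, matching the Remark preceding the lemma.
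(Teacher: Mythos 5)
Your argument is correct and reaches the conclusion by a genuinely more elementary route than the paper. Both proofs ultimately live off the first-order expansions $u(z)=z\cdot a+o(\abs{z})$, $v(w)=w\cdot b+o(\abs{w})$ with $a,b\in T_pL\setminus\set{0}$, and both use total reality identically at the end: the paper's one-line remark that $u_s(0),v_s(0)\in T_pL$ forces the collinearity constant to be real is exactly your $T_pL\cap J(p)T_pL=\set{0}$ computation. The difference lies in how complex collinearity is extracted. The paper argues by contradiction: it produces the expansions via the relative Carleman similarity principle, normalises supposedly non-collinear $a,b$ to $e_1$ and $e_n$ by a complex linear map preserving $\R^n$, and reads off from the two coordinate projections that $\tilde z_\nu=o(\abs{\tilde w_\nu})$ and $\tilde w_\nu=o(\abs{\tilde z_\nu})$ hold simultaneously, whence $1=\abs{\tilde z_\nu/\tilde w_\nu}\cdot\abs{\tilde w_\nu/\tilde z_\nu}\ra 0$. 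You work directly: the norm comparison in a chart with $J(p)$ orthogonal pins $\abs{z_\nu}/\abs{w_\nu}$ to a finite positive limit, and a subsequential limit of the normalised directions gives $\zeta\cdot a=c\,\omega\cdot b$, hence $a=\mu b$, with no contradiction argument and no similarity principle. For this lemma the Carleman machinery buys the paper nothing beyond what first-order differentiability of the smooth-up-to-the-boundary embeddings already gives (it would become essential only if $du(0)$ or $dv(0)$ were allowed to vanish, where the leading term is $az^k$); your route buys brevity and transparency, at the cost of a little asymptotic bookkeeping which you carry out correctly. One harmless slip: the constant in your limit identity should be $c=\norm{a}/\norm{b}$ rather than $\norm{b}/\norm{a}$ (take norms of both sides of $\zeta\cdot a=c\,\omega\cdot b$ using $\abs{\zeta}=\abs{\omega}=1$); this is immaterial since only $\mu\neq0$ is used.
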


\begin{proof}
  We will show that $du(0)$ and $dv(0)$ are collinear over $\C$,
  i.e.\ there exists a complex number $a\neq0$ such that $du(0)= dv(0)\cdot a$.
  Because the partial derivatives $u_s(0)$ and $v_s(0)$ are contained in $T_pL$
  the number $a$ must be real.
  
  First of all we can find a local chart about $p$
  in which the totally real boundary $L$ corresponds to $\R^n\subset\C^n$,
  and in which the almost complex structure $J$ is the multiplication by $i$
  at least for all real points,
  see \cite[Exercise B.4.10]{mcdsal04} or \cite[Proposition 3.3]{lazz00}.
  Therefore, it is enough to consider $J$-holomorphic maps
                                                            \begin{gather*}
                                                              u,v:
                                                              (\D^+,[-1,1],0)\lra(\C^n,\R^n,0),\\
                                                              u=(u_1,\ldots,u_n),
                                                              v=(v_1,\ldots,v_n),
                                                            \end{gather*}
  where $J$ is an almost complex structure on $\C^n$ such that $J=i$ on $\R^n$.

  Arguing by contradiction we suppose that $du(0)\set{\C}$ and $dv(0)\set{\C}$
  do not coincide.
  So they span a real $4$-dimensional subspace of $\C^n$,
  because the case of a real $3$-dimensional subspace is excluded by $i$-invariance.
  Now we wish to apply the relative Carleman similarity principle (see \cite[Theorem A.2]{abb04})
  to each of the solutions 
  $u$ and $v$ of
  \begin{gather*}
    w_s+J(w)w_t=0,\\
    w([-1,1])\subset\R^n,\\
    w(0)=0,
  \end{gather*}
  separately.
  Using a smooth endomorphism field which conjugates $J$ to $i$ and is the identity along $\R^n$
  we can assume that $u$ and $v$
  solve the linear equation
  \begin{gather*}
    w_s+iw_t+Aw=0,\\
    w([-1,1])\subset\R^n,\\
    w(0)=0,
  \end{gather*}
  where $A$ is a smooth real matrix valued function on $\D^+$ (depending on $u$, resp. $v$),
  see \cite[p.~24]{mcdsal04}.
  Now by the relative Carleman similarity principle we find
  \begin{itemize}
  \item
    a continuous complex matrix valued function $B$ of invertible matrices
    with $B(0)=\id$,
  \item
    and an analytic $\C^n$-valued function $f$ with $f(0)=0$,
  \end{itemize}
  both defined in a neighbourhood $U$ of $0$ in $\C$
  and both are real along $\R$,
  such that $w=Bf$ holds on $U\cap\set{\im(z)\geq0}$.
  In particular there exist $a,b\in\R^n\setminus\set{0}$
  such that
  \begin{gather*}
    u(z)=az+o(\abs{z})
    \quad\text{and}\quad
    v(z)=bz+o(\abs{z})
  \end{gather*}
  as $z$ tends to $0$ in $\D^+$.
  
  After composing with a complex linear transformation $A\in\gl(\C^n)$ such that $A(\R^n)=\R^n$
  we can further assume that $a=(1,0,\ldots,0)^T$ and $b=(0,\ldots,0,1)^T$ as column vectors in $\C^n$.
  Therefore, we have that the coordinate functions $\varphi\define u_1$ and $\psi\define v_n$
  are local diffeomorphisms about $\varphi(0)=0$ and $\psi(0)=0$, resp.
  Hence, by Taylor's formula we get
                                                            \begin{gather*}
                                                              u\circ\varphi^{-1}(z)=
                                                              \big(z,o(\abs{z})\big)
                                                              \qquad\text{in}\;\;
                                                              \C\times\C^{n-1},
                                                              \\
                                                              v\circ\psi^{-1}(z)=
                                                              \big(o(\abs{z}),z\big)
                                                              \qquad\text{in}\;\;
                                                              \C^{n-1}\times\C.
                                                            \end{gather*}
  Recall that we have assumed $u(z_{\nu})=v(w_{\nu})$ for our sequences $z_{\nu}\ra 0$ and $w_{\nu}\ra 0$.
  So setting $\tilde{z}_{\nu}\define\varphi(z_{\nu})$ and $\tilde{w}_{\nu}\define\psi(w_{\nu})$
  for all $\nu\in\N$ we find
                                                            \begin{gather*}
                                                              \tilde{z}_{\nu}=o(\abs{\tilde{w}_{\nu}})
                                                              \quad\text{and}\quad
                                                              \tilde{w}_{\nu}=o(\abs{\tilde{z}_{\nu}})
                                                            \end{gather*}
  for all $\nu\in\N$.
  But this implies
                                                            \begin{gather*}
                                                              \Babs{\frac{\tilde{z}_{\nu}}{\tilde{w}_{\nu}}}=
                                                              \Babs{\frac{o(\abs{\tilde{w}_{\nu}})}{\abs{\tilde{w}_{\nu}}}}
                                                              \ra0
                                                              \quad\text{and}\quad
                                                              \Babs{\frac{\tilde{w}_{\nu}}{\tilde{z}_{\nu}}}=
                                                              \Babs{\frac{o(\abs{\tilde{z}_{\nu}})}{\abs{\tilde{z}_{\nu}}}}
                                                              \ra0
                                                            \end{gather*}
  as $\nu$ tends to infinity.
  Consequently,
                                                            \begin{gather*}
                                                              1=
                                                              \frac{\tilde{z}_{\nu}}{\tilde{w}_{\nu}}
                                                              \frac{\tilde{w}_{\nu}}{\tilde{z}_{\nu}}
                                                              \ra0
                                                            \end{gather*}
  as $\nu\ra\infty$, which is a contradiction.
  This shows that $u$ and $v$ must be tangent at $0$
  which proves the claim.
\end{proof}

Lemma \ref{signofdelta} allows us to define the {\bf sign of the boundary intersection}
at the accumulation point $(0,0)$ to be the sign of the real number $\delta$,
i.e.\
                                                            \begin{gather}
                                                              \label{signofinter}
                                                              \delta_{u,v}\define\sign(\delta).
                                                            \end{gather}
Notice that this definition is intrinsic, i.e.\ is invariant under conformal reparametrisations
which respect the boundary condition.
Now it turns out that under the assumptions of the proceeding section
the images of the half-disc maps $u$ and $v$ coincide,
provided that the sign is positive.
This is to say that the unique continuation principle holds true.

\begin{lem}
  \label{isolembb}
  If the sign $\delta_{u,v}$ is positive
  then there exist open neighbourhoods $U$ and $V$ of $0$ in $\D^+$
  such that $u(U)=v(V)$.
  In fact, there exists an analytic diffeomorphism $\varphi$ between pointed neighbourhoods of $0$ in $\C$
  such that $v=u\circ\varphi$ hold true on $V$.
\end{lem}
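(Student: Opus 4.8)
The plan is to promote the tangency of Lemma~\ref{signofdelta} to a full coincidence of the two half-discs by a relative unique continuation argument, and then to read the reparametrisation off the \emph{positivity} of $\delta$. By Lemma~\ref{signofdelta} we have $du(0)=\delta\,dv(0)$ with $\delta>0$; since the conformal reparametrisation $z\mapsto\delta z$ preserves $\D^+$ (because $\delta>0$), I would replace $v$ by $v(\delta\,\cdot\,)$ and assume henceforth $du(0)=dv(0)\enifed a\neq0$. As in the proof of Lemma~\ref{signofdelta} I work in the chart in which $L=\R^n$ and $J=i$ along $\R^n$, and after a complex-linear change fixing $\R^n$ I may take $a=(1,0,\ldots,0)^T$. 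The goal is to produce an analytic $\varphi$ with $\varphi(0)=0$, $\varphi'(0)=1$, and $v=u\circ\varphi$ near $0$; the neighbourhoods in the statement are then $V$ and $U\define\varphi(V)$.

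First I would establish the relative unique continuation: the images of $u$ and $v$ agree near $p$. Because $u$ is an embedding with $du(0)=a\neq0$, the quantity $F$ measuring the failure of $v$ to factor through $u$ (the normal part of $v$ in coordinates adapted to the embedded curve $u(\D^+)$) satisfies a linear Cauchy--Riemann type equation obtained, exactly as in Lemma~\ref{signofdelta}, by conjugating $J$ to $i$ with an endomorphism field that is the identity along $\R^n$. This equation has the homogeneous form $\DB F+CF=0$ with $C$ continuous, and $F$ is real along $[-1,1]$ since $u([-1,1])$ and $v([-1,1])$ lie in $L=\R^n$. By construction $F$ vanishes along the intersection sequence, $F(w_\nu)=0$. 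The relative Carleman similarity principle cited in Section~\ref{sec:boundint} then writes $F=B_Fg$ with $B_F$ continuous and invertible, $B_F(0)=\id$, and $g$ analytic and real on $\R$; after Schwarz reflection across $\R$ the function $g$ is holomorphic on a full neighbourhood of $0$ and vanishes on the sequence $w_\nu\ra0$, which accumulates at the interior point $0$. Hence $g\equiv0$ by the identity theorem, so $F\equiv0$ and the image of $v$ lies in that of $u$ near $p$.

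With the images identified, I would finish by setting $\varphi\define u^{-1}\circ v$ where this makes sense near $0$. Endowing the $J$-holomorphic curve $u(\D^+)$ with the complex structure that makes $u$ biholomorphic turns the restriction of $J$ to the tangent spaces of $u(\D^+)$ into that structure, so $v$, now taking values in this curve, is holomorphic for it and $\varphi$ is analytic with $u\circ\varphi=v$ and $\varphi(0)=0$. Differentiating and using $du(0)=dv(0)$ gives $\varphi'(0)=1$. Here the hypothesis $\delta>0$ is decisive: it was exactly what allowed the normalising reparametrisation $z\mapsto\delta z$ to preserve $\D^+$, and it forces $\varphi'(0)$ to be real and positive (namely $1/\delta$ before that normalisation). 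Since $v([-1,1])\subset L$ makes $\varphi$ real along $\R$, a map that is real on $\R$ with positive derivative at $0$ sends a half-disc neighbourhood $V\subset\D^+$ of $0$ onto a half-disc neighbourhood $U=\varphi(V)\subset\D^+$. Therefore $v(V)=u\big(\varphi(V)\big)=u(U)$, which is the asserted $u(U)=v(V)$ together with $v=u\circ\varphi$.

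The main obstacle is the unique continuation step in the presence of the boundary. The delicate points are to realise the defect $F$ as an honest solution of a \emph{homogeneous} linear equation up to the corner $0$ and to secure real boundary values, so that the \emph{relative} Carleman similarity principle --- rather than its interior counterpart --- applies; this is precisely the configuration for which that principle is designed, and it is what rules out the self-matching example $z\mapsto z$, $z\mapsto-z$, in which $\delta<0$ forces $\varphi'(0)<0$ and $\varphi$ interchanges the two half-discs. Once $F$ is written as $B_Fg$ with $g$ honestly holomorphic and real on $\R$, the Schwarz reflection and the identity theorem close the argument without further difficulty.
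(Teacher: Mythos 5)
Your overall strategy is the same as the paper's: flatten $u$ by adapted coordinates, show the ``normal part'' of $v$ solves a homogeneous perturbed Cauchy--Riemann equation with real boundary values, kill it with the relative Carleman similarity principle and the identity theorem, and then read off $\varphi$ from the tangential component, using positivity to see that $\varphi$ preserves the upper half-plane. However, there is a genuine gap at the central step, namely the assertion that the normal part $F$ satisfies a \emph{homogeneous} equation $\DB F+CF=0$ ``exactly as in Lemma~\ref{signofdelta}, by conjugating $J$ to $i$ with an endomorphism field that is the identity along $\R^n$.'' That conjugation produces a homogeneous equation for the \emph{full} map $v$, not for its normal component: if $v$ solves $v_s+iv_t+Av=0$ and you project to the directions normal to the flattened curve, the term $Av$ contributes $A_{21}v_1$, an inhomogeneous term that does not vanish with $\tilde v=(v_2,\dots,v_n)$. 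To obtain a genuinely homogeneous equation for $\tilde v$ one must work in the chart of Lemma~\ref{nicechart}, where $u(z)=(z,0,\dots,0)$ and $J=i$ along $\D^+\times\set{0}$, and one must know that $J(v_1(z),0)=i$, i.e.\ that the tangential component $v_1$ takes values in $\D^+$ --- only then does the identity $J(v)-i=\int_0^1 D_2J(v_1,\tau\tilde v)\set{\tilde v}\,d\tau$ turn the zeroth-order term into something linear in $\tilde v$.

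This is precisely where the hypothesis $\delta_{u,v}>0$ enters in the paper: $\partial_t v_1(0)=\tfrac1\delta i$ has positive imaginary part, so $t\mapsto\im\big(v_1(s+it)\big)$ is increasing and $v_1$ maps $\D^+$ into the closed upper half-plane, which is exactly the region where $J$ is known to be standard along the curve. You instead locate the use of positivity in the cosmetic reparametrisation $z\mapsto\delta z$ and in the final observation that $\varphi'(0)>0$ makes $\varphi(V)\subset\D^+$ (that last point is correct and matches the paper). As written, your unique-continuation argument would apply verbatim when $\delta<0$, where the conclusion is false for general almost complex $J$: there the correct statement is only the smooth gluing of Lemma~\ref{smoothmatching}, obtained by a different (infinite-order vanishing along $[-1,1]$) argument. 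A related minor point: $\varphi\define u^{-1}\circ v$ presupposes that $v$ already maps into $u(\D^+)$, which is part of what is being proved; the paper avoids this by setting $\varphi\define v_1$ outright and only afterwards checking that $v_1$ lands in $\D^+$.
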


In order to prove this lemma
we will need a particular coordinate system about the intersection point
making one of the holomorphic discs flat.
The following result can be obtained as in \cite[Lemma 2.4.2]{mcdsal04}:

\begin{lem}
  \label{nicechart}
  Let $w:(\D^+,[-1,1],0)\ra(M,L,p)$ be a $J$-holomorphic embedding.
  Then there exist a neighbourhood $U$ of $0$ in $\C^n$
  and an embedding
                                                            \begin{gather*}
                                                              \Phi:(U,U\cap\R^n,0)\lra(M,L,p)
                                                            \end{gather*}
  such that
                                                            \begin{gather*}
                                                              \Phi^{-1}\circ w(z)=(z,0,\ldots,0)
                                                              \quad\text{in}\;\;
                                                              \C\times\C^{n-1}
                                                            \end{gather*}
  for all $z\in\D^+$ and $\tilde{J}\define\Phi^*J= d\Phi^{-1}\circ J\circ d\Phi$ satisfies
                                                            \begin{gather}
                                                              \label{jisstandard}
                                                              \tilde{J}(z,0)=i
                                                            \end{gather}
  for all $z\in\D^+$.
\end{lem}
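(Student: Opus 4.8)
The plan is to reduce everything to the model $(\C^n,\R^n,0)$ and then standardise the disc and the almost complex structure in two successive steps, imitating the interior normalisation \cite[Lemma 2.4.2]{mcdsal04} while keeping track of the boundary condition. First I would fix a preliminary chart: as recalled in the proof of Lemma \ref{signofdelta} (see \cite[Exercise B.4.10]{mcdsal04} or \cite[Proposition 3.3]{lazz00}), there is an embedding $\alpha$ of a neighbourhood of $p$ onto a neighbourhood of $0$ in $\C^n$ with $\alpha(p)=0$, $\alpha(L)=\R^n$, and $\alpha_*J=i$ along $\R^n$. Writing $\bar w\define\alpha\circ w$ and $J'\define\alpha_*J$, the map $\bar w\co(\D^+,[-1,1],0)\ra(\C^n,\R^n,0)$ is a $J'$-holomorphic embedding with $J'=i$ on $\R^n$.

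Next I would straighten the disc. The image $\bar w(\D^+)$ is an embedded half-surface whose boundary arc $\bar w([-1,1])$ lies in $\R^n$. Choosing a smooth frame of a complement to its tangent bundle whose vectors are tangent to $\R^n$ along $\bar w([-1,1])$, one builds a diffeomorphism $\beta$ of a neighbourhood of $0$ with $\beta(z,0)=\bar w(z)$ for $z\in\D^+$, $\beta(0)=0$, and $\beta(\R^n)\subset\R^n$, the last property coming from the choice of frame along the boundary. Put $J''\define\beta^*J'$. Since $z\mapsto(z,0)=\beta^{-1}\circ\bar w(z)$ is $J''$-holomorphic, the line $W\define\C\times 0$ is $J''$-invariant along the curve and $J''(z,0)=i$ on $W$ for every $z\in\D^+$; because $\beta$ preserves $\R^n$, the subspace $\R^n$ stays totally real for $J''$.

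Finally I would normalise the structure along the curve by pointwise linear algebra followed by an assembly. For each $z\in\D^+$ I want a real-linear isomorphism $\Psi_z$ of $\C^n=\C\times\C^{n-1}$ that is the identity on $W$ and satisfies $K_z\Psi_z=\Psi_z\,i$, where $K_z\define J''(z,0)$; in addition, for $z\in[-1,1]$ the map $\Psi_z$ must obey $\Psi_z(\R^n)\subset\R^n$. Such $\Psi_z$ exists: since $W$ is a common complex line with $K_z\restrict_W=i$, one extends $\id$ on $W$ by any complex-linear isomorphism of $0\times\C^{n-1}$ onto a $K_z$-complex complement of $W$. For real $z$ one chooses this complement to contain the maximal totally real subspace $0\times\R^{n-1}$ — possible because $\R^n$ is totally real for $K_z$ — and maps $0\times\R^{n-1}$ into it, a complex-linear isomorphism carrying one maximal totally real subspace to another then forcing $\Psi_z(\R^n)\subset\R^n$. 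Selecting the $\Psi_z$ smoothly in $z$ and setting $\gamma(z,\zeta)\define(z,0)+\Psi_z(0,\zeta)$, one gets $\gamma(z,0)=(z,0)$, $d\gamma(z,0)=\Psi_z$, $\gamma(\R^n)\subset\R^n$, and $\gamma^*J''(z,0)=i$. Then $\Phi\define\alpha^{-1}\circ\beta\circ\gamma$ satisfies $\Phi^{-1}\circ w(z)=(z,0)$, $\Phi(U\cap\R^n)\subset L$, and $\Phi^*J(z,0)=\gamma^*J''(z,0)=i$, as required.

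The main obstacle is precisely this third step along the boundary: one must standardise $J''$ while simultaneously fixing the holomorphic curve — which forces $\Psi_z=\id$ on its tangent line $W$ — and preserving the totally real $\R^n$ on $[-1,1]$. These constraints are compatible only because $\R^n$ remains totally real for $J''$, which is what lets one match $0\times\R^{n-1}$ to a totally real subspace inside a $K_z$-complex complement of $W$ by a complex-linear isomorphism; note that at real points $J''(z,0)$ need \emph{not} already equal $i$, so $\gamma$ cannot be taken trivial there. A secondary technical point is the smooth dependence of $\Psi_z$ on $z$ and the matching of the unconstrained interior choice (for $\im z>0$ the point $(z,0)$ does not lie in $\R^n$) with the reality-constrained boundary choice, which is handled by a standard selection argument over the connected fibres of admissible $\Psi_z$.
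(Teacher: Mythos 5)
The paper offers no proof of Lemma \ref{nicechart} at all, only the remark that it ``can be obtained as in [McDuff--Salamon, Lemma 2.4.2]''; your argument is precisely that adaptation, carried out with the correct extra bookkeeping at the boundary (a preliminary chart with $L=\R^n$ and $J=i$ along $\R^n$, straightening the half-disc by a normal frame tangent to $\R^n$ along $[-1,1]$, and then normalising $J$ along the curve by fibrewise intertwiners $\Psi_z$ that fix the tangent line $W$ and preserve $\R^n$ at real points). The construction is correct --- in particular the decomposition $\C^n=\R^n\oplus K_z\R^n=W\oplus(N\oplus K_zN)$ for a real complement $N$ of $\R\times 0$ in $\R^n$ is exactly what makes the boundary-constrained choice of $\Psi_z$ possible --- so I regard this as the same approach the paper intends, with the details filled in.
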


\begin{proof}[{\it Proof of Lemma \ref{isolembb}}]
  We will follow the line reasoning from \cite[p.~90]{hof99}.
  By Lemma \ref{nicechart} we can assume that our $J$-holomorphic half-disc maps $u,v$ are given by
                                                            \begin{gather*}
                                                              u,v:(\D^+,[-1,1],0)\lra(\C^n,\R^n,0)\\
                                                              u=(u_1,\ldots,u_n),v=(v_1,\ldots,v_n),
                                                            \end{gather*}
  such that $u(z)=(z,0,\ldots,0)$ for all $z\in\D^+$,
  where $J$ is an almost complex structure on $\C^n$ such that $J=i$ on $\D^+\times\set{0}$.
  By our assumption there is a positive real number $\delta>0$
  such that $du(0)=\delta dv(0)$.
  So we find that
                                                            \begin{gather*}
                                                              \partial_tv_1(0)=
                                                              \tfrac{1}{\delta}\partial_tu_1(0)=
                                                              \tfrac{1}{\delta}i
                                                            \end{gather*}
  has positive imaginary part.
  W.l.o.g. we can assume that $\im\big(\partial_tv_1(z)\big)>0$ for all $z\in\D^+$.
  Therefore, the function $t\mapsto\im\big(v_1(s+it)\big)$ is monotone increasing for all $s\in[-1,1]$
  and, hence, $v_1(z)\in\set{\im(z)>0}$ for all $z\in\set{\im(z)>0}\cap\D^+$.
  Consequently, we have that
                                                            \begin{gather}
                                                              \label{v1standard}
                                                              J(v_1,0)=i.
                                                            \end{gather}
  If we write $\tilde{v}$ for the tuple $(v_2,\ldots,v_n)$
  in the following
  we get that
                                                            \begin{gather*}
                                                              \int_0^1D_2J(v_1,\tau\tilde{v})\set{\tilde{v}}d\tau=
                                                              \int_0^1\partial_{\tau}J(v_1,\tau\tilde{v})d\tau=
                                                              J(v_1,\tilde{v})-J(v_1,0)=
                                                              J(v)-i
                                                            \end{gather*}
  on $\D^+$.
  By $J$-holomorphicity we get that
                                                            \begin{gather}
                                                              \label{cspforbwoproj}
                                                              0=
                                                              \partial_sv+J(v)\partial_tv=
                                                              \partial_sv+i\partial_tv+B\tilde{v},
                                                            \end{gather}
  where
                                                            \begin{gather*}
                                                              B\define
                                                              \left(
                                                                \int_0^1D_2J(v_1,\tau\tilde{v})\set{\,.\,}d\tau
                                                                \cdot\partial_tv
                                                              \right)
                                                            \end{gather*}
  is an element of $C^{\infty}\big(\D^+,\Hom_{\R}(\C^{n-1},\C^n)\big)$.
  Setting $\tilde{B}\define\pr_{\C^{n-1}}B$, which yields an element in $C^{\infty}\big(\D^+,\Ed_{\R}(\C^{n-1})\big)$,
  we get finally
                                                            \begin{gather*}
                                                              0=
                                                              \partial_s\tilde{v}+i\partial_t\tilde{v}+\tilde{B}\tilde{v}.
                                                            \end{gather*}
  The relative Carleman similarity principle (see \cite[Theorem A.2]{abb04}) implies
  that there exist $0<\varepsilon\leq1$,
  a complex analytic function $f:\big(\D_{\varepsilon},[-\varepsilon,\varepsilon],0\big)\ra\big(\C^{n-1},\R^{n-1},0\big)$,
  and $C\in C^0\big(\D_{\varepsilon},\gl_{\R}(\C_{n-1})\big)$,
  such that
                                                            \begin{gather*}
                                                              \tilde{v}(z)=C(z)f(z)
                                                            \end{gather*}
  for all $z \in \D_{\varepsilon}^+$.
  By assumption we have that $\tilde{v}(w_{\nu})=0$ for all $\nu\in\N$, where
  $w_{\nu}\ra 0$ in $\D^+\setminus\set{0}$.
  This implies $f(w_{\nu})=0$ for all $\nu\in\N$ and, hence, $f=0$.
  Consequently, $\tilde{v}=0$.
  Thus there exists an open neighbourhood $V$ of $0$ such that
  $\varphi\define v_1$ is analytic on the interior of $V$,
  see \eqref{cspforbwoproj},
  and sends real numbers to real numbers.
  Moreover, $v=u\circ\varphi$ on $V$.
  Setting $U=\varphi(V)$ this gives the claim.
\end{proof}


\section{Mixed intersections\label{section:mixed}}

The aim of this section is to finish with the proof of Theorem \ref{annulusproperty}
which says that a simple holomorphic disc has the annulus property.
First of all we will use Lemma \ref{isolembb} to show
that the set of accumulation points $R_1$
can not contain a subset of the form $c([-1,1])$ for any embedding $c:[-1,1]\ra\A$.

\begin{proof}[{\it Proof of Lemma \ref{noembeddingintor1}}]
  Arguing by contradiction we suppose there is an embedding $c:[-1,1]\ra\A$ with $c([-1,1])\subset R_1$.
  Set $c(0)=z^*$ and take a positive real number $\delta$ with $B_{\delta}(z^*)\subset B^*_{\varepsilon}(z_0)$.
  By shrinking $\delta>0$ we can assume that $B_{\delta}(z^*)\setminus c([-1,1])$
  has exactly two components $\Sigma_1$ and $\Sigma_2$.
  By uniformisation
  there are analytic embeddings
                                                            \begin{gather*}
                                                              \varphi_j:\big(\D^+,\partial\D^+,0\big)
                                                              \lra
                                                              \big(\Sigma_j,c([-1,1]),z^*\big),
                                                            \end{gather*}
  one for each $j=1,2$.
  Notice that the images of the vector $\partial_s$
  under the differentials $d\varphi_1$ and $d\varphi_2$ point in opposite directions in $T_{z^*}c([-1,1])$.
  Moreover, by assumption the $J$-holomorphic half-discs
                                                            \begin{gather*}
                                                              u_j\define u\circ\varphi_j:
                                                              \big(\D^+,\partial\D^+\big)
                                                              \lra
                                                              (M,L)
                                                            \end{gather*}
  intersect the given $J$-holomorphic disc $u$ along the boundary,
  i.e.\
                                                            \begin{gather*}
                                                              u_j(\partial\D^+)\subset
                                                              u(\partial\D).
                                                            \end{gather*}
  Because $u\restrict_{B^*_{\varepsilon}(z_0)}$ is an embedding and $z^*\notin\crt(u)$
  we may assume (after shrinking $\delta>0$ again if necessary)
  that this is a local intersection of $3$ embedded $J$-holomorphic half-discs
  along their boundaries.
  In particular,
  for one $j=1,2$ the sign of boundary intersection $\delta_{u,u_j}$ must be positive.
  So we derived from Lemma \ref{isolembb} that $2$ of the branches of $u$ along the boundary must overlap.
  But this means that $u$ can not be simple.
  This is a contradiction.
\end{proof}

As we already remarked we obtained in fact that
the compact set $S_1$ is nowhere dense in $\A$ and
does not contain embedded arcs.
While the set of interior intersections
                                                            \begin{gather*}
                                                              S_1^{\inter}\define
                                                              \pr_1\!\big(S_{\inter}(u)\big)\cap\A=
                                                              \bigset{
                                                                x\in\A
                                                                \suchthatb
                                                                \exists
                                                                y\in B_1(0):
                                                                u(x)=u(y)
                                                              }
                                                            \end{gather*}
is discrete it seems that the set of {\bf mixed self-intersection points of} $u$
                                                            \begin{gather*}
                                                              S_{\mix}(u)\define
                                                              S(u)\cap
                                                              \big(B_1(0)\times\partial\D\big)
                                                            \end{gather*}
is rather complicated.
In order to understand the topology of the set
                                                            \begin{gather*}
                                                              S_1=S_1^{\mix}\sqcup S_1^{\inter},
                                                            \end{gather*}
where
                                                            \begin{gather*}
                                                              S_1^{\mix}\define
                                                              \pr_1\!\big(S_{\mix}(u)\big)\cap\A=
                                                              \bigset{
                                                                x\in\A
                                                                \suchthatb
                                                                \exists
                                                                y\in\partial\D:
                                                                u(x)=u(y)
                                                              },
                                                            \end{gather*}
we need more information about the mixed self-intersections.
  
First of all
we will extract an important subclass of accumulations of self-intersection points.
Namely, we consider the set of  {\bf mixed self-intersections of virtual boundary type}
                                                            \begin{gather*}
                                                              \RR^{\virt}=
                                                              \acc\big(S(u)\cap\A\times\partial\D\big)
                                                            \end{gather*}
and denote the projection to the first coordinate by 
                                                            \begin{gather*}
                                                              R_1^{\virt}\define\pr_1\!\RR^{\virt}.
                                                            \end{gather*}
Observe that $R_1^{\virt}=\acc(S_1^{\mix})$.
Its complement,
the set of accumulation points which only can be reached from the interior,
is denoted by
                                                            \begin{gather*}
                                                              \tilde{R}_1\define R_1\setminus R_1^{\virt}.
                                                            \end{gather*}
The reason for these definitions is the following key property:
By \eqref{disjointunionofintpoints} we have $R_1\subset S_1^{\mix}$ which implies
                                                            \begin{gather}
                                                              \label{keyproperty}
                                                              \acc(R_1)\subset R_1^{\virt}.
                                                            \end{gather}
In order to prove the theorem
we are going to construct a covering of $S_1$
using the decomposition of the accumulation points
                                                            \begin{gather*}
                                                               R_1=\tilde{R}_1\sqcup R_1^{\virt}
                                                            \end{gather*}
into $\tilde{R}_1$ and $R_1^{\virt}$.

\begin{lem}
  \label{disjboundlem}
  Consider a point $z^*\in R_1^{\virt}$.
  Then for any $\rho>0$ there exists $\delta\in(0,\rho)$ such that
                                                            \begin{gather*}
                                                              S_1\cap\partial B_{\delta}(z^*)=
                                                              \emptyset.
                                                            \end{gather*}
\end{lem}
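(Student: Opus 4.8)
The goal is to show that a point $z^*\in R_1^{\virt}$ can be surrounded by arbitrarily small circles $\partial B_\delta(z^*)$ that avoid $S_1$ entirely. Recall $S_1 = S_1^{\mix}\sqcup S_1^{\inter}$, where $S_1^{\inter}$ is discrete (by Theorem \ref{mwmagerthm}) and $S_1^{\mix}$ is the troublesome part whose accumulation set is $R_1^{\virt}=\acc(S_1^{\mix})$. The plan is to find a radius $\delta$ avoiding each of these two pieces separately and then combine. The main leverage comes from the key property \eqref{keyproperty}, namely $\acc(R_1)\subset R_1^{\virt}$, which says the accumulation points of the full accumulation set $R_1$ already lie inside $R_1^{\virt}$.

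**Handling the interior part.** Since $S_1^{\inter}$ is discrete in $\A$, it meets the compact disc $\overline{B_\rho(z^*)}$ in only finitely many points; hence for all but finitely many radii $r\in(0,\rho)$ the circle $\partial B_r(z^*)$ misses $S_1^{\inter}$. So the interior self-intersections impose no obstruction, and I may restrict attention to avoiding $S_1^{\mix}$.

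**Handling the mixed part via the accumulation structure.** This is where \eqref{keyproperty} does the work. First I would like to arrange that $z^*$ is \emph{isolated} in $R_1^{\virt}$ so that a small punctured neighbourhood contains no other point of $R_1^{\virt}$. If $z^*$ were an accumulation point of $R_1^{\virt}\subset R_1$, then $z^*\in\acc(R_1)$, which is consistent with \eqref{keyproperty}, so isolation is not automatic and I expect this to be the main obstacle; one must instead argue with the accumulation structure directly rather than assuming isolation. The cleaner route is: choose $\rho$ small enough that $\overline{B_\rho(z^*)}$ contains no point of $R_1^{\virt}$ other than $z^*$ itself \emph{if} $z^*$ is isolated in $R_1^{\virt}$; when it is not, one uses that $R_1$ is compact and nowhere dense and that, by \eqref{keyproperty}, any accumulation of the mixed points $S_1^{\mix}$ inside the punctured disc must itself accumulate only on $R_1^{\virt}$. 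The crucial consequence is that $S_1^{\mix}$ cannot accumulate on a \emph{whole circle} $\partial B_\delta(z^*)$: if it did, the set of accumulation points on that circle would give points of $R_1^{\virt}$ forming a nondegenerate arc, contradicting Lemma \ref{noembeddingintor1} (no embedded arc lies in $R_1$, hence none in $R_1^{\virt}\subset R_1$). Therefore, for each $r$, the circle $\partial B_r(z^*)$ meets $\overline{S_1^{\mix}}$ in a nowhere-dense subset of the circle.

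**Extracting the good radius.** Finally I would run a measure- or category-theoretic counting argument on the radii. Consider the map $S_1^{\mix}\cap B_\rho(z^*)\to(0,\rho)$ sending $x\mapsto |x-z^*|$. Because $S_1^{\mix}$ has no interior point in $\A$ (it is nowhere dense) and, by Lemma \ref{noembeddingintor1}, contains no embedded arc, its image of radii cannot be all of $(0,\rho)$: if every radius $r$ produced a point of $S_1^{\mix}$ on $\partial B_r(z^*)$, a selection/connectedness argument together with compactness would build an arc in $S_1$ (or exhibit a circle's worth of accumulation giving an arc in $R_1^{\virt}$), again contradicting Lemma \ref{noembeddingintor1}. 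Hence the set of radii $r\in(0,\rho)$ with $\partial B_r(z^*)\cap S_1^{\mix}\neq\emptyset$ is not all of $(0,\rho)$; intersecting with the co-finite set of radii avoiding $S_1^{\inter}$ yields a radius $\delta\in(0,\rho)$ with $\partial B_\delta(z^*)\cap S_1=\emptyset$, as required. The delicate point to pin down rigorously is the passage from "every radius is hit" to "an embedded arc exists," which is precisely where the structure theorem of the previous section (Lemma \ref{noembeddingintor1}) must be invoked with care.
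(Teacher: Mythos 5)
There is a genuine gap, and it sits exactly where you flagged your own uncertainty. Your reduction ``if every circle $\partial B_r(z^*)$ met $S_1^{\mix}$, a selection/compactness argument would produce an embedded arc in $S_1$ or in $R_1^{\virt}$, contradicting Lemma \ref{noembeddingintor1}'' is not a valid point-set implication. A compact, nowhere dense, totally path-disconnected subset of the plane can meet every circle around $z^*$: the radial projection of a compact totally disconnected set can be a full interval (Cantor-function--type phenomena), and there are even continua joining the two boundary circles of an annulus that contain no embedded arc (pseudo-arcs). So the only information you have extracted about $S_1^{\mix}$ --- closed, nowhere dense, no embedded arcs, accumulation set contained in $R_1^{\virt}$ --- is simply not enough to find a circle missing it. The paper closes this gap with an additional structural input that your proposal never produces, namely Lemma \ref{typetwo}: since $z^*\in R_1^{\virt}$ one has $u(z^*)\in L$, the fibre $u^{-1}\big(u(z^*)\big)\cap\partial\D$ is finite, and flattening each of the finitely many boundary branches via Lemma \ref{nicechart} together with the relative Carleman similarity principle shows that \emph{every} point of $\overline{B_\delta(z^*)}$ mapped into $L$ lies on one of finitely many embedded analytic arcs $\gamma_1,\ldots,\gamma_k$ crossing $\partial B_\delta(z^*)$ transversally. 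Hence $S_1^{\mix}\cap\overline{B_\delta(z^*)}\subset\Gamma=\gamma_1\cup\cdots\cup\gamma_k$, and only now does Lemma \ref{noembeddingintor1} (nowhere density of $S_1^{\mix}$ \emph{inside each one-dimensional arc}) yield a whole sub-annulus free of $S_1^{\mix}$. This one-dimensionalisation is the heart of the proof and is missing from your argument; note also that you never use the hypothesis $z^*\in R_1^{\virt}$, which is what makes it available.

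A secondary, fixable error: $S_1^{\inter}$ is discrete in $\A$ but not closed --- it may accumulate on $S_1^{\mix}$ --- so it need not be finite in $\overline{B_\rho(z^*)}$, and ``all but finitely many radii'' is unjustified. Countability of a discrete set would still give co-countably many good radii, so this piece survives; the paper instead observes $\acc\big(S_1^{\inter}\cap B_{\delta}(z^*)\big)\subset S_1^{\mix}\cap\Gamma$ and concludes finiteness of $S_1^{\inter}$ in the sub-annulus already cleared of $S_1^{\mix}$.
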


In order to prove this fact we consider $J$-holomorphic embeddings 
                                                            \begin{gather*}
                                                              u:\big(B_1(0),0\big)\lra(M,p)
                                                              \quad\text{and}\quad
                                                              v:\big(\D^+,[-1,1],0\big)\lra(M,L,p)
                                                            \end{gather*}
such that there are sequences $z_{\nu}\ra 0$ in $B_1(0)\setminus\set{0}$
and $w_{\nu}\ra 0$ in $\partial\D^+\setminus\set{0}$ of complex numbers
such that $u(z_{\nu})=v(w_{\nu})$ for all $\nu\in\N$.
We remark that the condition on the points $w_{\nu}$ to lie on the boundary can be interpreted as a
{\bf virtual boundary condition}.

\begin{lem}
  \label{typetwo}
  For all $\varepsilon>0$
  there exist $\delta\in (0,\varepsilon)$ and an embedding
                                                            \begin{gather*}
                                                              c:\big([-1,1],0\big)\lra\big(\D_{\delta},0\big),
                                                            \end{gather*}
  where $\D_{\delta}\define\overline{B_{\delta}(0)}$,
  such that the following conditions are satisfied:
  \begin{itemize}
  \item [(i)]
    the points $z_{\nu}$ lie on the image $\gamma\define\Image(c)$
    provided $z_{\nu}\in\D_{\delta}$ for all $\nu\in\N$,
  \item [(ii)]
    the intersection $\gamma\cap\partial B_{\delta}(0)$ equals $\set{c(\pm1)}$ and is transverse,
  \item [(iii)]
    each point $\zeta\in\D_{\delta}$ with $u(\zeta)\in L$
    lies in fact on $\gamma$.
  \end{itemize}
\end{lem}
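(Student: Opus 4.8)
The plan is to analyze the local structure at a mixed self-intersection of virtual boundary type, where an interior holomorphic disc $u$ meets a boundary half-disc $v$ tangentially along a common boundary point $p$. The key object to produce is the arc $\gamma$ which should capture \emph{all} preimages near $0$ of points lying in $L$; conditions (i)--(iii) together say that $\gamma$ is precisely the locus $u^{-1}(L)$ near $0$ and that it contains the approaching sequence $z_\nu$. My strategy is to reduce to a normal form via Lemma \ref{nicechart} applied to the interior disc $u$: after composing with the chart $\Phi$ we may assume $M=\C^n$, $L=\R^n$, $p=0$, and $u(z)=(z,0,\ldots,0)$ with $J=i$ along $\D\times\set{0}$. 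In these coordinates the problem becomes purely one of understanding the set $\setlr{\zeta\in B_\delta(0)\suchthat u(\zeta)\in\R^n}$, which is exactly $u_1^{-1}(\R)$ since the remaining coordinates of $u$ vanish identically.

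The central observation is that, because $u(z)=(z,0,\ldots,0)$ in these coordinates, we have $u(\zeta)\in L=\R^n$ if and only if the first coordinate is real, i.e.\ $\im(\zeta)=0$. Thus $u^{-1}(L)\cap B_\delta(0)$ is simply the real segment $(-\delta,\delta)$, which is manifestly an embedded arc through $0$. I would therefore take $c:[-1,1]\to\D_\delta$ to be the affine parametrisation $c(t)=\delta t$ of this segment (pulled back through $\Phi$ to the original source), so that $\gamma=\Image(c)=[-\delta,\delta]$. Conditions (ii) and (iii) are then immediate: the segment meets $\partial B_\delta(0)$ transversally at $\pm\delta$, and every $\zeta$ with $u(\zeta)\in L$ lies on $\gamma$ by the above characterisation. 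For condition (i), the approaching interior points $z_\nu$ satisfy $u(z_\nu)=v(w_\nu)$ with $w_\nu\in\partial\D^+$, so $u(z_\nu)=v(w_\nu)\in L$; hence each $z_\nu\in u^{-1}(L)=\gamma$, as required.

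The subtlety I expect to be the main obstacle is that the chart from Lemma \ref{nicechart} flattens $u$ only along the \emph{closed} half-disc image, but here $u$ is an interior embedding near $0$, not a half-disc; I would need the interior analogue, normalising $u$ to $(z,0,\ldots,0)$ on a full interior neighbourhood with $J=i$ along $\D\times\set{0}$, which follows by the same construction as in \cite[Lemma 2.4.2]{mcdsal04}. The second delicate point is that $L$ is only \emph{totally real}, not complex, so after the chart $L$ corresponds to $\R^n$ but the identification $u^{-1}(L)=\set{\im(\zeta)=0}$ depends on having arranged $J=i$ precisely along the image of $u$ so that the first coordinate function is honestly the identity; away from that image $J$ may differ from $i$, but this does not affect the argument since we only evaluate $u$ itself. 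Once these normalisations are in place the remaining verifications are routine, and the existence of a suitable small $\delta\in(0,\varepsilon)$ is guaranteed by choosing $\delta$ small enough that the flattening chart is defined on $B_\delta(0)$.
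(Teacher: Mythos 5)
Your normalisation is where the argument breaks down. Lemma \ref{nicechart} (and \cite[Lemma 2.4.2]{mcdsal04}) flattens a holomorphic embedding \emph{together with its totally real boundary condition}: the chart $\Phi$ sends $U\cap\R^n$ to $L$ precisely because the half-disc being flattened maps $[-1,1]$ into $L$. The map $u$ in this lemma, however, is an \emph{interior} embedding $u:(B_1(0),0)\ra(M,p)$ with no boundary condition on $L$; you can flatten $u$ to $(z,0,\ldots,0)$, or you can arrange $L=\R^n$, but you cannot in general do both simultaneously. If you could, then $u^{-1}(L)=\set{\zeta\suchthat(\zeta,0,\ldots,0)\in\R^n}=\R\cap B_{\delta}(0)$ would automatically be an embedded arc --- but that is essentially the content of the lemma, not a normal form one may assume. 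Indeed, for a generic interior disc through $p\in L$ the set $u^{-1}(L)$ is \emph{not} a curve: take $M=\C^2$, $L=\R^2$, $u(z)=(z,iz)$; then $u^{-1}(L)=\set{0}$. What forces $u^{-1}(L)$ to be one-dimensional here is exactly the hypothesis that the intersection points $u(z_{\nu})=v(w_{\nu})\in L$ accumulate at $0$, and your argument never uses that hypothesis for this purpose --- it only invokes it afterwards to check (i).

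The paper instead flattens $v$, the map that does carry the boundary condition, so that $L=\R^n$ and $v(z)=(z,0,\ldots,0)$, and writes $u=(u_1,\tilde{u})$ with respect to $\C\times\C^{n-1}$ in these coordinates. Then $u(z_{\nu})=v(w_{\nu})$ with $w_{\nu}$ real gives $\tilde{u}(z_{\nu})=0$ along a sequence $z_{\nu}\ra0$; since $du(0)$ is complex linear (here $J(p)=i$), this forces $d\tilde{u}(0)=0$, hence $du_1(0)\neq0$ and $u_1$ is a local diffeomorphism near $0$. One then sets $\gamma\define u_1^{-1}(\R)\cap\D_{\delta}$ and $c\define u_1^{-1}\restrict_{\R}$; property (iii) follows because $u(\zeta)\in L=\R^n$ implies $u_1(\zeta)\in\R$, (i) follows since $u_1(z_{\nu})=w_{\nu}\in\R$, and (ii) holds after shrinking $\delta$. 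This tangency step, $d\tilde{u}(0)=0$ deduced from the accumulating intersections, is the missing idea in your proposal; without it there is no embedded arc to write down.
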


\begin{proof}
  By Lemma \ref{nicechart} we can assume that $v(z)=(z,0,\ldots,0)$ in $\C\times\C^{n-1}$ for all $z\in\D^+$
  and that $u(z_{\nu})=v(w_{\nu})\in\R^n$.
  Writing $u=(u_1,\tilde{u})$ with respect to the splitting $\C\times\C^{n-1}$ we see
  that $u_1(z_{\nu})\in\R$ and $\tilde{u}(z_{\nu})=0$ for all $\nu\in\N$.
  Hence, $d\tilde{u}(0)=0$.
  Therefore, we can assume that the restricted map
  $u_1:(\D_{\delta},0)\ra (\C,0)$ is an embedding for some $\delta>0$.
  Therefore, in a neighbourhood of zero we can invert $u_1$,
  i.e.\ $\varphi=u_1^{-1}$ exists.
  By shrinking $\delta>0$ again if necessary,
  we can assume that $\R\cap u_1(\D_{\delta})$ is connected
  with transverse intersections at the boundary.
  We get the first claim by considering $c\define\varphi\restrict_{\R}$.
  
  By making $\delta>0$ smaller if necessary we can assume that $u(\D_{\delta})$
  is contained in the domain $(V,p)$
  of that chart map we used to flatten $v$, see Lemma \ref{nicechart}.
  We can assume that $V\cap L$ is connected, i.e.\ in the chart we have $L=\R^n$.
  Hence, $\zeta\in\D_{\delta}$ with $u(\zeta)\in L$ implies $u_1(\zeta)\in\R$
  and, therefore, $\zeta=\varphi\circ u_1(\zeta)\in\gamma$.
  This proves the third claim.
\end{proof}

  \begin{proof}[{\it Proof of Lemma \ref{disjboundlem}}]
    By Proposition \ref{pointsfinite} the cardinality of the set $u^{-1}\big(u(z^*)\big)\cap\partial\D$ is finite,
    say $k\in\N$.
    If we compose our holomorphic map $u$
    with local parametrisations of $\D$ about the $k$ intersection points
    we find holomorphic maps $v_1,\ldots,v_k$ defined on half-discs
    to which we can apply Lemma \ref{typetwo}.
    So we find $\delta>0$ as small as we wish
    and an embeddings
                                                            \begin{gather*}
                                                              c_1,\ldots,c_k:
                                                              \big([-1,1],0\big)
                                                              \lra
                                                              \big(\overline{B_{\delta}(z^*)},z^*\big),
                                                            \end{gather*}
    such that (for each $j=1,\ldots,k$) the intersection of the image $\gamma_j$ of $c_j$
    with the boundary $\partial B_{\delta}(z^*)$ is transverse,
    equals the set of endpoints $\set{c_j(\pm1)}$,
    and
    \[
    S_1^{\mix}\cap\overline{B_{\delta}(z^*)}
    \subset
    \big(\gamma_1\cup\ldots\cup\gamma_k\big)
    \enifed
    \Gamma.
    \]
    In other words we get
    \begin{gather}
      \label{onedimsation2}
      S_1^{\mix}\cap\overline{B_{\delta}(z^*)}=
      S_1^{\mix}\cap\Gamma.
    \end{gather}
    Now, by Lemma \ref{noembeddingintor1} the sets $S_1^{\mix}\cap\gamma_j$
    contain no interior point.
    We conclude that $\gamma_j\setminus S_1^{\mix}$
    is an open and dense subset of $\gamma_j$ for any $j=1,\ldots,k$.
    Therefore, we find $\delta>0$ and $\delta_1\in (0,\delta)$ such that
    \begin{gather*}
      S_1^{\mix}\cap\A_{\delta_1,\delta}(z^*)=
      \emptyset
    \end{gather*}
    if we write
    \begin{gather*}
      \A_{\delta_1,\delta}(z^*)\define
      \overline{B_{\delta}(z^*)}\setminus B_{\delta_1}(z^*)
    \end{gather*}
    for the standard annulus.
    Moreover, Theorem \ref{mwmagerthm} and \eqref{onedimsation2} imply that
    \begin{gather*}
      \acc\big(S_1^{\inter}\cap B_{\delta}(z^*)\big)\subset
      S_1^{\mix}\cap\Gamma.
    \end{gather*}
    Hence,
    \begin{gather*}
      S_1^{\inter}\cap\A_{\delta_1,\delta}(z^*)
    \end{gather*}
    is a finite set.
    Consequently, we find $\delta_2\in (\delta_1,\delta)$ so that
    \begin{gather*}
      S_1^{\inter}\cap\partial B_{\delta_2}(z^*)=
      \emptyset.
    \end{gather*}
    Taking $\delta_2$ instead of $\delta>0$ yields the claim.
  \end{proof}

  \begin{lem}
    \label{disjboundlem2}
    Consider a point $z^*\in\tilde{R}_1$.
    Then for any $\rho>0$ there exists $\delta\in(0,\rho)$ such that
    $S_1\cap\partial B_{\delta}(z^*)=\emptyset$.
  \end{lem}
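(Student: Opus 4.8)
The plan is to exploit the defining difference between $\tilde{R}_1$ and $R_1^{\virt}$: a point $z^*\in\tilde{R}_1=R_1\setminus R_1^{\virt}$ lies in $R_1\subset S_1^{\mix}$ but fails to lie in $R_1^{\virt}=\acc(S_1^{\mix})$, so $z^*$ is an \emph{isolated} point of $S_1^{\mix}$. Accordingly I would first fix $\eta\in(0,\rho)$ so small that $S_1^{\mix}\cap B_{\eta}(z^*)=\set{z^*}$, i.e.\ $S_1^{\mix}$ is empty on the punctured ball $B_{\eta}(z^*)\setminus\set{z^*}$. On this punctured ball the decomposition $S_1=S_1^{\mix}\sqcup S_1^{\inter}$ collapses to $S_1=S_1^{\inter}$, and it therefore suffices to exhibit a circle $\partial B_{\delta}(z^*)$ with $0<\delta<\eta$ that avoids the interior self-intersection locus $S_1^{\inter}$.

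For the second step I would use that $S_1^{\inter}$ is countable. Indeed, by Theorem \ref{mwmagerthm} the set $S_{\inter}(u)$ is discrete in $B_1(0)\times B_1(0)$; a discrete subset of the separable metric space $\C^2$ is at most countable, and the projection $\pr_1$ preserves countability, so $S_1^{\inter}=\pr_1\!\big(S_{\inter}(u)\big)\cap\A$ is countable. It is worth stressing that $S_1^{\inter}$ itself may well accumulate at $z^*$ (with the partner points running off to $\partial\D$), which is precisely why $z^*\in R_1$; what the argument needs is countability, not discreteness of the projection.

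The desired radius is then produced by a counting argument. The radius map $z\mapsto\abs{z-z^*}$ sends the countable set $S_1^{\inter}\cap\big(B_{\eta}(z^*)\setminus\set{z^*}\big)$ to a countable subset of the interval $(0,\eta)$, so at most countably many values $\delta$ yield a circle $\partial B_{\delta}(z^*)$ meeting $S_1^{\inter}$. Since $(0,\min\set{\eta,\rho})$ is uncountable, I may choose $\delta$ in it omitted by these radii, giving $S_1^{\inter}\cap\partial B_{\delta}(z^*)=\emptyset$. As $0<\delta<\eta$ forces $S_1^{\mix}\cap\partial B_{\delta}(z^*)=\emptyset$ as well, we obtain $S_1\cap\partial B_{\delta}(z^*)=\emptyset$, which is the claim.

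I do not expect a serious obstacle here: the substantive work was already done in setting up the splitting $R_1=\tilde{R}_1\sqcup R_1^{\virt}$ and in the preceding Lemma \ref{disjboundlem}, which handles the genuinely hard case $z^*\in R_1^{\virt}$ through the one-dimensional reduction of Lemma \ref{typetwo}. The only point requiring care is the implication that $z^*\in\tilde{R}_1$ is isolated in $S_1^{\mix}$, which rests on the identity $R_1^{\virt}=\acc(S_1^{\mix})$ together with the inclusion $R_1\subset S_1^{\mix}$ from \eqref{disjointunionofintpoints}; once this is granted, the remainder is purely point-set topology.
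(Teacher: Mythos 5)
Your proof is correct, and its first half performs the same reduction as the paper: the definition of $\tilde{R}_1$ forces $z^*$ to be an isolated point of the set responsible for accumulation. The execution then differs in two ways. The paper only uses that $z^*$ is isolated in $R_1$ (via \eqref{keyproperty}), excises a smaller disc to get a compact annulus $\A_{\delta/2,\delta}(z^*)$ disjoint from $R_1$, concludes that $S_1$ is \emph{finite} there, and picks a circle missing those finitely many points. You instead upgrade to the statement that $z^*$ is isolated in $S_1^{\mix}$, using $R_1\subset S_1^{\mix}$ together with the identity $R_1^{\virt}=\acc(S_1^{\mix})$ (both recorded in Section \ref{section:mixed}), so that on the punctured ball $S_1$ reduces to $S_1^{\inter}$; then, rather than finiteness, you invoke only \emph{countability} of $S_1^{\inter}$ (the projection of the discrete, hence countable, set $S_{\inter}(u)$) and choose a radius outside the countable set of forbidden radii. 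Both routes are sound; yours buys a softer final step (no compactness of an annulus is needed, and the argument is insensitive to $S_1^{\inter}$ accumulating at $z^*$ itself, a subtlety you correctly flag), while the paper's version yields the marginally stronger information that $S_1$ is finite on a whole annulus. I see no gap.
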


  \begin{proof}
    Because of \eqref{keyproperty} and Lemma \ref{disjboundlem} we can consider the case,
    where $z^*$ is isolated in $R_1$.
    This means, that we can find $\delta>0$ such that
    $R_1\cap\overline{B_{\delta}(z^*)}=\set{z^*}$.
    With Theorem \ref{mwmagerthm} we see, that
    $S_1\cap\A_{\delta/2,\delta}(z^*)$
    is a finite set.
    This proves the claim.
  \end{proof}

  \begin{lem}
    Consider a point $z^*\in S_1$.
    Then for any $\rho>0$ there exists $\delta\in(0,\rho)$ such that
    $S_1\cap\partial B_{\delta}(z^*)=\emptyset$.
  \end{lem}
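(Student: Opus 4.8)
The plan is to reduce the statement to the two preceding lemmas by means of the decomposition of $S_1$ coming from the accumulation set. First I would record that $\RR=\acc(\SS)$ is contained in the closed set $\SS$, so that projecting to the first coordinate yields $R_1\subset S_1$; combined with $R_1=\tilde{R}_1\sqcup R_1^{\virt}$ this gives the disjoint decomposition $S_1=(S_1\setminus R_1)\sqcup\tilde{R}_1\sqcup R_1^{\virt}$. The proof then splits into the three corresponding cases for the given point $z^*\in S_1$.

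The cases $z^*\in R_1^{\virt}$ and $z^*\in\tilde{R}_1$ are already settled: the first is precisely Lemma \ref{disjboundlem} and the second is Lemma \ref{disjboundlem2}, each producing arbitrarily small radii $\delta$ with $S_1\cap\partial B_{\delta}(z^*)=\emptyset$. No further work is needed there.

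The remaining and genuinely new case is $z^*\in S_1\setminus R_1$. Here I would argue that such a point must be isolated in $S_1$, which immediately yields a $\delta_0>0$ with $S_1\cap B_{\delta_0}(z^*)=\set{z^*}$, and hence empty intersection with every circle $\partial B_{\delta}(z^*)$ for $\delta\in(0,\delta_0)$. To establish isolation it suffices to prove the inclusion $\acc(S_1)\subset R_1$, since then $z^*\notin R_1$ forces $z^*\notin\acc(S_1)$. This is a short compactness argument: given $x_{\nu}\in S_1$ with $x_{\nu}\ra z^*$ and $x_{\nu}\neq z^*$, I would choose partners $y_{\nu}\in\D$ with $(x_{\nu},y_{\nu})\in\SS$, extract a convergent subsequence $y_{\nu}\ra y^*$ using compactness of $\D$, and note that the limit $(z^*,y^*)$ lies in the closed set $\SS$ while being distinct from every $(x_{\nu},y_{\nu})$ because the first coordinates differ. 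Thus $(z^*,y^*)\in\acc(\SS)=\RR$ and $z^*=\pr_1(z^*,y^*)\in R_1$.

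I expect the only point requiring care to be this last inclusion $\acc(S_1)\subset R_1$, i.e.\ the fact that the projection $\pr_1$ might a priori destroy the accumulation structure. Compactness of the second factor $\D$ is exactly what rescues it, by promoting an accumulation in $S_1$ to an honest accumulation of self-intersection pairs in $\SS$. Everything else is bookkeeping once the two previous lemmas and the decomposition $R_1=\tilde{R}_1\sqcup R_1^{\virt}$ are in hand.
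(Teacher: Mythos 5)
Your proposal is correct and follows essentially the same route as the paper: reduce via the decomposition of $R_1$ to the two preceding lemmas, and observe that a point of $S_1\setminus R_1$ is isolated in $S_1$. The only difference is that you explicitly justify the inclusion $\acc(S_1)\subset R_1$ by the compactness-of-the-second-factor argument, a step the paper's proof asserts (as the identity $\acc(S_1)=\tilde{R}_1\sqcup R_1^{\virt}$) without further comment.
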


  \begin{proof}
    Because of $\acc(S_1)=\tilde{R}_1\sqcup R_1^{\virt}$,
    Lemma \ref{disjboundlem} and Lemma \ref{disjboundlem2}
    we can consider the case,
    where $z^*$ is isolated in $S_1$.
    This means, that we can find $\delta>0$ such that
    $S_1\cap\overline{B_{\delta}(z^*)}=\set{z^*}$.
    This proves the claim.
  \end{proof}

\begin{proof}[{\bf Proof of Theorem \ref{annulusproperty} (part II)}]
  Summing up, for each $z^*\in S_1$ there exists $\delta>0$ such that
  \begin{gather}
    \label{disjboundglob}
    S_1\cap\partial B_{\delta}(z^*)=\emptyset
    \quad\text{and}\quad
    \partial\A\cap B_{\delta}(z^*)
    \;\;\text{is connected.}
  \end{gather}
  By compactness of $S_1$ we find $N\in\N$, $\delta_1,\ldots,\delta_N>0$ and $z_1,\ldots,z_N$ in $S_1$
  such that
                                                            \begin{gather*}
                                                              S_1\subset \bigcup_{j=1}^NB_{\delta_j}(z_j)
                                                            \end{gather*}
  and \eqref{disjboundglob} holds with $z^*$ replaced by each of the $z_1,\ldots,z_N$.
  Notice, that by openness of $\A\setminus S_1$ we find for each $j=1,\ldots,N$ some $\tilde{\delta}_j\in (0,\delta_j)$
  keeping the mentioned properties in \eqref{disjboundglob}.
  By removing the annuli $B_{\delta_j}(z_j)\setminus\overline{B_{\tilde{\delta}_j}(z_j)}$
  we find $K\in\N$ and pairwise disjoint, closed, simply connected subsets $B_1,\ldots,B_K$ of $\A$
  (with non-empty interior)
  covering $S_1$ such that $\partial\A\cap B_j$ is connected for all $j=1,\ldots,K$.
  Consequently, $\A\setminus (B_1\cup\ldots\cup B_K)$ contains an embedded closed curve $\gamma$
  with winding number equal to $1$ with respect to $z_0$.
  A tubular neighbourhood of $\gamma$ yields an annulus $A_{\varepsilon,z_0}$
  with $u^{-1}\big(u(A_{\varepsilon,z_0})\big)=A_{\varepsilon,z_0}$ as desired.
  This finishes the proof of Theorem \ref{annulusproperty}.
\end{proof}

\begin{rem}
  One can show that $R_1$ is totally path disconnected and that $R_1$ has $2$-dimensional
  Lebesgue measure equal to zero.
\end{rem}


\section{The half-annulus property\label{halfannuluspropertysubsec}}

Once shown the annulus property for simple holomorphic discs
one naturally asks for the situation at boundary points.
Again we consider a $J$-holomorphic disc $u:(\D,\partial\D)\ra(M,L)$
in an almost complex manifold $(M,J)$
which takes boundary values in a totally real submanifold $L$ of $M$.
The content of this section is a version of Theorem \ref{annulusproperty}
valid for boundary points of those holomorphic discs $u$.

\begin{defi}
  \label{def:halfannulusprop}
  We will say that $u$ has the {\bf half-annulus property}
  if for any point $z_0\in\partial\D$ and any
  $\varepsilon>0$
  there exists a closed subset $A_{\varepsilon,z_0}$
  diffeomorphic to the annulus $S^1\times [0,1]$
  such that the following conditions are satisfied:
  \begin{enumerate}
  \item [(i)]
    $A_{\varepsilon,z_0}\subset B^*_{\varepsilon}(z_0)$ (viewed as subsets of $\C$),
    such that $A_{\varepsilon,z_0}\cap\partial\D$ has exactly two components,
  \item [(ii)]
    a boundary component of $A_{\varepsilon,z_0}$ has winding number $1$ around $z_0$,
  \item [(iii)]
    $u\restrict_{A_{\varepsilon,z_0}^+}$ is an embedding,
    where $A_{\varepsilon,z_0}^+\define A_{\varepsilon,z_0}\cap\D$, and
  \item [(iv)]
    $u^{-1}\big(u(A_{\varepsilon,z_0}^+)\big)=A_{\varepsilon,z_0}^+$.
  \end{enumerate}
\end{defi}

Not every simple holomorphic disc has the half-annulus property.
For example the map $u:(\D,\partial\D)\ra(\C P^1,\R P^1)$
induced by $z\mapsto z^2$ on the upper half-plane
is simple;
but none of the boundary points is injective.
In view of the last two conditions this holomorphic disc can not have the half-annulus property.
But there is an interesting subclass of simple holomorphic discs
which will have the half-annulus property.

\begin{defi}
  \label{defofstrparsimple}
  A $J$-holomorphic disc $u$ is {\bf strongly simple along the boundary}
  or {\bf strongly} $\partial$-{\bf simple}
  if $\inj(u)\cap\partial\D$ is dense in $\partial\D$.
\end{defi}

As we will see in Proposition \ref{frombdrtoint} below
all $\partial$-simple $J$-holomorphic discs are simple.
But there are simple $J$-holomorphic discs
which can not be simple along the boundary
as the above example $u:(\D,\partial\D)\ra(\C P^1,\R P^1)$ shows.
So we introduced a more restrictive notion.

Moreover,
any $J$-holomorphic disc $u$
which has the half-annulus property
is $\partial$-simple.
This is because for all $z_0\in\partial\D$ and for all $\varepsilon>0$
the half-annulus $A_{\varepsilon,z_0}^+$ is a subset of $\inj(u)$.
So any point in the boundary $\partial\D$ can be approximated by those injective points of $u$
which are boundary points at the same time.
In fact we have the following:

\begin{thm}
  \label{halfannulusproperty}
  A $J$-holomorphic disc is strongly $\partial$-simple if and only if it has the half-annulus property.
\end{thm}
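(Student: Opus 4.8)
The forward implication is exactly the observation made just before the statement: if $u$ has the half-annulus property then for every $z_0\in\partial\D$ and every small $\varepsilon$ the half-annulus $A^+_{\varepsilon,z_0}$ lies in $\inj(u)$ and meets $\partial\D$ in two arcs, so $z_0$ is approximated by boundary injective points; hence $\inj(u)\cap\partial\D$ is dense and $u$ is strongly $\partial$-simple. The plan is therefore to prove the converse: a strongly $\partial$-simple disc has the half-annulus property. By Proposition \ref{frombdrtoint} such a disc is in particular simple, so the entire interior machinery of the previous sections is at our disposal.

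Fix $z_0\in\partial\D$ and $\varepsilon>0$. First I would use Proposition \ref{pointsfinite} to shrink $\varepsilon$ so that $B^*_{\varepsilon}(z_0)\cap\crit(u)=\emptyset$; then $u$ restricted to the punctured half-disc $B^*_{\varepsilon}(z_0)\cap\D$ is an immersion, and the only obstruction to injectivity of a point is membership in the self-intersection projection $S_1(u)$. I would then introduce the standard half-annulus
\[
\A\define\big(\overline{B_{\varepsilon/2}(z_0)}\setminus B_{\varepsilon/4}(z_0)\big)\cap\D,
\]
a curved quadrilateral whose two ``short'' sides are arcs of $\partial\D$ on either side of $z_0$, and study the set $S_1\define\pr_1\big(S(u)\cap\A\times\D\big)$ of its self-intersecting points. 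The goal is to produce inside $\A$ an embedded arc $\gamma^+$ joining the two boundary arcs and separating the inner from the outer circular side, every point of which is injective; closing $\gamma^+$ up by an arbitrary arc in $\C\setminus\D$ yields an embedded circle of winding number $1$ about $z_0$, and a thin tubular neighbourhood, intersected with $\D$, then supplies $A^+_{\varepsilon,z_0}$. Conditions (iii) and (iv) of Definition \ref{def:halfannulusprop} follow because $\inj(u)$ is open in the closed disc $\D$ (Proposition \ref{injuopen}).

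The core is thus to show that $S_1$ is too thin to block such an arc. I would mirror the two-part argument for Theorem \ref{annulusproperty}. Simplicity gives $S_1$ empty interior (Proposition \ref{equivcharofsimple}); the interior self-intersections are discrete away from $\partial\D$ by Micallef--White (Theorem \ref{mwmagerthm}); and the accumulation set of the boundary and mixed self-intersections contains no embedded arc, by the relative Carleman principle exactly as in Lemma \ref{noembeddingintor1} together with the unique-continuation Lemma \ref{isolembb}. Feeding this into the covering Lemmas \ref{disjboundlem} and \ref{disjboundlem2} (and the packing argument in part~II of the proof of Theorem \ref{annulusproperty}) produces a finite family of pairwise disjoint, closed, simply connected sets covering $S_1$, each meeting $\partial\D$ in a connected set. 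Using strong $\partial$-simplicity I would choose the two endpoints of $\gamma^+$ to be boundary injective points on the two boundary arcs---these are dense there by hypothesis---so that they avoid the covering sets, and then route $\gamma^+$ through the complement of the covering in $\A$.

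The main obstacle is the behaviour of $S_1$ along and near the two boundary arcs: in addition to interior and mixed intersections there are boundary--boundary intersections, and if two boundary branches of $u$ overlapped (the self-matching phenomenon of the $z\mapsto z^2$ example) the covering would swallow an entire boundary arc and no injective endpoint could be found. This is precisely what strong $\partial$-simplicity rules out, and the mechanism is the relative Carleman similarity principle: Lemma \ref{isolembb} shows that a positive-sign boundary contact forces two branches to coincide, contradicting density of $\inj(u)\cap\partial\D$. I expect verifying that the covering construction extends cleanly across $\partial\D$---keeping the covering sets boundary-connected and leaving the two boundary arcs with dense injective points available as endpoints---to be the delicate technical point, after which the topological construction of $A^+_{\varepsilon,z_0}$ proceeds as in the interior case.
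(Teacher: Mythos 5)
Your forward direction and your overall plan---reduce to the interior machinery of Theorem \ref{annulusproperty} and use strong $\partial$-simplicity to deal with the part of the half-annulus touching $\partial\D$---match the paper's. But the step you yourself flag as ``the delicate technical point'' is a genuine gap, and the way you propose to close it would not work as stated. Your set $S_1=\pr_1\big(S(u)\cap\A\times\D\big)$, for a half-annulus $\A$ containing two arcs of $\partial\D$, includes boundary--boundary and mixed self-intersections accumulating \emph{at} points of $\partial\D$. All of the covering machinery you invoke (Lemma \ref{noembeddingintor1}, Lemmas \ref{disjboundlem} and \ref{disjboundlem2}, part~II of the proof of Theorem \ref{annulusproperty}) is proved only for accumulation points $z^*$ lying in a compact subset of the open disc: Lemma \ref{typetwo} needs $u$ to be an interior embedding on a full ball around $z^*$, and Theorem \ref{mwmagerthm} gives discreteness only in $B_1(0)\times B_1(0)$. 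Choosing merely the two endpoints of $\gamma^+$ to be injective boundary points does not control $S_1$ along the remainder of the two boundary arcs, so ``extending the covering across $\partial\D$'' would require a new analysis of boundary accumulation points (self-matching versus overlapping, immersed or not) that you do not supply.

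The paper's resolution is cheaper and sidesteps the issue entirely. Strong $\partial$-simplicity together with openness of $\inj(u)\cap\partial\D$ in $\partial\D$ (Proposition \ref{injuopen}) gives not just two injective endpoints but two entire injective boundary arcs, i.e.\ radii $0<\varepsilon_1<\varepsilon_2<\varepsilon$ with $\A_{\varepsilon_1,\varepsilon_2}(z_0)\cap\partial\D\subset\inj(u)$; a second application of openness of $\inj(u)$ in $\D$ then yields a full two-dimensional collar $\A_{\varepsilon_1,\varepsilon_2}(z_0)\cap\set{\varrho\leq\abs{z}\leq1}\subset\inj(u)$, on which $u^{-1}\big(u(\gap)\big)$ is trivial. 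After a Micallef--White shrinking making $u\restrict_{\A_{\varepsilon_1,\varepsilon_2}(z_0)}$ an embedding, the only set left to analyse is $\A_{\varepsilon_1,\varepsilon_2}(z_0)\cap\overline{B_{\varrho}(0)}$, a compact subset of the open disc, where the argument of Theorem \ref{annulusproperty} applies verbatim. Replace your two-endpoint choice by this collar step and your proof goes through.
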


\begin{proof}
  We will show that
  any $\partial$-simple $J$-holomorphic disc $u$ has the half-annulus property.
  This of course is enough to prove the theorem.
  To establish the half-annulus property of $u$
  we will reduce the proof to the one of Theorem \ref{annulusproperty}.

  For that we consider a point $z_0\in\partial\D$ and take $\varepsilon>0$
  such that $\D\setminus\partial B_{\varepsilon}(z_0)$ is disconnected.
  By Proposition \ref{pointsfinite} 
  we can choose $\varepsilon>0$ so small such that
                                                            \begin{gather}
                                                              \label{halffirstconclusion}
                                                              B^*_{\varepsilon}(z_0)\cap\crt(u)=\emptyset.
                                                            \end{gather}
  In particular, $u\restrict_{B^*_{\varepsilon}(z_0)}$ is an immersion but potentially with self-intersections.
  So it remains to discuss the double points.

  First of all we remark
  that by Proposition \ref{injuopen}
  the set $\inj(u)\cap\partial\D$ is open in $\partial\D$.
  By the assumed strong $\partial$-simplicity of $u$ it is dense in $\partial\D$ as well.
  Therefore, we find $0<\varepsilon_1<\varepsilon_2<\varepsilon$ such that
  $\A_{\varepsilon_1,\varepsilon_2}(z_0)\cap\partial\D$ is a subset of $\inj(u)$,
  where we set
  \[
  \A_{\varepsilon_1,\varepsilon_2}(z_0)\define\set{\varepsilon_1\leq\abs{z-z_0}\leq\varepsilon_2}.
  \]
  In fact we find $\varrho\in(0,1)$ such that
  for the neighbourhood $\A_{\varrho,1}(0)$ of $\partial\D$ we have
                                                            \begin{gather}
                                                              \label{reductiontononhalf}
                                                              \A_{\varepsilon_1,\varepsilon_2}(z_0)\cap\A_{\varrho,1}(0)
                                                              \subset\inj(u).
                                                            \end{gather}
  This can be seen by a further application of Proposition \ref{injuopen}.

  On the other hand,
  by Theorem \ref{mwmagerthm},
  the set
  $S(u)\cap\big(B_{\varrho}(0)\times B_{\varrho}(0)\big)$ is finite.
  Hence, we can take $0<\varepsilon_1<\varepsilon_2<\varepsilon$ so small
  such that
                                                            \begin{gather*}
                                                              S(u)\cap
                                                              \big(
                                                                \A_{\varepsilon_1,\varepsilon_2}(z_0)\times\A_{\varepsilon_1,\varepsilon_2}(z_0)
                                                              \big)=
                                                              \emptyset.
                                                            \end{gather*}
  In other words $u\restrict_{\A_{\varepsilon_1,\varepsilon_2}(z_0)}$ is injective, and, therefore, an embedding.
  In addition observe that by \eqref{reductiontononhalf}
                                                            \begin{gather*}
                                                              u^{-1}\Big(u\big(\A_{\varepsilon_1,\varepsilon_2}(z_0)\cap\A_{\varrho,1}(0)\big)\Big)
                                                              =\A_{\varepsilon_1,\varepsilon_2}(z_0)\cap\A_{\varrho,1}(0).
                                                            \end{gather*}
  So it remains to study the subset $u^{-1}\big(u(\A)\big)$ of $\D$,
  where
                                                            \begin{gather*}
                                                              \A\define
                                                              \A_{\varepsilon_1,\varepsilon_2}(z_0)\cap
                                                              \overline{B_{\varrho}(0)}.
                                                            \end{gather*}
  This can be done by similar arguments as used in Theorem \ref{annulusproperty}.
\end{proof}

As we already remarked, the half-annulus property (or equivalently strong $\partial$-simplicity)
implies simplicity.
Using the fundamental work of Lazzarini \cite{lazz00}
this can be seen as follows:

\begin{prop}
  \label{frombdrtoint}
  Any strongly $\partial$-simple holomorphic disc is simple.
\end{prop}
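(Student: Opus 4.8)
The goal is to prove Proposition~\ref{frombdrtoint}: any strongly $\partial$-simple holomorphic disc is simple. Since simplicity means $\inj(u)$ is dense in the whole disc $\D$ (Definition~\ref{verydefofsimpledisc}), while strong $\partial$-simplicity gives density of $\inj(u)$ only along $\partial\D$, the plan is to propagate the abundance of boundary injective points into the interior. The most direct route is to argue by contraposition: suppose $u$ is \emph{not} simple and show that then $\inj(u)\cap\partial\D$ cannot be dense in $\partial\D$, contradicting strong $\partial$-simplicity.

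First I would use Proposition~\ref{equivcharofsimple}: if $u$ is not simple, there exist two nonempty disjoint open subsets $U_1,U_2\subset\D$ with $u(U_1)=u(U_2)$. The plan is to invoke the structural results of Lazzarini~\cite{lazz00} on the reduction of a holomorphic disc to its simple factor. Lazzarini's theory provides a factorization of $u$ through a \emph{simple} disc together with a branched-covering-type map of the domain; the non-simplicity means this covering has degree at least $2$ over an open region. The key point is that the failure of injectivity detected on the interior must also manifest along a full boundary arc. Concretely, I would argue that the open set $U_1$ (or the region where the covering degree exceeds one) has a closure meeting $\partial\D$ in a nondegenerate arc, so that the self-matching or multiple covering forces an entire boundary segment into $\D\setminus\inj(u)$.

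The mechanism for pushing the interior phenomenon to the boundary is the unique continuation / similarity principle already exploited throughout the paper. If $u(U_1)=u(U_2)$ on interior open sets, Lemma~\ref{weakversion} produces an analytic reparametrization $\varphi$ with $u=u\circ\varphi$ on a neighbourhood; analytic continuation of $\varphi$ extends this identification up to the boundary along the arcs bounding $U_1,U_2$. Thus one obtains either two disjoint boundary arcs $I_1,I_2\subset\partial\D$ with $u(I_1)=u(I_2)$ (a self-matching, which already obstructs boundary injectivity on $I_1\cup I_2$), or a single arc on which $u$ is locally multiply covered, again forcing $I\subset\D\setminus\inj(u)$. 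In either case $\inj(u)\cap\partial\D$ omits a nonempty open arc of $\partial\D$, so it is not dense, contradicting strong $\partial$-simplicity.

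The main obstacle I expect is the rigorous passage from the interior identity $u=u\circ\varphi$ to a genuine identification of boundary arcs: one must control the boundary behaviour of the reparametrization $\varphi$ and rule out the degenerate possibility that the two overlapping interior sheets both accumulate only at a single boundary point rather than along an arc. This requires the finiteness results of Proposition~\ref{pointsfinite} (finiteness of $\crit(u)$ and of preimages) to ensure that the branch locus is discrete, so that away from finitely many bad points the covering structure is locally trivial and extends continuously to the boundary. Handling this boundary regularity of $\varphi$, and confirming that the identified sheets remain distinct up to $\partial\D$, is where the technical weight of Lazzarini's work in \cite{lazz00} is genuinely needed, and it is the step I would develop most carefully.
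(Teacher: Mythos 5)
Your high-level strategy (contraposition via Proposition~\ref{equivcharofsimple}, then propagating the interior overlap out to $\partial\D$ to kill density of boundary injective points) is the same as the paper's, and you correctly identify where the difficulty lies. But the proposal stops exactly at that difficulty without supplying the idea that resolves it. ``Analytic continuation of $\varphi$ up to the boundary along the arcs bounding $U_1,U_2$'' is not an argument: Lemma~\ref{weakversion} only gives $u=u\circ\varphi$ on an interior neighbourhood, and nothing guarantees {\it a priori} that the overlapping region approaches $\partial\D$ along arcs rather than, say, having its closure meet $\partial\D$ in a totally disconnected set, or that $\varphi$ has any boundary regularity. The paper's mechanism for forcing the overlap to reach the boundary is different and is the actual content of the proof: one takes, via Zorn's Lemma, \emph{maximal} disjoint open sets $U_1,U_2\subset B_1(0)\setminus\crt(u)$ with $u(U_1)=u(U_2)$; maximality combined with Lemma~\ref{weakversion} (and openness of analytic maps) excludes the possibility that a non-critical point of $\partial U_1\times\partial U_2$ lies in $B_1(0)\times B_1(0)$; and the set-theoretic boundary of a bounded open set is infinite while $\crt(u)$ is finite, so some boundary point of $U_j$ must lie on $\partial\D$ away from $\crt(u)$. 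This elementary counting step is what your sketch is missing.

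Moreover, once a point of $\partial U_1\cup\partial U_2$ is located on $\partial\D$, two distinct cases must be handled, and your proposal conflates them. The mixed case $(z_1,z_2)\in\partial\D\times B_1(0)$ is treated in the paper by Lazzarini's \cite[Proposition~4.5]{lazz00}, producing a boundary arc whose image is contained in $u$ of an interior open set, so the arc consists of non-injective points; this is not a ``self-matching of two boundary arcs'' and is not covered by your dichotomy. The boundary--boundary case requires the relative Carleman similarity principle: Lemma~\ref{signofdelta} gives tangency with a real factor $\delta$, one must check the sign is positive (it is, because here the intersection points accumulate from the interior), and only then does Lemma~\ref{isolembb} yield two disjoint boundary arcs $I_1,I_2$ with $u(I_1)=u(I_2)$. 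The sign issue matters: with negative sign the two sheets would glue rather than overlap (Lemma~\ref{smoothmatching}), and the conclusion you want would not follow from unique continuation alone. As written, the proposal defers all of this to ``the technical weight of Lazzarini's work,'' so it is a plan rather than a proof; the missing steps are precisely the maximality/finiteness argument and the two-case boundary analysis.
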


\begin{proof}
  Arguing by contradiction
  we suppose that the $J$-holomorphic disc $u$ is not simple.
  Then there exist disjoint open sets $B_1$ and $B_2$ in $B_1(0)\setminus\crt(u)$
  such that $u(B_1)=u(B_2)$, see Proposition \ref{equivcharofsimple}.
  Denote by $U_1$ and $U_2$ the open subsets of $B_1(0)\setminus\crt(u)$
  such that $B_1\subset U_1$, $B_2\subset U_2$ and $u(U_1)=u(U_2)$,
  which are maximal in the following sense:
  if there are open subsets $V_1$ and $V_2$ of $B_1(0)\setminus\crt(u)$
  with $U_1\subset V_1$, $U_2\subset V_2$ and $u(V_1)=u(V_2)$
  then we have $U_1=V_1$ and $U_2=V_2$.
  The existence is insured by Zorn's Lemma.
  
  We consider a point $(z_1,z_2)\in\partial U_1\times\partial U_2$,
  where $\partial U_j$ denotes the set-theoretical boundary of $U_j$ (for $j=1,2$).
  Additionally, we suppose that $du(z_j)\neq 0$ for $j=1,2$.
  In particular we have $z_1\neq z_2$.
  Notice, by Lemma \ref{weakversion}
  (recalling that any analytic function is open)
  and maximality of the $U_j$ the case $(z_1,z_2)\in B_1(0)\times B_1(0)$ is excluded.
  So by symmetry there are two cases left.
  
  First of all we assume that $(z_1,z_2)\in\partial\D\times B_1(0)$.
  By the construction of the $U_j$ and \cite[Proposition 4.5]{lazz00}
  we find open subsets $V_1$ of $\D$ and $V_2$ of $B_1(0)$
  such that $z_j\in V_j$ for $j=1,2$ and $u(V_1)\subset u(V_2)$.
  Hence, $V_1\cap\partial\D$ is contained in $\partial\D\setminus\inj(u)$,
  which is not possible by strong $\partial$-simplicity of $u$.
  
  We consider the case $(z_1,z_2)\in\partial\D\times\partial\D$.
  Because of $(z_1,z_2)\in\acc\big(S_{\inter}(u)\big)$ there exists an non-zero real number $\delta$
  such that $\partial_ru(z_1)=\delta\partial_ru(z_2)$
  (see Lemma \ref{signofdelta}),
  where $\partial_r$ denotes the radial derivative.
  By the proof of Lemma \ref{isolembb} this number $\delta$ must be positive,
  because in this particular situation the accumulation point is approached from inside.
  So we see (again with Lemma \ref{isolembb})
  there are open and disjoint subsets $I_1$ and $I_2$ of $\partial\D$
  such that $u(I_1)=u(I_2)$.
  But this contradicts the fact that $u$ is strongly $\partial$-simple.
  
  Consequently, we infer that $\partial U_j\subset\crt(u)$,
  which is a finite set.
  But this is not possible, because the cardinality of the boundary of any open and bounded subset of $\R^2$
  is infinite.
  (Indeed, any ray starting from a point in $U_j$
  going through a point of a circle in the complement of $U_j$ contains a boundary point.)
  Therefore, $u$ must be simple.
\end{proof}


\section[Injective points along the boundary]{Injective points
  along the boundary\label{injpointsalongtheboundary}}

In this section we discuss a second notion for a holomorphic disc to be injective on boundary points.
This allows us to verify the half-annulus property in many cases, as we will see in the next sections.
To motivate the definition we consider the holomorphic disc $(\D,\partial\D)\ra(\C P^1,\R P^1)$
defined by $z\mapsto z^3$ on the upper half-plane.
Restricted to the boundary this map is injective and immersive except at $0$ and $\infty$.
But no boundary point is injective in the sense of \eqref{sipoint}.

To study this phenomenon more systematically
we consider a $J$-holomorphic discs $u:(\D,\partial\D)\ra(M,L)$
for an almost complex structure $J$ as usual.
Following Oh \cite{oh96} we call a point $z\in\partial\D$ satisfying
                                                            \begin{gather}
                                                              \label{sipointboud}
                                                              u^{-1}\big(u(z)\big)\cap\partial\D=\set{z}
                                                              \quad\text{and}\quad
                                                              du(z)\neq0
                                                            \end{gather}
an {\bf injective point of} $u\bd$.
The set of all injective points of $u\bd$ is denoted by
                                                            \begin{gather*}
                                                              \inj(u\bd)\define
                                                              \bigset{
                                                                z\in\partial\D
                                                                \suchthatb
                                                                z\;\;
                                                                \text{is an injective point of}
                                                                \;\;u\bd
                                                              }.
                                                            \end{gather*}
We remark that
any injective point of $u$ on the boundary is an injective point of $u\bd$,
i.e.\
                                                            \begin{gather}
                                                              \label{weakvsstrong}
                                                              \inj(u)\cap\partial\D\subset\inj(u\bd).
                                                            \end{gather}
But in general equality does not hold as the above example shows.

Similar to Proposition \ref{injuopen} we have:

\begin{prop}
  \label{injuopenbound}
  The set $\inj(u\bd)$ is open in $\partial\D$.
\end{prop}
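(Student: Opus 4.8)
The plan is to mimic the proof of Proposition \ref{injuopen}, but working entirely within the boundary circle $\partial\D$ and using the compactness of $\partial\D$ to handle the fact that the defining condition \eqref{sipointboud} only controls preimages lying on the boundary. Let $z_0\in\inj(u\bd)$. I want to produce a relatively open neighbourhood $W$ of $z_0$ in $\partial\D$ such that every $z\in W$ also satisfies \eqref{sipointboud}. As in Proposition \ref{injuopen}, the condition $du(z_0)\neq0$ is open: since $u$ is a local embedding near any non-critical point, there is an open neighbourhood $V$ of $z_0$ in $\D$ on which $u\restrict_V$ is injective, and after shrinking we may assume $du\neq0$ throughout $V\cap\partial\D$. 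It remains to guarantee that the first condition in \eqref{sipointboud} persists for boundary points near $z_0$.

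For that I would argue by contradiction exactly as in Proposition \ref{injuopen}. Suppose no such boundary neighbourhood exists. Then we can find sequences $z_{\nu}\ra z_0$ and $w_{\nu}\ra w_0$, both lying in $\partial\D$, with $z_{\nu}\neq w_{\nu}$ and $u(z_{\nu})=u(w_{\nu})$ for all $\nu\in\N$. Here $w_0\in\partial\D$ by compactness of $\partial\D$ (passing to a subsequence). By continuity $u(z_0)=u(w_0)$, so $w_0\in u^{-1}\big(u(z_0)\big)\cap\partial\D=\set{z_0}$ by the first condition in \eqref{sipointboud}; hence $w_0=z_0$. But then for large $\nu$ both $z_{\nu}$ and $w_{\nu}$ lie in the embedding neighbourhood $V$, and $u(z_{\nu})=u(w_{\nu})$ with $z_{\nu}\neq w_{\nu}$ contradicts injectivity of $u\restrict_V$. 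This is the same structure as before, and the conclusion is that $z_0$ has a relatively open boundary neighbourhood contained in $\inj(u\bd)$.

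The only genuine difference from Proposition \ref{injuopen} is that the relevant preimage condition is restricted to $\partial\D$, so the limit point $w_0$ must itself be shown to lie on $\partial\D$; this is immediate because all $w_{\nu}\in\partial\D$ and $\partial\D$ is closed. The step I expect to require the most care is simply ensuring the competing sequence $w_\nu$ stays on the boundary so that the defining condition \eqref{sipointboud}, which says nothing about interior preimages, can be invoked to force $w_0=z_0$. Everything else is a verbatim transcription of the earlier argument, using that $J$-holomorphicity makes $u$ a local embedding at non-critical points.
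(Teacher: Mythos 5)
Your proof is correct and follows exactly the route the paper intends: the paper gives no separate argument for Proposition \ref{injuopenbound}, merely noting it is ``similar to Proposition \ref{injuopen}'', and your transcription of that sequence argument, with the one new observation that the competing preimages $w_\nu$ and their limit $w_0$ stay in the closed set $\partial\D$ so that the boundary-restricted condition \eqref{sipointboud} applies, is precisely what is needed.
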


A $J$-holomorphic disc $u$ is called {\bf somewhere injective along the boundary}
if $u\bd$ has an injective point, i.e.\ if $\inj(u\bd)\neq\emptyset$.
Similar to Section \ref{section:characterisation}
this notion is related to
the set of self-intersection points
                                                            \begin{gather*}
                                                              S_{\partial}(u)\define
                                                              S(u)\cap\big(\partial\D\times\partial\D\big)
                                                            \end{gather*}
of $u\bd$, which we call the set of {\bf boundary-self-intersection points of} $u$.
We denote by
                                                            \begin{gather*}
                                                              S_1^{\partial}(u)\define\pr_1S_{\partial}(u)
                                                            \end{gather*}
the projection to the first coordinate.
As in the non-boundary case we have
                                                            \begin{gather}
                                                              \label{clousureofsubdry}
                                                              \overline{S_{\partial}(u)}\subset S_{\partial}(u)\sqcup\Delta_{\crit(u\bd)},
                                                            \end{gather}
because a singular point of $u\bd$ is singular, i.e.\
                                                            \begin{gather*}
                                                              \crit(u)\cap\partial\D=\crit(u\bd).
                                                            \end{gather*}
In general, the inclusion in \eqref{clousureofsubdry} is strict.
For example consider the injective map
                                                            \begin{gather*}
                                                              \begin{array}{rcl}
                                                              \Hp\cup\set{\infty}
                                                              &\lra&
                                                              \big(\C\cup\set{\infty}\big)\times\big(\C\cup\set{\infty}\big)
                                                              \\
                                                              z
                                                              &\longmapsto&
                                                              (z^2,z^3)
                                                              \end{array}
                                                            \end{gather*}
which has singularities on the boundary.
The complement of the set of injective points along the boundary has the following description:
                                                            \begin{gather}
                                                              \label{dwithoutinjubdr}
                                                              \partial\D\setminus\inj(u\bd)=
                                                              \crit(u\bd)\cup S_1^{\partial}(u)\supset
                                                              \overline{S_1^{\partial}(u)}.
                                                            \end{gather}
Therefore, we see again that $\inj(u\bd)$ is open.


\section[Different notions along the boundary]{Different
  notions of simplicity along the boundary\label{simplicityalongtheboundary}}

In this section we give a first criterion for a simple holomorphic disc
to have the half-annulus property.
By Theorem \ref{halfannulusproperty} the half-annulus property is equivalent
to be strongly simple along the boundary
which implies simplicity as we saw in Proposition \ref{frombdrtoint}.
So we will find out when a simple holomorphic disc is strongly simple along the boundary.

Recall, that
a holomorphic disc $u$ is called strongly simple along the boundary
if $\inj(u)\cap\partial\D$ is dense in $\partial\D$.
Ignoring mixed self-intersections of $u$ we find a second notion of simplicity.

\begin{defi}
  \label{defofweaksimpleonbdry}
  We call a $J$-holomorphic disc $u$ {\bf weakly simple along the boundary}
  or {\bf weakly} $\partial$-{\bf simple}
  if $\inj(u\bd)$ is dense in $\partial\D$.
\end{defi}

This is the version of simplicity
which corresponds to the notion of being somewhere injective along the boundary.
By \eqref{weakvsstrong} we have that any strongly $\partial$-simple $J$-holomorphic disc
is weakly $\partial$-simple.
The converse is not true, as the map $z\mapsto z^3$ defined on the upper half-plane shows.
This holomorphic disc is weakly $\partial$-simple but not even simple (while it is somewhere injective).
So we introduced in fact a weaker notion.

\begin{prop}
  \label{frombdrtoint2}
  Any simple and weakly $\partial$-simple holomorphic disc is strongly $\partial$-simple.
\end{prop}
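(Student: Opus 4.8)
The plan is to reduce everything to a single statement about the set of boundary points whose value is also attained in the interior, namely
$\MM\define\set{z\in\partial\D\suchthat u^{-1}\big(u(z)\big)\cap B_1(0)\neq\emptyset}$.
Since $\D=B_1(0)\sqcup\partial\D$, a boundary point $z$ lies in $\inj(u)$ exactly when $z\in\inj(u\bd)$ and $z\notin\MM$; that is, $\inj(u)\cap\partial\D=\inj(u\bd)\setminus\MM$, compare \eqref{weakvsstrong}. By Proposition \ref{injuopenbound} and weak $\partial$-simplicity the set $\inj(u\bd)$ is open and dense in $\partial\D$. As $\partial\D$ is a compact, hence Baire, space, it therefore suffices to prove that $\MM$ is nowhere dense: then $\partial\D\setminus\overline{\MM}$ is open and dense, and its intersection with $\inj(u\bd)$ is a dense subset of $\inj(u)\cap\partial\D$, giving strong $\partial$-simplicity.

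Suppose, arguing by contradiction, that $\MM$ is dense in some open arc $I\subset\partial\D$. Using that $\crit(u\bd)$ is finite (Proposition \ref{pointsfinite}) and that $\overline{S_1^{\partial}(u)}$ is nowhere dense in $\partial\D$ by \eqref{dwithoutinjubdr} and weak $\partial$-simplicity, I would shrink $I$ so that $\overline{I}\subset\inj(u\bd)$, so that $u$ is immersive near $\overline I$, and so that $\overline{I}\cap\overline{S_1^{\partial}(u)}=\emptyset$. The next point is to show that the interior partners of points of $\overline I$ stay in a compact subset of $B_1(0)$: if boundary points $z_\nu\in\overline I$ had interior partners $w_\nu\ra\partial\D$, passing to a limit would produce either a boundary self-intersection accumulating on $\overline I$ (excluded since $\overline I\cap\overline{S_1^{\partial}(u)}=\emptyset$) or a self-intersection sequence accumulating at an immersed boundary point (excluded because $u$ is a local embedding there). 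Discarding the finitely many pairs whose interior partner is critical, I obtain a \emph{compact} set $\Sigma\subset\overline I\times\overline{B_{\rho}(0)}$, with $\rho<1$, of immersed--immersed mixed pairs with $\pr_1\Sigma=\overline I$ and finite fibres (Proposition \ref{pointsfinite}).

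Near any $(z_0,w_0)\in\Sigma$ the interior preimage $u^{-1}(L)$ is an embedded arc $\gamma$ through $w_0$ (this is the local picture behind Lemma \ref{typetwo}, and uses that $L$ is totally real), and the matching $w$ is determined by $z$ through $u\restrict_{\gamma}$; hence $\pr_1$ is injective on a neighbourhood of $(z_0,w_0)$ in $\Sigma$. Covering the compact set $\Sigma$ by finitely many such neighbourhoods and applying the Baire property of the arc $\overline I$ to the surjection $\pr_1\co\Sigma\ra\overline I$, one of the pieces must project to a set that is dense in some open subarc $J\subset I$. Since on that piece $u(z)$ runs through $u(\gamma)$, continuity and compactness give $u(J)\subset u(\gamma)$: the boundary arc $J$ and the interior net-arc $\gamma$ carry the \emph{same} image in $L$.

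The main obstacle is the final step: turning this shared arc into a violation of simplicity. I would parametrise a half-disc $v_1\co(\D^+,[-1,1],0)\ra(M,L)$ on the $\D$-side of a point $z_1\in J$, and parametrise the two sides of $\gamma$ near its matching interior point $w_1$ by half-discs $v_2^{\pm}\co(\D^+,[-1,1],0)\ra(M,L)$ whose boundaries map into $u(\gamma)=u(J)\subset L$. Reparametrising so that the boundary maps agree, the relation $u(J)\subset u(\gamma)$ forces the boundary intersections of $v_1$ with $v_2^{\pm}$ to accumulate at $0$, so Lemma \ref{signofdelta} applies and assigns signs $\delta_{v_1,v_2^{+}}$ and $\delta_{v_1,v_2^{-}}$; as the two sides of $\gamma$ traverse the common arc in opposite directions these signs are opposite, so one of them is positive, exactly as in the proof of Lemma \ref{noembeddingintor1}. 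For that side Lemma \ref{isolembb} yields open neighbourhoods on which the images of $v_1$ and $v_2^{\pm}$ coincide, i.e.\ disjoint nonempty open subsets of $\D$ --- one clustered at the boundary point $z_1$, the other interior at $w_1$ --- with equal $u$-image. By Proposition \ref{equivcharofsimple} this contradicts simplicity of $u$. Hence $\MM$ is nowhere dense, completing the argument. The delicate points I expect to spend the most care on are the compactness and non-escape of interior partners in the second paragraph and the sign bookkeeping that guarantees a positive boundary intersection in the last.
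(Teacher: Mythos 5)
Your argument is correct in outline but takes a genuinely different route from the paper's. The paper argues in the opposite direction: it first uses weak $\partial$-simplicity to \emph{shrink} the offending boundary arc $I$ into $\inj(u\bd)$, so that all boundary partners disappear and one is left with the bare set-theoretic inclusion $u(I)\subset u\big(B_1(0)\big)$; it then localises to a single embedded interior sheet (Lemma \ref{okbranches}) and applies the virtual-boundary Carleman principle (Lemma \ref{isolemib}), which needs no totally real boundary condition on the interior piece and no identification of images along arcs. You instead isolate the set $\MM$ of boundary points with interior partners, and invest in compactness, the local net-arc structure of $u^{-1}(L)$, and a Baire argument to show that the boundary image actually coincides with the image of an interior net arc $\gamma$; this lets you reuse the two-sided sign dichotomy of Lemma \ref{noembeddingintor1} (via Lemmas \ref{signofdelta} and \ref{isolembb}) in place of Lemma \ref{isolemib}. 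Your route is longer but yields a sharper geometric picture (coincidence of $u(J)$ with $u(\gamma)$, and nowhere-density of $\MM$ rather than merely empty interior, which is all the Baire reduction actually needs); the paper's route buys brevity by never having to control \emph{where} the interior partners sit. Two points in your write-up need patching, though both are routine. First, discarding the pairs with critical interior partner does not by itself leave a compact $\Sigma$; the clean fix is to shrink $I$ at the outset so that $\overline I\cap\crt(u)=\emptyset$ (possible since $\crt(u)$ is finite), after which no partner of a point of $\overline I$ is critical. Second, the statement that $u^{-1}(L)$ is an embedded arc near $(z_0,w_0)$ rests on the Carleman step $d\tilde u(w_0)=0$ in Lemma \ref{typetwo}, which requires mixed intersections accumulating at $(z_0,w_0)$; this holds at accumulation points of $\Sigma$ (isolated points contribute only finitely many partners per neighbourhood, by finiteness of preimages), so you should run the covering over $\acc(\Sigma)\cap\Sigma$ and absorb the at most countably many remaining projections before invoking Baire.
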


\begin{proof}
  Before we come to the actual proof we remark
  that a holomorphic disc $u$ is not strongly $\partial$-simple
  if and only if $\inj(u)\cap\partial\D$ is not dense in $\partial\D$.
  This is the same as the complement $\partial\D\setminus\inj(u)$ has an interior point.
  Because of the fact that $\crit(u)$ is finite, $u$ is not strongly $\partial$-simple
  if and only if $S_1(u)\cap\partial\D$ has an interior point in $\partial\D$,
  see \eqref{dwithoutinju}.
  Hence, there exists a non-empty open subset $I$ of $\partial\D$ such that
  $u\restrict_I$ is an embedding,
  and such that $u(I)\subset u(\D\setminus I)$.
  
  Now the aim is to show that a weakly but not strongly $\partial$-simple holomorphic disc can not be simple.
  So we assume that the holomorphic disc $u$ from our preparatory remark is weakly $\partial$-simple as well.
  i.e.\ we assume in addition that $\inj(u\bd)$ is dense in $\partial\D$.
  By Proposition \ref{injuopenbound} this set is open in $\partial\D$ too,
  so that the intersection
  $\inj(u\bd)\cap I$ is an open and non-empty subset of $\partial\D$.
  But this means, we can assume that
  $u^{-1}\big(u(I)\big)\cap\partial\D=I$.
  In particular, this excludes the case $u(I)\subset u(\partial\D\setminus I)$.
  Consequently, the open subset $I$ of $\partial\D$ satisfies $u(I)\subset u\big(B_1(0)\big)$.
  It follows from Lemma \ref{okbranches} below that $u$ can not be simple.
\end{proof}

\begin{lem}
  \label{isolemib}
  Let $(M,J)$ denote an almost complex manifold of real dimension $2n$
  which contains a totally real submanifold $L$.
  Let 
  \begin{gather*}
    u:\big(B_1(0),0\big)\lra(M,p)
    \quad\text{{\it and}}\quad
    v:\big(\D^+,[-1,1],0\big)\lra(M,L,p)
  \end{gather*}
  be $J$-holomorphic embeddings.
  If $v([-1,1])\subset u\big(B_1(0)\big)$
  then there exists an open neighbourhood $V$ of zero in $\D^+$ such that
  $v(V)\subset u\big(B_1(0)\big)$.
\end{lem}

\begin{proof}
  The proof is essentially the same as for Lemma \ref{isolembb}.
  By \cite[Lemma 2.4.2]{mcdsal04} we can assume that the map $u$ satisfies
  $u(z)=(z,0,\ldots,0)$  for all $z\in B_1(0)$
  and the almost complex structure $J$ is equal to $i$ on $B_1(0)\times\set{0}$,
  where the splitting is taken w.r.t. $\C\times\C^{n-1}$.
  Writing $v=(v_1,\tilde{v})$ the projected map $\tilde{v}$
  has in general no totally real boundary condition.
  But using the condition $v([-1,1])\subset u\big(B_1(0)\big)$ as posted in the lemma
  we have $\tilde{v}([-1,1])=\set{0}$.
  By the same computation as in the proof of Lemma \ref{isolembb} we get
  \begin{gather*}
    0=
    \partial_s\tilde{v}+i\partial_t\tilde{v}+\tilde{B}\tilde{v}
  \end{gather*}
  for a suitable smooth (real) $\C^{n-1}$-endomorphism field $\tilde{B}$
  along an open neighbourhood $V$ in $\D^+$ about $0$.
  The relative Carleman similarity principle (see \cite[Theorem A.2]{abb04}) implies now
  that $\tilde{v}=0$ on $V$.
  Hence, the embedding $\varphi\define v_1$ is analytic on the interior of $V$.
  We get $v=u\circ\varphi$ on $V$,
  which proves the claim.
\end{proof}

With this observation done we can finish the proof of Proposition \ref{frombdrtoint2} with the following lemma.
The reason why we can not apply Lemma \ref{isolemib} directly is that we have to make sure
that $u(I)$ is contained completely in a sheet of $u\big(B_1(0)\big)$.

\begin{lem}
  \label{okbranches}
  Let $u$ be a $J$-holomorphic disc.
  If there exists an open non-empty subset $I$ of $\partial\D$ such that $u(I)\subset u\big(B_1(0)\big)$
  then $u$ is not simple.
\end{lem}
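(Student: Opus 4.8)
The plan is to contradict simplicity via Proposition \ref{equivcharofsimple} by producing two nonempty disjoint open subsets of $\D$ with the same image under $u$, and to reach that point by applying Lemma \ref{isolemib} after arranging that $u$ carries a suitable subarc of $I$ into a single embedded sheet of $u\big(B_1(0)\big)$. If $u$ is constant it is certainly not simple, so I assume $u$ is non-constant and may invoke Proposition \ref{pointsfinite}.

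First I would carry out the reductions. Since $\crt(u)$ is finite, I discard from $I$ the finitely many points lying in $\crt(u)$ and pass to a subarc; for every $z$ in the closure of this subarc the value $u(z)$ is then not a critical value, so every point of $u^{-1}\big(u(z)\big)$ is immersed and $u$ is a local embedding near it. Shrinking $I$ further I may also assume that $u$ restricted to the closed subarc is injective. By hypothesis each such $u(z)$ lies in $u\big(B_1(0)\big)$, so $u^{-1}\big(u(z)\big)$ contains at least one interior point, necessarily immersed.

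The hard part will be to find a subarc of $I$ whose image lies in one embedded interior sheet; this is exactly the difficulty flagged above, since the pointwise inclusion $u(I)\subset u\big(B_1(0)\big)$ does not by itself confine the interior preimages to a fixed sheet. I would handle it by a Baire-category argument in two steps. Setting $G_m\define\set{(z,w)\in\overline I\times\overline{B_{1-1/m}(0)}\suchthatb u(z)=u(w)}$, each projection $I_m\define\pr_1 G_m$ is closed, being the continuous image of a compact set, and $\bigcup_m I_m\supset I$ because every interior preimage lies in some $\overline{B_{1-1/m}(0)}$; since $\overline I$ is a Baire space, some $I_m$ has nonempty interior and hence contains a subarc $I_0$, over which an interior preimage always exists inside the fixed compact set $K\define\overline{B_{1-1/m}(0)}\subset B_1(0)$, all such preimages being immersed. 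Covering the compact set $G_m\cap\big(\overline{I_0}\times K\big)$ by finitely many neighbourhoods $\overline{N_i}\subset B_1(0)$ on which $u$ is an embedding, I obtain finitely many compact sheets $S_i\define u\big(\overline{N_i}\big)$; the closed sets $\set{z\in\overline{I_0}\suchthatb u(z)\in S_i}$ cover $\overline{I_0}$, and a finite union of closed sets filling an interval must have a member with nonempty interior, say for $S_{i_0}$. This produces a subarc $I'$ with $u(I')\subset S_{i_0}=u\big(\overline{N_{i_0}}\big)$, a single embedded interior sheet.

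Finally I would assemble the contradiction. Fixing $z_0$ in the interior of $I'$ and an interior preimage $w_0\in N_{i_0}$ with $p\define u(w_0)=u(z_0)$, I reparametrise the sheet as a $J$-holomorphic embedding $u_0\colon(B_1(0),0)\ra(M,p)$ and a boundary neighbourhood of $z_0$ as a $J$-holomorphic embedding $v\colon(\D^+,[-1,1],0)\ra(M,L,p)$ with $v([-1,1])\subset u(I')\subset u_0\big(B_1(0)\big)$. Lemma \ref{isolemib} then gives a neighbourhood $V$ of $0$ in $\D^+$ with $v(V)\subset u_0\big(B_1(0)\big)$, and its proof in fact yields an analytic $\varphi$ with $v=u_0\circ\varphi$ on $V$. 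Transporting $V$ and $\varphi(V)$ back to $\D$ produces a relatively open set $U_1\subset\D$ meeting $\partial\D$ near $z_0$ and an open set $U_2\subset B_1(0)$ near $w_0$, which may be taken disjoint since $z_0\in\partial\D$ while $w_0\in B_1(0)$, and which satisfy $u(U_1)=v(V)=u_0\big(\varphi(V)\big)=u(U_2)$. By Proposition \ref{equivcharofsimple} this shows that $u$ is not simple, as claimed.
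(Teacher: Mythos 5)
Your argument is correct in substance but reaches the single-sheet localisation by a genuinely different route than the paper. The paper splits on the counting function $\ell(z)=\#\big(u^{-1}(u(z))\cap B_1(0)\big)$: if $\ell\geq 2$ throughout $I$ one extracts two distinct interior accumulation points and concludes directly with Lemma \ref{weakversion}, while if $\ell(z_0)=1$ the unique interior preimage $z_1$ already singles out the sheet, and a sequence argument (using Lemma \ref{nicechart} to rule out a mixed intersection at an immersed boundary point) traps $u(I)$ in $u\big(B_{\varepsilon}(z_1)\big)$ after shrinking $I$. Your double Baire-category argument replaces both cases at once: it is purely point-set topological, avoids Lemma \ref{weakversion} and the mixed-intersection discussion entirely, and is arguably more robust. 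The price is a little extra care in the covering step: choose each $N_i$ so that $u$ embeds an open neighbourhood of $\overline{N_{i_0}}$ (for instance half-radius balls), so that $u(I')\subset u\big(\overline{N_{i_0}}\big)$ lies in an open embedded image, and shrink $I'$ once more so that $u(\overline{I'})$ lands in $u_0\big(B_1(0)\big)$ for a ball centred at $w_0$; these are routine adjustments.

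The one statement that needs repair is the final appeal to Proposition \ref{equivcharofsimple}: the set $U_2$, being the image of the half-disc $V$ under $\varphi$ transported into $B_1(0)$, contains its edge $\varphi(V\cap[-1,1])$ in its topological boundary and is therefore not open in $\D$, so the proposition does not apply verbatim. This is harmless: every point of the nonempty, relatively open set $U_1$ is a non-injective point of $u$ (its value is attained at a distinct point of $U_2$), so $U_1\subset S_1(u)\subset\D\setminus\inj(u)$ and $\inj(u)$ fails to be dense, which is non-simplicity by Definition \ref{verydefofsimpledisc}. With that substitution the proof is complete.
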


\begin{proof}
  By finiteness of $\crt(u)$ we can assume that $I\cap\crt(u)=\emptyset$.
  Additionally, we can assume that $u\restrict_I$ is an embedding.
  In view of Proposition \ref{pointsfinite}
                                                            \begin{gather*}
                                                              \ell(z)\define
                                                              \#\Big(u^{-1}\big(u(z)\big)\cap B_1(0)\Big)
                                                            \end{gather*}
  is for all $z\in I$ a (finite) number.
  
  We claim that we can take $z_0\in I$ in such a way that $\ell(z_0)=1$.
  Otherwise, we have that $\ell(z)\geq 2$ for all $z\in I$.
  In particular, we find sequences $z_{\nu}\ra z_0$ in $I$ and $w^1_{\nu}\ra w_1$, $w^2_{\nu}\ra w_2$
  in $\D$ such that $w^1_{\nu}\neq w^2_{\nu}$ lie in $B_1(0)$
  and $u(z_{\nu})=u(w^1_{\nu})=u(w^2_{\nu})$ for all $\nu\in\N$.
  Because of $I\cap\crt(u)=\emptyset$ the points $z_0$, $w_1$ and $w_2$ are pairwise distinct.
  Further, non of the $w_j$ is contained in $\partial\D$,
  because otherwise by Lemma \ref{nicechart} we would get $du(w_j)=0$.
  But this is not possible because of $I\cap\crt(u)=\emptyset$ and $u(z_0)=u(w_j)$.
  So both points $w_1$ and $w_2$ are in $B_1(0)$.
  Lemma \ref{weakversion} shows now that $u$ is not simple and we are done.
  So we are left with the case, where $\ell(z_0)=1$.
  
  Fix $z_0\in I$ with $\ell(z_0)=1$.
  There exist a positive real number $\varepsilon$ and
  an interior point $z_1$ in $B_1(0)$,
  such that $B_{\varepsilon}(z_1)\subset B_1(0)$,
                                                            \begin{gather}
                                                              \label{allinteriors}
                                                              u^{-1}\big(u(z_0)\big)\cap B_1(0)=\set{z_1}
                                                            \end{gather}
  and $u\restrict_{B_{\varepsilon}(z_1)}$ is an embedding.
  We will show that we can shrink the interval $I\subset\partial\D$
  such that $u(I)$ is contained in $u\big(B_{\varepsilon}(z_1)\big)$.
  Suppose there is a sequence $z^{\nu}$ in $I$ such that $z^{\nu}\ra z_0$ and
                                                            \begin{gather}
                                                              \label{notinthebranches}
                                                              u(z^{\nu})\notin
                                                              u\big(B_{\varepsilon}(z_1)\big)
                                                            \end{gather}
  for all $\nu\in\N$.
  Because of $u(I)\subset u\big(B_1(0)\big)$ we find a sequence
                                                            \begin{gather*}
                                                              w^{\nu}\in
                                                              B_1(0)\setminus
                                                              B_{\varepsilon}(z_1)
                                                              \quad\text{such that}\quad
                                                              u(z^{\nu})=u(w^{\nu})
                                                            \end{gather*}
  for all $\nu\in\N$.
  We can assume that
                                                            \begin{gather*}
                                                              w^{\nu}\lra w_0
                                                              \quad\text{in}\quad
                                                              \D\setminus
                                                              B_{\varepsilon}(z_1).
                                                            \end{gather*}
  Hence, $u(z_0)=u(w_0)$ and with \eqref{allinteriors} we have $w_0\in\partial\D$, a mixed boundary-intersection.
  Again with Lemma \ref{nicechart} this implies that $du(w_0)=0$, i.e.\ that $z_0\in\crt(u)$.
  This is a contradiction.
  So \eqref{notinthebranches} must be wrong and we obtain
                                                            \begin{gather*}
                                                              u(I)\subset
                                                              u\big(B_{\varepsilon}(z_1)\big).
                                                            \end{gather*}
  Now the claim follows from Lemma \ref{isolemib}.
\end{proof}

Together with Proposition \ref{frombdrtoint} we obtain:

\begin{cor}
  \label{frombdrtoint3}
  For a simple holomorphic disc both notions of simplicity along the boundary coincide;
  the disc is weakly $\partial$-simple
  if and only if the disc is strongly $\partial$-simple.
\end{cor}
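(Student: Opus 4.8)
The plan is to prove the stated equivalence for a fixed simple $J$-holomorphic disc $u$ by treating the two implications separately, since both are already essentially contained in the results assembled earlier in this section. The two competing notions to compare are weak $\partial$-simplicity (density of $\inj(u\bd)$ in $\partial\D$) and strong $\partial$-simplicity (density of $\inj(u)\cap\partial\D$ in $\partial\D$), and the goal is to show that under the standing assumption of simplicity these are equivalent.

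For the implication from strong to weak I would simply invoke the set-theoretic containment \eqref{weakvsstrong}, namely $\inj(u)\cap\partial\D\subset\inj(u\bd)$. This inclusion holds for \emph{any} holomorphic disc and requires no simplicity hypothesis: if the smaller set $\inj(u)\cap\partial\D$ is dense in $\partial\D$, then so is the larger set $\inj(u\bd)$. Hence every strongly $\partial$-simple disc is automatically weakly $\partial$-simple, and this direction is immediate.

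The converse is where the hypothesis of simplicity genuinely enters, and here I would appeal directly to Proposition \ref{frombdrtoint2}, which asserts precisely that a disc that is both simple and weakly $\partial$-simple is strongly $\partial$-simple. Combining this with the previous paragraph yields the desired equivalence for any simple disc. I would also record, for coherence, that Proposition \ref{frombdrtoint} shows strong $\partial$-simplicity already forces simplicity, so that ``simple'' is the natural standing hypothesis under which the two boundary notions can meaningfully be compared.

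Since both directions reduce to statements proved earlier, there is no new hard step to carry out; the only work is to assemble \eqref{weakvsstrong}, Proposition \ref{frombdrtoint2}, and Proposition \ref{frombdrtoint} correctly. The real obstacle was discharged inside the proof of Proposition \ref{frombdrtoint2}, whose nontrivial input comes through Lemma \ref{okbranches} and the relative Carleman similarity principle of Lemma \ref{isolemib}, which together rule out the degenerate boundary behaviour (a boundary arc whose image lies inside $u(B_1(0))$); once that mechanism is in place, the corollary follows formally.
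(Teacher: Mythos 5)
Your proposal is correct and matches the paper's own (implicit) argument exactly: the forward direction is the inclusion $\inj(u)\cap\partial\D\subset\inj(u\bd)$ from \eqref{weakvsstrong}, and the converse is precisely Proposition \ref{frombdrtoint2}, with Proposition \ref{frombdrtoint} cited only to justify the standing hypothesis. Nothing is missing.
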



\section[Self-matching discs]{Self-matching holomorphic discs\label{notselfmatching}}

In this section we give a second characterisation for a simple holomorphic disc to have the half-annulus property.
The obstruction to have it is essentially explained by the following example:
Consider the holomorphic half-discs defined by
$u(z)=z$ and $v(z)=-z$ for all $z\in\D^+$.
The images of $u$ and $v$ intersect along the real line
such that they match to get the analytic function $w(z)=z$ for $z\in\D$.

More generally we have the following gluing result:

\begin{lem}
  \label{smoothmatching}
  Let $M$ be an almost complex manifold and $L$ be a totally real submanifold of $M$.
  Consider embedded holomorphic half-discs
  \[
  u,v:\big(\D^+,[-1,1],0\big)\lra(M,L,p)
  \]
  and assume that $v([-1,1])=u([-1,1])$.
  If the sign of the boundary-intersection $\delta_{u,v}$ at $(0,0)$ is negative
  then there exist a smooth map $w:(\D,0)\lra(M,p)$,
  a diffeomorphism $\varphi$ between pointed neighbourhoods in $\C$ of zero,
  and an open neighbourhood $U$ of zero in $\C$
  such that $w(U^+)=u(U^+)$ and $w(U^-)=v\circ\varphi(U^-)$,
  where $U^{\pm}\define\D^{\pm}\cap U$.
\end{lem}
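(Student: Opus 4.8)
The plan is to reduce everything to a completely flat model in which both half-discs are parametrised by the first complex coordinate, one lying above and one below the real axis, and then to recognise the glued object as the single analytic map $z\mapsto(z,0,\ldots,0)$. First I would normalise $u$: by Lemma \ref{nicechart} we may pass to a chart about $p$ in which $L=\R^n$, $u(z)=(z,0,\ldots,0)\in\C\times\C^{n-1}$ for $z\in\D^+$, and $J=i$ along the coordinate slice. It is important here to arrange $J(z,0)=i$ not merely for $z\in\D^+$ but for all $z$ in a full neighbourhood of $0$ in $\C\times\set{0}$; this costs nothing, since the condition only prescribes $J$ along the two-dimensional submanifold $\C\times\set{0}$ and already holds with value $i$ along $[-1,1]$, so the flattening diffeomorphism may be extended across the lower half of the slice. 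Writing $v=(v_1,\tilde{v})$ with $\tilde{v}=(v_2,\ldots,v_n)$, the hypothesis $v([-1,1])=u([-1,1])=[-1,1]\times\set{0}$ gives $v_1([-1,1])\subset\R$ and, crucially, $\tilde{v}([-1,1])=0$.

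The heart of the argument is to show $\tilde{v}\equiv0$ near $0$. Since $J(v_1,0)=i$ on the whole slice, the computation of Lemma \ref{isolembb} goes through verbatim: expanding $J(v)-i$ by Taylor's formula in the $\tilde{v}$-direction and using $J$-holomorphicity as in \eqref{cspforbwoproj} yields a linear Cauchy--Riemann type equation $\partial_s\tilde{v}+i\partial_t\tilde{v}+\tilde{B}\tilde{v}=0$ on $\D^+$ for a smooth real endomorphism field $\tilde{B}$. The relative Carleman similarity principle (see \cite[Theorem A.2]{abb04}) then provides $\tilde{v}=Cf$ with $C$ continuous, invertible, $C(0)=\id$, and $f$ analytic and real along $\R$. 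Now $\tilde{v}([-1,1])=0$ forces $f$ to vanish on the real segment; by Schwarz reflection $f$ extends analytically across $\R$, and an analytic function vanishing on a segment vanishes identically. Hence $\tilde{v}\equiv0$ in a neighbourhood of $0$, so $v=(v_1,0)$ there. In contrast with Lemma \ref{isolembb}, the sign of $\delta$ plays no role in this step: it is the vanishing boundary datum $\tilde{v}|_{[-1,1]}=0$, rather than an accumulation of interior intersection points, that kills $\tilde{v}$.

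With $\tilde{v}\equiv0$ the equation reduces to $\partial_s v_1+i\partial_t v_1=0$, so $v_1$ is holomorphic near $0$ with $v_1(0)=0$ and $v_1(\R)\subset\R$. Lemma \ref{signofdelta} gives $du(0)=\delta\,dv(0)$, whence $v_1'(0)=\partial_s v_1(0)=1/\delta$, which is \emph{negative} precisely because $\delta_{u,v}<0$. Therefore $v_1$ is a local biholomorphism carrying $\D^+$ into the lower half-plane, and its inverse $\varphi\define v_1^{-1}$ is an analytic diffeomorphism between pointed neighbourhoods of $0$ that maps the lower half-disc $U^-$ into $\D^+$. Finally I would set $w(z)\define(z,0,\ldots,0)$ on a full neighbourhood $U$ of $0$ (in the manifold, $w=\Phi\circ(z\mapsto(z,0,\ldots,0))$), which is manifestly smooth, satisfies $w(0)=p$, and agrees with $u$ on $U^+$. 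On $U^-$ one computes $v\circ\varphi(z)=\big(v_1(\varphi(z)),\tilde{v}(\varphi(z))\big)=(z,0,\ldots,0)=w(z)$, so $w(U^+)=u(U^+)$ and $w(U^-)=v(\varphi(U^-))$ as required; since $w$ is the single analytic map $z\mapsto(z,0,\ldots,0)$, no seam estimate along $[-1,1]$ is needed.

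I expect the main obstacle to be the step $\tilde{v}\equiv0$, and within it the two technical points of arranging $J=i$ on the full slice so that $J(v_1,0)=i$ even though $\delta<0$ drives $v_1$ into the lower half-plane, and of upgrading the boundary vanishing of the Carleman factor $f$ to identical vanishing by reflection. The role of the sign hypothesis is conceptually transparent but worth isolating: a positive sign would send $v_1$ into the upper half-plane and merely reproduce the overlapping conclusion of Lemma \ref{isolembb}, whereas the negative sign places the reparametrised image of $v$ on the opposite side of the seam, which is exactly what makes the two branches fit together into one smooth disc.
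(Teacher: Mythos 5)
The argument breaks at the very first normalisation, and the error propagates into the main step. You claim it ``costs nothing'' to arrange $J(z,0)=i$ for all $z$ in a \emph{full} neighbourhood of $0$ in $\C\times\set{0}$, rather than only for $z\in\D^+$ as Lemma \ref{nicechart} provides. But prescribing $J=i$ along a two-dimensional slice of a chart forces that slice to be $J$-invariant, i.e.\ forces $\Phi(\C\times\set{0})$ to be a $J$-holomorphic surface in $M$ extending $u(\D^+)$ past $L$. Producing such an extension is precisely the Schwarz reflection problem that the paper explicitly refuses to assume solvable (indeed it is essentially the content of this lemma together with the remark that follows it), so the normalisation is circular, not free. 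Without $J(v_1(z),0)=i$ at points $v_1(z)$ in the \emph{lower} half-plane --- and the negative sign drives $v_1$ exactly there --- the expansion $J(v)-i=\int_0^1D_2J(v_1,\tau\tilde v)\set{\tilde v}\,d\tau$ acquires the extra term $J(v_1,0)-i$, which is not of the form $\tilde B\tilde v$, and the relative Carleman similarity principle no longer applies to $\tilde v$. Consequently the conclusion $\tilde v\equiv0$ is unjustified; it is in fact false for a general chart as in Lemma \ref{nicechart}, since it would assert that the image of $v$ coincides with the lower half of the slice $\Phi(\C\times\set{0})$, which is an arbitrary smooth extension of $u(\D^+)$ depending on the choice of chart.

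The paper proves something weaker and sufficient: using only $J=i$ on $\D^+\times\set{0}$ (hence $J(v)=i$ \emph{at} points of the seam, where $v\in[-1,1]\times\set{0}$) together with $\tilde v\restrict_{[-1,1]}=0$, an induction on the Cauchy--Riemann equation shows that all partial derivatives of $\tilde v$ vanish along $[-1,1]$. This infinite-order vanishing at the seam is exactly what is needed for $w$, defined as $u$ on $U^+$ and $v\circ\varphi$ on $U^-$, to be smooth across $[-1,1]$; $w$ need not be, and in general is not, the flat analytic map $z\mapsto(z,0,\ldots,0)$ on all of $U$. Your identification of $\varphi=v_1^{-1}$ and the sign analysis ($\partial_sv_1(0)=1/\delta<0$, so $\varphi$ carries $U^-$ into $\D^+$) are correct and agree with the paper; what must be replaced is the reduction of the seam-matching to $\tilde v\equiv0$, in favour of the order-by-order matching of derivatives along the seam.
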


\begin{proof}
  As in the proof of Lemma \ref{isolembb}
  (using the notation from there as well)
  we can assume that $u$ is flat,
  i.e.\ $u(z)=(z,0,\ldots,0)$ in $\C\times\C^{n-1}$ for all $z\in\D^+$,
  and that $v_1$ is an embedding.
  By assumption $\delta_{u,v}$ is negative
  so that we can assume $v_1(z)\in\set{\im(z)<0}$ for all $z\in\set{\im(z)>0}\cap\D^+$.
  Moreover, our first assumption translates into $v_1([-1,1])=[-1,1]$ and $\tilde{v}([-1,1])=\set{0}$.
  Consequently,
  \[
  v\circ\varphi(z)=\big(z,\tilde{v}\circ\varphi(z)\big)
  \]
  with
  \[
  \varphi\define v_1^{-1}:v_1(\D^+)\lra\D^+
  \]
  for all $z\in\D^+$, for which $\abs{z}$ small enough.
  Because of $J=i$ on $\D^+\times\set{0}$ and the boundary condition $\tilde{v}([-1,1])=\set{0}$
  all partial derivatives of $\tilde{v}$ vanish along $[-1,1]$,
  as an induction shows.
  This implies that the map $w$ defined by
                                                            \begin{gather*}
                                                              w(z)\define
                                                              \begin{cases}
                                                                u(z)=(z,0) &
                                                                \quad\text{ for all}\quad
                                                                z\in\set{\im(z)\geq0}\cap B_{\varepsilon}(0)
                                                                \\
                                                                v\circ\varphi(z) &
                                                                \quad\text{ for all}\quad
                                                                z\in\set{\im(z)<0}\cap B_{\varepsilon}(0)
                                                              \end{cases}
                                                            \end{gather*}
  is a smooth embedding for $\varepsilon>0$ small enough.
  This proves the claim.
\end{proof}

\begin{rem}
  On the open set $U$ we can define a smooth complex structure by
                                                            \begin{gather*}
                                                              j\define
                                                              dw^{-1}_{w}\circ
                                                              J_{w}\circ
                                                              dw
                                                            \end{gather*}
  such that $j=i$ on $U^+$.
  By the theorem of Newlander-Nirenberg
  (see \cite[Theorem E.3.1]{mcdsal04})
  we find a diffeomorphism $\psi$ of $U$ such that $w\circ\psi$
  is holomorphic on $U$.
\end{rem}

In other words, if for a holomorphic disc $u$ there are two disjoint boundary segments
on which $u$ is immersed and takes the same values
there are two possibilities for $u$.
Either, the sign is negative
and (by the proceeding lemma)
the holomorphic disc $u$ is self-gluing along the boundary segments,
or the sign is positive and (by Lemma \ref{isolembb}) the holomorphic disc $u$ overlaps.
In either case the holomorphic disc $u$ can not be weakly simple along the boundary:

\begin{prop}
  \label{weakisnotselfm}
  A $J$-holomorphic disc $u$ is weakly $\partial$-simple if and only if there are no two nonempty
  disjoint open subsets $I_1$ and $I_2$ of $\partial\D$
  such that $u(I_1)=u(I_2)$.
\end{prop}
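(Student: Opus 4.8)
The plan is to prove the two implications separately, modelling the argument on the proof of Proposition~\ref{equivcharofsimple} but replacing the interior unique continuation of Lemma~\ref{weakversion} by the boundary results of Section~\ref{sec:boundint}. The easy implication is immediate: if there are nonempty disjoint open sets $I_1,I_2\subset\partial\D$ with $u(I_1)=u(I_2)$, then every $z\in I_1$ admits a $w\in I_2$ with $u(w)=u(z)$ and $w\neq z$ (as $I_1\cap I_2=\emptyset$), so no point of $I_1$ lies in $\inj(u\bd)$. Hence $\inj(u\bd)$ omits the open arc $I_1$, cannot be dense, and $u$ fails to be weakly $\partial$-simple.

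For the converse I argue by contraposition. Assuming $u$ is not weakly $\partial$-simple, \eqref{dwithoutinjubdr} together with the finiteness of $\crit(u\bd)$ shows that $S_1^{\partial}(u)$ has interior in $\partial\D$, so I fix an open arc $I\subset S_1^{\partial}(u)$. Shrinking $I$ and using that $\crit(u)$ and $\crt(u)$ are finite (Proposition~\ref{pointsfinite}), I arrange that $\overline I\cap\crt(u)=\emptyset$ — so $u(z)$ is a regular value for each $z\in\overline I$, whence every boundary partner of such a $z$ is again immersed — and that $u\restrict_I$ is an embedding. The decisive set-theoretic step is then to locate a genuine accumulation of boundary self-intersections: the graph $K=S_{\partial}(u)\cap(I\times\partial\D)$ is relatively closed (by \eqref{clousureofsubdry}, since $\Delta_{\crit(u\bd)}$ avoids $I\times\partial\D$) with finite nonempty fibres over the uncountable arc $I$, hence is itself uncountable and therefore not discrete, so it has an accumulation point $(z_0,w_0)$. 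By the normalisations $z_0,w_0$ are non-critical, and the local embedding property of $u$ near $z_0$ forces $w_0\neq z_0$; thus $(z_0,w_0)$ is an accumulation point of boundary self-intersections, to which Lemma~\ref{signofdelta} applies after parametrising $\D$ near $z_0$ and near $w_0$ by $J$-holomorphic half-discs $u',v'$.

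Lemma~\ref{signofdelta} now gives tangency, $du'(0)=\delta\,dv'(0)$ with $\delta\neq0$ real, and hence a well-defined sign $\delta_{u',v'}$. If this sign is positive, Lemma~\ref{isolembb} provides neighbourhoods with $u'(U)=v'(V)$ and an analytic $\varphi$ preserving $\R$ with $v'=u'\circ\varphi$; restricting this factorisation to the real axis yields disjoint open arcs near $z_0$ and near $w_0$ with equal image. If the sign is negative one is in the self-gluing situation of Lemma~\ref{smoothmatching}, in which the two boundary branches trace the same curve in $L$, again producing the two disjoint arcs. In either case the conclusion contradicts the standing assumption, completing the proof.

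I expect the negative-sign case to be the main obstacle. To invoke Lemma~\ref{smoothmatching} one must first verify its hypothesis $v'([-1,1])=u'([-1,1])$, i.e.\ that the two tangent boundary curves genuinely coincide on an arc rather than meeting along an accumulating, nowhere-dense set. This does not follow from smoothness, since the boundary traces of $J$-holomorphic half-discs are only smooth and not analytic, so it requires a boundary unique continuation argument: after flattening $u'$ via Lemma~\ref{nicechart} and applying the relative Carleman similarity principle to the normal component $\tilde v$ of $v'$, one should show $\tilde v$ vanishes along an accumulating boundary sequence and then, by Schwarz reflection of the resulting analytic factor across $\R$, that $\tilde v\equiv 0$ near $0$. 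The delicate point — and the genuine difference from the positive-sign case of Lemma~\ref{isolembb} — is that flattening $u'$ no longer makes $J$ standard along the image of $v'$, which now lies on the opposite side of the boundary, so obtaining the clean linear Cauchy–Riemann equation for $\tilde v$ that drives this unique continuation is exactly the step that must be treated with care.
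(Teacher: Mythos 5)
Your forward direction and the overall architecture of the converse (reduce, via \eqref{dwithoutinjubdr} and finiteness of $\crt(u)$, to an accumulation point of boundary self-intersections at non-critical points, then run the sign dichotomy of Lemma \ref{signofdelta}) are sound, and your positive-sign branch via Lemma \ref{isolembb} is exactly right. The genuine gap is the negative-sign branch. Lemma \ref{smoothmatching} cannot simply be invoked there: its hypothesis $v'([-1,1])=u'([-1,1])$ is, up to notation, the very conclusion you are trying to establish, and deriving it would require a boundary unique-continuation statement for two half-discs meeting with negative sign. No such statement is proved in the paper, and your sketch of how to obtain one founders on exactly the point you flag yourself: after flattening $u'$ the structure $J$ is standard only along $\D^+\times\set{0}$, while the first component of $v'$ now maps into the lower half-plane, so the linearisation that feeds the relative Carleman similarity principle in the proof of Lemma \ref{isolembb} is no longer justified. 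Declaring this ``the step that must be treated with care'' leaves the proof incomplete at its decisive point.

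The paper's proof avoids the negative-sign case altogether by a set-theoretic case split on $k(z)\define\#\big(u^{-1}(u(z))\cap(\partial\D\setminus I)\big)$. If some $z_0\in I$ has $k(z_0)\geq2$, one obtains three accumulating intersection pairs $(z_0,w_1)$, $(z_0,w_2)$, $(w_1,w_2)$ whose signs are $\sign(\delta_1)$, $\sign(\delta_2)$ and $\sign(\delta_2/\delta_1)$; their product is $+1$, so at least one pair has positive sign and Lemma \ref{isolembb} alone produces the overlapping arcs. If instead $k(z_0)=1$, no unique continuation is needed at all: a sequence/compactness argument shows that, after shrinking $I$, all partners of points of $I$ lie on a single embedded arc $I_{\varepsilon}(z_1)\subset\partial\D\setminus I$ with $u(I)\subset u\big(I_{\varepsilon}(z_1)\big)$, and then $S\define\big(u\restrict_{I_{\varepsilon}(z_1)}\big)^{-1}\big(u(I)\big)$ is the desired second arc, disjoint from $I$ and with $u(S)=u(I)$. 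To repair your argument you should either adopt this case split or supply the missing negative-sign boundary unique continuation as a separate lemma.
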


\begin{proof}
  Recall that weak $\partial$-simplicity is equivalent to the fact that $\inj(u\bd)$ is dense in $\partial\D$,
  i.e.\ the complement $\partial\D\setminus\inj(u\bd)$ has no interior point.
  Therefore, (and by finiteness of $\crit(u\bd)$)
  $u$ is not weakly simple along the boundary if and only if
  $S_1^{\partial}(u)$ has an interior point in $\partial\D$,
  see \eqref{dwithoutinjubdr},
  i.e.\ if there exists an open, non-empty, and connected subset $I$ of $\partial\D$
  such that
                                                            \begin{gather}
                                                              \label{startingpoint}
                                                              u(I)\subset
                                                              u(\partial\D\setminus I).
                                                            \end{gather} 
  Therefore it is enough to show that if \eqref{startingpoint} holds true
  we find nonempty
  disjoint open subsets $I_1$ and $I_2$ of $\partial\D$
  such that $u(I_1)=u(I_2)$.
  
  So let us assume \eqref{startingpoint}.
  In addition we can assume
  that $I\cap\crt(u)=\emptyset$
  and that $u\restrict_I$ is an embedding.
  In view of Proposition \ref{pointsfinite}
                                                            \begin{gather*}
                                                              k(z)\define
                                                              \#\Big(u^{-1}\big(u(z)\big)\cap (\partial\D\setminus I)\Big)
                                                            \end{gather*}
  is a (finite) number for any $z\in I$.
  
  If $k(z_0)\geq2$ for some $z_0\in I$,
  we find sequences $z_{\nu}\ra z_0$ in $I$ and $w^1_{\nu}\ra w_1$, $w^2_{\nu}\ra w_2$
  in $\partial\D\setminus I$ such that $w^1_{\nu}\neq w^2_{\nu}$
  and $u(z_{\nu})=u(w^1_{\nu})=u(w^2_{\nu})$ for all $\nu\in\N$.
  Because of $I\cap\crt(u)=\emptyset$ the points $z_0$, $w_1$ and $w_2$ are pairwise distinct.
  Hence, there are $3$ intersection pairs
                                                            \begin{gather*}
                                                              \bigset{
                                                              (z_0,w_1),
                                                              (z_0,w_2),
                                                              (w_1,w_2)}
                                                          \end{gather*}
  such that Lemma \ref{isolembb} applies at least to $2$ of them.
  Consequently, the overlapping regions already imply the existence of $I_1$ and $I_2$ as stated in the proposition.
  
  So we are left with the case $z_0\in I$ and $k(z_0)=1$.
  We claim that there is an open subset $S$ of $\partial\D$ such that $u\restrict_S$ is an embedding,
  $I\cap S=\emptyset$ and $u(I)=u(S)$.
  Now, there exist a positive real number $\varepsilon$ and
  a boundary point $z_1$ in $\partial\D\setminus\set{z_0}$
  such that $u(z_0)=u(z_1)$,
                                                            \begin{gather*}
                                                              I_{\varepsilon}(z_1)\define\partial\D\cap B_{\varepsilon}(z_1)
                                                            \end{gather*}
  is contained in $\partial\D\setminus I$,
  and $u\restrict_{I_{\varepsilon}(z_1)}$ is an embedding.
  We will show that we can shrink the interval $I\subset\partial\D$
  such that $u(I)$ is contained in $u\big(I_{\varepsilon}(z_1)\big)$.
  Suppose that there is a sequence $z^{\nu}$ in $I$ such that $z^{\nu}\ra z_0$ and
                                                            \begin{gather}
                                                              \label{notinthebranchesbdry}
                                                              u(z^{\nu})\notin
                                                              u\big(I_{\varepsilon}(z_1)\big)
                                                            \end{gather}
  for all $\nu\in\N$.
  Because of \eqref{startingpoint} we find a sequence
                                                            \begin{gather*}
                                                              w^{\nu}\in
                                                              \partial\D\setminus
                                                              \big(I\cup  I_{\varepsilon}(z_1)\big)
                                                              \quad\text{such that}\quad
                                                              u(z^{\nu})=u(w^{\nu})
                                                            \end{gather*}
  for all $\nu\in\N$.
  We can assume that
                                                            \begin{gather*}
                                                              w^{\nu}\lra w_0
                                                              \quad\text{in}\quad
                                                              \partial\D\setminus
                                                              \big(I\cup  I_{\varepsilon}(z_1)\big).
                                                            \end{gather*}
  Hence, $u(z_0)=u(w_0)$ but $w_0\notin\set{z_0,z_1}$
  implying that $k(z_0)\geq 2$.
  In view of \eqref{notinthebranchesbdry} this contradiction shows
                                                            \begin{gather*}
                                                              u(I)\subset
                                                              u\big(I_{\varepsilon}(z_1)\big).
                                                            \end{gather*}
  The claim follows then by setting
                                                            \begin{gather*}
                                                              S\define
                                                              \Big(u\restrict_{I_{\varepsilon}(z_1)}\Big)^{-1}
                                                              \big(u(I)\big).
                                                            \end{gather*}
  This finishes the proof.
\end{proof}

We have shown that 
any weakly $\partial$-simple holomorphic disc $u$ is not self-matching
and vice versa.
Here is the relevant definition:

\begin{defi}
  \label{def:selfm}
  We will say that a $J$-holomorphic disc $u$ is {\bf self-matching}
  if there are non-empty open connected disjoint subsets
  $I_1$ and $I_2$ of $\partial\D\setminus\crit(u\bd)$
  such that $u(I_1)=u(I_2)$.
\end{defi}

This terminology (and Proposition \ref{frombdrtoint2}) finally gives us a criterion for a holomorphic disc to
have the half-annulus property,
i.e.\ to be simple along the boundary.
It says that in order to verify the half-annulus property
it is the same to verify simplicity (in the sense of Proposition \ref{equivcharofsimple})
for interior and boundary points separately.

\begin{cor}
  \label{onself-matching3}
  A $J$-holomorphic disc is strongly $\partial$-simple if and only if
  it is simple and not self-matching.
\end{cor}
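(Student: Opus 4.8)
The plan is to recognise Corollary \ref{onself-matching3} as an assembly of the implications already proved, once one identifies the condition ``not self-matching'' with weak $\partial$-simplicity.

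First I would establish the equivalence
\[
u\text{ is weakly }\partial\text{-simple}
\quad\Longleftrightarrow\quad
u\text{ is not self-matching}.
\]
By Proposition \ref{weakisnotselfm}, weak $\partial$-simplicity is equivalent to the absence of two nonempty disjoint open subsets $I_1,I_2\subset\partial\D$ with $u(I_1)=u(I_2)$, whereas Definition \ref{def:selfm} calls $u$ self-matching precisely when such sets exist that are in addition connected and contained in $\partial\D\setminus\crit(u\bd)$. The forward implication is immediate, since connected subsets avoiding the critical locus are in particular open and disjoint. For the converse I would use finiteness of $\crit(u\bd)=\crit(u)\cap\partial\D$ from Proposition \ref{pointsfinite} together with the fact that every open subset of $\partial\D$ decomposes into open arcs; inspecting the construction in the proof of Proposition \ref{weakisnotselfm} shows that the witnessing sets it produces (the shrunken arc $I$ and its homeomorphic copy $S$) are already connected and free of critical points, so they meet the requirements of Definition \ref{def:selfm}.

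With this equivalence in hand the corollary splits into two implications, each a direct citation. For ``strongly $\partial$-simple $\Rightarrow$ simple and not self-matching'', Proposition \ref{frombdrtoint} supplies simplicity, while the inclusion \eqref{weakvsstrong} shows that strong $\partial$-simplicity implies weak $\partial$-simplicity, hence by the equivalence above that $u$ is not self-matching. For the reverse implication ``simple and not self-matching $\Rightarrow$ strongly $\partial$-simple'', the equivalence converts ``not self-matching'' into ``weakly $\partial$-simple'', whence Proposition \ref{frombdrtoint2} (simple plus weakly $\partial$-simple implies strongly $\partial$-simple) closes the argument.

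The only genuine subtlety, and the step I would treat most carefully, is the converse half of the equivalence between weak $\partial$-simplicity and non-self-matching: upgrading arbitrary disjoint open witnesses $I_1,I_2$ to connected ones lying in $\partial\D\setminus\crit(u\bd)$. A naive deletion of the finitely many critical points from a given $I_1$ is awkward because it alters the image $u(I_1)$, so rather than patch the witnesses by hand I would appeal to the construction internal to Proposition \ref{weakisnotselfm}, which builds the matching arcs from scratch and thereby delivers them already connected and critical-point-free; everything else in the proof is a straightforward invocation of Propositions \ref{frombdrtoint}, \ref{frombdrtoint2}, and \ref{weakisnotselfm}.
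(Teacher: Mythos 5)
Your proof is correct and follows essentially the same route as the paper: the paper likewise assembles the corollary from Proposition \ref{frombdrtoint}, the equivalence of weak and strong $\partial$-simplicity for simple discs (Corollary \ref{frombdrtoint3}, i.e.\ Propositions \ref{frombdrtoint} and \ref{frombdrtoint2}), and the identification of weak $\partial$-simplicity with the not-self-matching property via Proposition \ref{weakisnotselfm}. Your extra care in upgrading arbitrary disjoint open witnesses to connected, critical-point-free arcs (by inspecting the construction inside Proposition \ref{weakisnotselfm}) addresses a definitional mismatch that the paper itself passes over silently, so it is a welcome refinement rather than a deviation.
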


\begin{proof}
  By Proposition \ref{frombdrtoint} any strongly $\partial$-simple holomorphic disc is simple.
  This shows the first half of the 'only if' part.
  For the rest recall that by Corollary \ref{frombdrtoint3}
  for a simple holomorphic disc being weakly or strongly simple along the boundary is the same.
  But by Proposition \ref{weakisnotselfm} weak $\partial$-simplicity is characterised by
  the not self-matching property.
\end{proof}

\begin{proof}[{\bf Proof of Theorem \ref{halfannulusintro}}]
  It is claimed that a holomorphic disc $u$
  has the half-annulus property
  (which is equivalent to the strong simplicity along the boundary by Theorem \ref{halfannulusproperty})
  if and only if $u$ is simple and not self-matching.
  Therefore, the theorem follows from Corollary \ref{onself-matching3} above.
\end{proof}

{\bf Acknowledgment.}
The research presented in this article
is part of my doctoral thesis \cite{zehm08}.
I am grateful to my supervisor Matthias Schwarz for his steady help and encouragement.
I thank Hansj\"org Geiges for comments on the first version of this article.


\end{document}